\theoremstyle{plain}
\newtheorem{theorem}{Theorem}
\newtheorem{lemma}[theorem]{Lemma}
\newtheorem{cor}{Corollary}
\newtheorem{prop}[theorem]{Proposition}
\newtheorem{question}[theorem]{Question}
\newtheorem{sublem}{Sublemma}
\theoremstyle{definition}
\newtheorem{definition}[theorem]{Definition}
\newtheorem{example}[theorem]{Example}
\theoremstyle{definition}
\newtheorem*{remark}{Remark}
\newtheorem*{claim}{Claim}
\newtheorem{construction}{Construction.}
\newtheorem{ack}{Acknowledgment.}
\newcommand{\lrangle}[1]{\langle #1 \rangle}
\newcommand{\res}{\upharpoonright}
\newcommand{\fr}{\mbox{}^\frown}
\newcommand{\touch}[1]{\overset{#1}{\mathbf{\dashrightarrow}}}
\newcommand{\nn}{\mathbb{N}}
\title{Incomputability of Simply Connected Planar Continua}
\author{Takayuki Kihara\thanks{This work was supported by Grant-in-Aid for JSPS fellows.}}
\date{}
\begin{document}
\maketitle

\begin{abstract}
Le Roux and Ziegler asked whether every simply connected compact nonempty planar $\Pi^0_1$ set always contains a computable point.
In this paper, we solve the problem of le Roux and Ziegler by showing that there exists a planar $\Pi^0_1$ dendroid without computable points.
We also provide several pathological examples of tree-like $\Pi^0_1$ continua fulfilling certain global incomputability properties:
there is a computable dendrite which does not $\ast$-include a $\Pi^0_1$ tree;
there is a $\Pi^0_1$ dendrite which does not $\ast$-include a computable dendrite;
there is a computable dendroid which does not $\ast$-include a $\Pi^0_1$ dendrite.
Here, a continuum $A$ {\em $\ast$-includes} a member of a class $\mathcal{P}$ of continua if, for every positive real $\varepsilon$, $A$ includes a continuum $B\in\mathcal{P}$ such that the Hausdorff distance between $A$ and $B$ is smaller than $\varepsilon$.
\end{abstract}

\section{Background}

Every nonempty open set in a computable metric space (such as Euclidean $n$-space $\mathbb{R}^n$) contains a computable point.
In contrast, the Non-Basis Theorem asserts that a nonempty {\em co-c.e.\ closed} set (also called a $\Pi^0_1$ set) in Cantor space (hence, even in Euclidean $1$-space)  can avoid any computable points.
Non-Basis Theorems can shed new light on connections between {\em local} and {\em global} properties by incorporating the notions of {\em measure} and {\em category}.
For instance, Kreisel-Lacombe \cite{KrLa} and Tanaka \cite{Tan} showed that there is a $\Pi^0_1$ set with positive measure that contains no computable point.
Recent exciting progress in {\em Computable Analysis} \cite{Wei} naturally raises the question whether Non-Basis Theorems exist for {\em connected} $\Pi^0_1$ sets.
However, we observe that, if a nonempty $\Pi^0_1$ subset of $\mathbb{R}^1$ contains no computable points, then it must be totally disconnected.
Then, in higher dimensional Euclidean space, can there exist a connected $\Pi^0_1$ set containing no computable points?
It is easy to construct a nonempty connected $\Pi^0_1$ subset of $[0,1]^2$ without computable points, and a nonempty simply connected $\Pi^0_1$ subset of $[0,1]^3$ without computable points.
An open problem, formulated by Le Roux and Ziegler \cite{RZ} was whether every nonempty simply connected compact planar $\Pi^0_1$ set contains a computable point.
As mentioned in Penrose's book ``{\em Emperor's New Mind}'' \cite{Pen}, {\em the Mandelbrot set} is an example of a simply connected compact planar $\Pi^0_1$ set which contains a computable point, and he conjectured that the Mandelbrot set is not computable {\em as a closed set}.
Hertling \cite{Her} observed that the Penrose conjecture has an implication for a famous open problem on local connectivity of the Mandelbrot set.
Our interest is which topological assumption (especially, connectivity assumption) on a $\Pi^0_1$ set can force it to possess a given computability property.
Miller \cite{Mil} showed that every $\Pi^0_1$ sphere in $\mathbb{R}^n$ is computable, and so it contains a dense c.e.\ subset of computable points.
He also showed that every $\Pi^0_1$ ball in $\mathbb{R}^n$ contains a dense subset of computable points.
Iljazovi\'c \cite{Ilj} showed that chainable continua (e.g., arcs) in certain metric spaces are almost computable, and hence there always is a dense subset of computable points.
In this paper, we show that {\em not} every $\Pi^0_1$ dendrite is almost computable, by using a tree-immune $\Pi^0_1$ class in Cantor space.
This notion of immunity was introduced by Cenzer, Weber Wu, and the author \cite{CKWW}.
We also provide pathological examples of tree-like $\Pi^0_1$ continua fulfilling certain global incomputability properties:
there is a computable dendrite which does not $\ast$-include a $\Pi^0_1$ tree;
there is a computable dendroid which does not $\ast$-include a $\Pi^0_1$ dendrite.
Finally, we solve the problem of Le Roux and Ziegler \cite{RZ} by showing that there exists a planar $\Pi^0_1$ dendroid without computable points.
Indeed, our planar dendroid is contractible.
Hence, our dendroid is also the first example of a contractible Euclidean $\Pi^0_1$ set without computable points.

\section{Preliminaries}

\noindent
{\bf Basic Notation:}
$2^{<\nn}$ denotes the set of all finite binary strings.
Let $X$ be a topological space.
For a subset $Y\subseteq X$, $cl(Y)$ ($int(Y)$, resp.) denotes the closure (the interior, resp.) of $Y$.
Let $(X;d)$ be a metric space.
For any $x\in X$ and $r\in\mathbb{R}$, $B(x;r)$ denotes the open ball $B(x;r)=\{y\in X:d(x,y)<r\}$.
Then $x$ is called {\em the center of $B(x;r)$}, and $r$ is called {\em the radius of $B(x;r)$}.
For a given open ball $B=B(x;r)$, $\hat{B}$ denotes the corresponding closed ball $\hat{B}=\{y\in X:d(x,y)\leq r\}$.
For $a,b\in\mathbb{R}$, $[a,b]$ denotes the closed interval $[a,b]=\{x\in\mathbb{R}:a\leq x\leq b\}$, $(a,b)$ denotes the open interval $(a,b)=\{x\in\mathbb{R}:a<x<b\}$, and $\lrangle{a,b}$ denotes a point of Euclidean plane $\mathbb{R}^2$.
For $X\subseteq \mathbb{R}^n$, ${\rm diam}(X)$ denotes $\max\{d(x,y):x,y\in X\}$.

\medskip

\noindent
{\bf Continuum Theory:}
{\em A continuum} is a compact connected metric space.
For basic terminology concerning {\em Continuum Theory}, see Nadler \cite{Nad} and Illanes-Nadler \cite{IlNa}.

Let $X$ be a topological space.
The set $X$ is {\em a Peano continuum} if it is a locally connected continuum.
The set $X$ is {\em a dendrite} if it is a Peano continuum which contains no Jordan curve.
The set $X$ is {\em unicoherent} if $A\cap B$ is connected for every connected closed subsets $A,B\subseteq X$ with $A\cup B=X$.
The set $X$ is {\em hereditarily unicoherent} if every subcontinuum of $X$ is unicoherent.
The set $X$ is {\em a dendroid} if it is an arcwise connected hereditary unicoherent continuum.
For a point $x$ of a dendroid $X$, $r_X(x)$ denotes the cardinality of the set of arc-components of $X\setminus\{x\}$.
If $r_X(x)\geq 3$ then $x$ is said to be {\em a ramification point of $X$}.
The set $X$ is {\em a tree} if it is dendrite with finitely many ramification points.
Note that a topological space $X$ is a dendrite if and only if it is a locally connected dendroid.
Hahn-Mazurkiewicz's Theorem states that a Hausdorff space $X$ is a Peano continuum if and only if $X$ is an image of a continuous curve.

\begin{example}[Planar Dendroids]\label{exa:dend:dend}~
\begin{enumerate}
\item Put $\mathcal{B}_t=\{2^{-t}\}\times[0,2^{-t}]$.
Then the following set $\mathcal{B}\subseteq\mathbb{R}^2$ is dendrite.
\[\mathcal{B}=\bigcup_{t\in\nn}\mathcal{B}_t\cup([-1,1]\times\{0\}).\]
We call $\mathcal{B}$ {\em the basic dendrite}.
The set $\mathcal{B}_{t}$ is called {\em the $t$-th rising of $\mathcal{B}$}.
See Fig.\ \ref{fig:dendrite}.
\item The set $\mathcal{H}=cl((\{1/n:n\in\nn\}\times[0,1])\cup([0,1]\times\{0\}))$ is called {\em a harmonic comb}.
Then $\mathcal{H}$ is a dendroid, but not a dendrite.
The set $\{1/n\}\times[0,1]$ is called {\em the $n$-th rising of the comb $\mathcal{H}$}, and the set $[0,1]\times\{0\}$ is called {\em the grip of $\mathcal{H}$}.
See Fig.\ \ref{fig:Harmonic_comb}.
\item Let $C\subseteq\mathbb{R}^1$ be the middle third Cantor set.
Then the one-point compactification of $C\times(0,1]$ is called {\em the Cantor fan}.
(Equivalently, it is the quotient space ${\rm Cone}(C)=(C\times[0,1])/(C\times\{0\})$.)
The Cantor fan is a dendroid, but not a dendrite.
See Fig.\ \ref{fig:Cantor_fan}.
\end{enumerate}
\end{example}

\begin{figure}[t]\centering
 \begin{minipage}{0.3\hsize}
  \begin{center}
\unitlength 0.1in
\begin{picture}( 12.0000,  6.0000)(  2.0000, -8.0000)
%
\special{pn 8}%
\special{pa 1400 200}%
\special{pa 1400 800}%
\special{fp}%
\special{pa 1400 800}%
\special{pa 800 800}%
\special{fp}%
\special{pa 1100 800}%
\special{pa 1100 500}%
\special{fp}%
\special{pa 950 800}%
\special{pa 950 650}%
\special{fp}%
\special{pa 876 800}%
\special{pa 876 726}%
\special{fp}%
\special{pa 838 800}%
\special{pa 838 762}%
\special{fp}%
\special{pa 820 800}%
\special{pa 820 780}%
\special{fp}%
%
\special{pn 8}%
\special{sh 0.300}%
\special{pa 800 800}%
\special{pa 820 800}%
\special{pa 820 780}%
\special{pa 800 800}%
\special{pa 800 800}%
\special{pa 800 800}%
\special{ip}%
%
\special{pn 8}%
\special{pa 800 800}%
\special{pa 200 800}%
\special{fp}%
\end{picture}%
  \end{center}
 \vspace{-0.5em}
\caption{The basic dendrite}
  \label{fig:dendrite}
 \end{minipage}
 \begin{minipage}{0.35\hsize}
  \begin{center}
\unitlength 0.1in
\begin{picture}( 12.0000,  6.0000)(  2.0000, -8.0000)
%
\special{pn 8}%
\special{pa 1400 800}%
\special{pa 200 800}%
\special{fp}%
\special{pa 1400 800}%
\special{pa 1400 200}%
\special{fp}%
\special{pa 200 800}%
\special{pa 200 200}%
\special{fp}%
\special{pa 800 800}%
\special{pa 800 200}%
\special{fp}%
\special{pa 500 800}%
\special{pa 500 200}%
\special{fp}%
\special{pa 350 800}%
\special{pa 350 200}%
\special{fp}%
\special{pa 276 800}%
\special{pa 276 200}%
\special{fp}%
%
\special{pn 8}%
\special{pa 240 800}%
\special{pa 240 200}%
\special{fp}%
%
\special{pn 8}%
\special{sh 0.300}%
\special{pa 226 800}%
\special{pa 200 800}%
\special{pa 200 200}%
\special{pa 226 200}%
\special{pa 226 800}%
\special{ip}%
\end{picture}%
  \end{center}
\caption{The harmonic comb}
  \label{fig:Harmonic_comb}
 \end{minipage}
 \begin{minipage}{0.3\hsize}
  \begin{center}
\unitlength 0.1in
\begin{picture}( 12.0000, 10.1300)(  4.0000,-12.0300)
%
\special{pn 8}%
\special{sh 1}%
\special{ar 400 600 10 10 0  6.28318530717959E+0000}%
\special{sh 1}%
\special{ar 500 600 10 10 0  6.28318530717959E+0000}%
\special{sh 1}%
\special{ar 700 600 10 10 0  6.28318530717959E+0000}%
\special{sh 1}%
\special{ar 800 600 10 10 0  6.28318530717959E+0000}%
\special{sh 1}%
\special{ar 1200 600 10 10 0  6.28318530717959E+0000}%
\special{sh 1}%
\special{ar 1300 600 10 10 0  6.28318530717959E+0000}%
\special{sh 1}%
\special{ar 1500 600 10 10 0  6.28318530717959E+0000}%
\special{sh 1}%
\special{ar 1600 600 10 10 0  6.28318530717959E+0000}%
%
\special{pn 8}%
\special{pa 400 600}%
\special{pa 1000 1200}%
\special{fp}%
\special{pa 1600 600}%
\special{pa 1000 1200}%
\special{fp}%
\special{pa 500 600}%
\special{pa 1000 1200}%
\special{fp}%
\special{pa 700 600}%
\special{pa 1000 1200}%
\special{fp}%
\special{pa 800 600}%
\special{pa 1000 1200}%
\special{fp}%
\special{pa 1200 600}%
\special{pa 1000 1200}%
\special{fp}%
\special{pa 1300 600}%
\special{pa 1000 1200}%
\special{fp}%
\special{pa 1500 600}%
\special{pa 1000 1200}%
\special{fp}%
%
\special{pn 8}%
\special{sh 1}%
\special{ar 1000 1200 10 10 0  6.28318530717959E+0000}%
%
\special{pn 8}%
\special{pa 400 500}%
\special{pa 400 400}%
\special{dt 0.045}%
\special{pa 1600 400}%
\special{pa 1600 500}%
\special{dt 0.045}%
\special{pa 1600 400}%
\special{pa 400 400}%
\special{dt 0.045}%
\put(6.0000,-3.6000){\makebox(0,0)[lb]{Cantor set}}%
\end{picture}%
  \end{center}
\vspace{-1.5em}
\caption{The Cantor fan}
  \label{fig:Cantor_fan}
 \end{minipage}
\end{figure}

Let $X$ be a topological space.
$X$ is {\em $n$-connected} if it is path-connected and $\pi_i(X)\equiv 0$ for any $1\leq i\leq n$, where $\pi_i(X)$ is the $i$-th homotopy group of $X$.
$X$ is {\em simply connected} if $X$ is 1-connected.
$X$ is {\em contractible} if the identity map on $X$ is null-homotopic.
Note that, if $X$ is contractible, then $X$ is $n$-connected for each $n\geq 1$.
It is easy to see that the dendroids in Example \ref{exa:dend:dend} are contractible.

\medskip

\noindent
{\bf Computability Theory:}
We assume that the reader is familiar with Computability Theory on the natural numbers $\mathbb{N}$, Cantor space $2^\mathbb{N}$, and Baire space $\mathbb{N}^\mathbb{N}$ (see also Soare \cite{Soa}).
For basic terminology concerning {\em Computable Analysis}, see Weihrauch \cite{Wei}, Brattka-Weihrauch \cite{BW}, and Brattka-Presser \cite{BP}.

Hereafter, we fix a countable base for the Euclidean $n$-space $\mathbb{R}^n$ by $\rho=\{B(x;r):x\in\mathbb{Q}^n\;\&\;r\in\mathbb{Q}^+\}$, where $\mathbb{Q}^+$ denotes the set of all positive rationals.
Let $\{\rho_n\}_{n\in\nn}$ be an effective enumeration of $\rho$.
We say that a point $x\in\mathbb{R}^n$ is {\em computable} if the code of its principal filter $\mathcal{F}(x)=\{i\in\mathbb{N}:x\in\rho_i\}$ is computably enumerable (hereafter c.e.)
A closed subset $F\subseteq\mathbb{R}^n$ is $\Pi^0_1$ if there is a c.e.\ set $W\subseteq\nn$ such that $F=X\setminus\bigcup_{e\in W}\rho_e$.
A closed subset $F\subseteq\mathbb{R}^n$ is {\em computably enumerable} (hereafter {\em c.e.}) if $\{e\in\nn:F\cap\rho_e\not=\emptyset\}$ is c.e.
A closed subset $F\subseteq\mathbb{R}^n$ is {\em computable} if it is $\Pi^0_1$ and c.e.\ on $\mathbb{R}^n$.

\medskip

\noindent
{\bf Almost Computability:}
Let $A_0,A_1$ be nonempty closed subsets of a metric space $(X,d)$.
Then {\em the Hausdorff distance} between $A_0$ and $A_1$ is defined by
\[d_H(A_0,A_1)=\max_{i<2}\sup_{x\in A_i}\inf_{y\in A_{1-i}}d(x,y).\]
Let $\mathcal{P}$ be a class of continua.
We say that a continuum {\em $A$ $\ast$-includes a member of $\mathcal{P}$} if $\inf\{d_H(A,B):A\supseteq B\in\mathcal{P}\}=0$.

\begin{prop}\label{prop:astinclude}
Every Euclidean dendroid $\ast$-includes a tree.
\end{prop}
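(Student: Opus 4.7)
The plan is to approximate $A$ by the union $T$ of the pairwise arcs between the points of a fine finite net in $A$, and then to show via the dendroid structure that $T$ is automatically a tree. Given $\varepsilon>0$, compactness of the dendroid $A$ furnishes a finite set $F=\{x_1,\ldots,x_n\}\subseteq A$ which is $\varepsilon$-dense in $A$. In a dendroid, any two points $p,q$ are joined by a unique arc, which I denote $pq$: existence follows from arcwise connectedness, and uniqueness follows from hereditary unicoherence, since if $\alpha,\alpha'$ are both arcs from $p$ to $q$, then $\alpha\cup\alpha'$ is a subcontinuum, so $\alpha\cap\alpha'$ is connected, contains both $p$ and $q$, and hence equals each of $\alpha,\alpha'$. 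I then define
\[T=\bigcup_{1\le i<j\le n} x_ix_j.\]
Since $F\subseteq T\subseteq A$, the $\varepsilon$-density of $F$ immediately yields $d_H(A,T)<\varepsilon$.

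The core of the argument is to show that $T$ is a tree, which I would do by induction on $n$. The cases $n\le 2$ are immediate. Assume $T_n=\bigcup_{i<j\le n} x_ix_j$ is a tree, and set $\alpha=x_1x_{n+1}$. Since $T_n$ and $\alpha$ are subcontinua of $A$ sharing the point $x_1$, their union $T_n\cup\alpha$ is a subcontinuum of $A$, and hereditary unicoherence forces $\alpha\cap T_n$ to be connected, hence a subarc of $\alpha$ with $x_1$ as an endpoint; let $y$ be its other endpoint and write $\beta$ for the complementary subarc of $\alpha$ from $y$ to $x_{n+1}$, so that $\beta\cap T_n=\{y\}$. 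For any $i\le n$, concatenating the unique arc in $T_n$ from $x_i$ to $y$ with $\beta$ yields an injective path in $A$ from $x_i$ to $x_{n+1}$, which by uniqueness of arcs is exactly $x_ix_{n+1}$; hence every such arc lies in $T_n\cup\beta$, and therefore $T_{n+1}=T_n\cup\beta$. This realizes $T_{n+1}$ as the tree $T_n$ with a single free arc $\beta$ attached at the point $y$; such an attachment preserves local connectedness, produces no Jordan curve, and introduces at most one new ramification point, so $T_{n+1}$ is again a tree.

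The main obstacle is precisely this inductive step: one must verify that all the arcs $x_ix_{n+1}$ enter $T_n$ through a single point $y$, so that $T$ grows stage by stage by attaching only one free arc. This is where hereditary unicoherence plays its critical double role --- it ensures both that $\alpha\cap T_n$ is a clean subarc (not a disconnected set that would force loops or extra branches) and that arcs with given endpoints are unique (so the concatenation of the $T_n$-arc with $\beta$ really is $x_ix_{n+1}$). Without these two features, $T$ could in principle develop Jordan curves or infinitely many ramification points and fail to be a tree.
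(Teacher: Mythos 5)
Your argument is essentially the paper's: fix a finite $\varepsilon$-net in the dendroid, form the union $T$ of the unique pairwise arcs, observe $d_H(A,T)<\varepsilon$, and show $T$ is a tree. The only divergence is in establishing that $T$ is a tree. The paper argues top-down: a finite union of arcs inside a (hereditarily unicoherent) continuum is a finite graph in the sense of continuum theory, hence a Peano continuum, and any Jordan curve in $T$ would be a Jordan curve in the dendroid $A$, which is impossible. You instead run an induction on the number of net points, using hereditary unicoherence to show that each new arc $x_1x_{n+1}$ meets the previously built tree $T_n$ in a subarc containing $x_1$, so that $T_{n+1}$ is obtained from $T_n$ by attaching a single free arc at one point $y$; this preserves local connectedness, introduces no Jordan curve, and adds at most one ramification point. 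Both justifications are correct; yours is more self-contained and makes explicit the role of hereditary unicoherence both in uniqueness of arcs and in controlling how arcs overlap, while the paper's is shorter at the cost of appealing to standard facts about graphs in continuum theory.
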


\begin{proof}\upshape
Fix a Euclidean dendroid $D\subseteq\mathbb{R}^n$, and a positive rational $\varepsilon\in\mathbb{Q}$.
Then $D$ is covered by finitely many open rational balls $\{B_i\}_{i<n}$ of radius $\varepsilon/2$.
Choose $d_i\in D\cap B_i$ for each $i<n$ if $B_i$ intersects with $D$.
Note that $\{B(d_i;\varepsilon)\}_{i<n}$ covers $D$.
Since $D$ is dendroid, there is a unique arc $\gamma_{i,j}\subseteq D$ connecting $d_i$ and $d_j$ for each $i,j<n$.
Then, $E=\bigcup_{\{i,j\}\subseteq n}\gamma_{i,j}$ is connected and locally connected, since $E$ is a union of finitely many arcs (i.e., it is a graph, in the sense of Continuum Theory; see also Nadler \cite{Nad}).
It is easy to see that $E$ has no Jordan curve, since $E$ is a subset of the dendroid $D$.
Consequently, $E$ is a tree.
Moreover, clearly $d_H(E,D)<\varepsilon$, since $d_i\in E$ for each $i<n$.
\end{proof}

The class $\mathcal{P}$ has {\em the almost computability property} if every $A\in\mathcal{P}$ $\ast$-includes a computable member of $\mathcal{P}$ as a closed set.
In this case, we simply say that {\em every $A\in\mathcal{P}$ is almost computable}.
Iljazovi\'c \cite{Ilj} showed that every $\Pi^0_1$ chainable continuum is almost computable, hence every $\Pi^0_1$ arc is almost computable.

\section{Incomputability of Dendrites}

By Proposition \ref{prop:astinclude}, topologically, every planar dendrite $\ast$-includes a tree.
However, if we try to effectivize this fact, we will find a counterexample.

\begin{theorem}\label{thm:main:dendrite1}
Not every computable planar dendrite $\ast$-includes a $\Pi^0_1$ tree.
\end{theorem}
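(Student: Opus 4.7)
The strategy is to encode a tree-immune $\Pi^0_1$ class of Cantor space from \cite{CKWW} into a computable planar dendrite $D$, so that any $\Pi^0_1$ tree that $\varepsilon$-approximates $D$ in Hausdorff distance would effectively reveal a subtree of that class, contradicting tree-immunity.

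I would fix a tree-immune $\Pi^0_1$ class $P \subseteq 2^{\mathbb{N}}$ from \cite{CKWW}, together with a stage-wise computable approximation of its co-c.e.\ tree $T_P \subseteq 2^{<\mathbb{N}}$. The dendrite $D$ is built as a fractal variant of the basic dendrite $\mathcal{B}$ of Example \ref{exa:dend:dend}: attached to a base arc is a system of features indexed by $2^{<\mathbb{N}}$, where the feature at node $\sigma$ lives at scale $2^{-|\sigma|}$ and is associated with a rational open ball $U_\sigma$ chosen as a geometric ``witness'' for that node. The feature at $\sigma$ is maintained at full scale while $\sigma$ survives in the approximation of $T_P$, and is shrunk below $U_\sigma$ once $\sigma$ is enumerated out. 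A careful scheduling of the shrink stages ensures that $D$ is a computable closed set (both c.e.\ and co-c.e.\ information available in the limit), is locally connected, and contains no Jordan curve---hence is a planar dendrite.

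The key lemma, of a geometric nature, is then: there exists $\varepsilon_0 > 0$ such that every $\Pi^0_1$ tree $T \subseteq D$ with $d_H(T,D) < \varepsilon_0 \cdot 2^{-n}$ must intersect $U_\sigma$ for every $\sigma \in T_P$ of length at most $n$. Using the fact that $T$ is a finite union of arcs (not merely a $\Pi^0_1$ set) together with the hereditary unicoherence of $D$, this intersection pattern can be translated, uniformly in an index for $T$, into an effective specification of a subtree of $T_P$ that grows without bound as $n$ increases, contradicting tree-immunity.

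The principal obstacle is the translation step from geometry to combinatorics. A $\Pi^0_1$ set $T$ a priori furnishes only co-c.e.\ information about its complement, whereas the extracted subtree requires c.e.\ information about which $U_\sigma$ actually \emph{meet} $T$. The tree hypothesis is essential here: because $T$ has only finitely many ramification points, the question ``does $T$ meet $U_\sigma$?'' can be replaced by arc-tracing inside $D$ and localised in finite-stage approximations. A secondary technical point is verifying that the stage-wise truncation of features preserves both sides of the computability of $D$ as a closed set; the shrink schedule must be chosen conservatively enough that enumeration of open balls missing $D$ remains correct in the limit.
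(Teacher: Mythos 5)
Your proposal reuses the tree-immunity machinery that the paper actually deploys in the proof of Theorem \ref{thm:rite_alcom} (a $\Pi^0_1$ dendrite not $\ast$-including a \emph{computable} dendrite), not in the proof of the present Theorem \ref{thm:main:dendrite1}, and that transplant does not go through. The crux of the difficulty is the information-flow direction: a $\Pi^0_1$ set $T$ furnishes, effectively, \emph{which} rational balls $T$ \emph{misses} (so $\{\sigma : U_\sigma \cap T = \emptyset\}$ is c.e.), whereas your translation step needs to certify, effectively, which balls $T$ \emph{meets}. You acknowledge this and propose to overcome it by ``arc-tracing'' and ``localisation in finite-stage approximations,'' but no mechanism is given: a $\Pi^0_1$ subtree of $D$ can sit inside approximations of $D$ for arbitrarily long before receding from a given $U_\sigma$, and nothing in the tree hypothesis alone converts the semi-decision of absence into a semi-decision of presence. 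In the paper's Lemma \ref{lem:rite:3}, where a subtree of $T_P$ really is extracted, the hypothesis is that the subdendrite is \emph{computable}, so that both the c.e.\ witness set $T^\ast$ and the $\Pi^0_1$ witness set $U^\ast$ are available and can be separated by $\Sigma^0_1$-reduction; you only have $U^\ast$.

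There is a second structural issue. Even granting your key lemma, the information you would read off from a single $\Pi^0_1$ tree $T$ with $d_H(T,D)<\varepsilon$ is only about levels $\leq n_\varepsilon$, hence finite; to ``grow without bound'' you would need a family of trees $T_\varepsilon$ approximating $D$ ever more closely, and these are unrelated objects giving you no single infinite computable subtree of $T_P$. Tree-immunity is therefore not the right invariant here, and indeed the paper does not use it for this theorem. The paper's proof is a direct diagonalization against an incomputable c.e.\ set $A$: risings are thickened according to the enumeration of $A$, so that a rising with $w(t)>0$ disconnects $D$ if avoided and a rising with $w(t)=0$ carries a ramification point $\langle 2^{-t},0\rangle$. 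Any $\Pi^0_1$ tree $T\subseteq D$ has finitely many ramification points, so the c.e.\ set $B=\{t:\hat B_t\cap T=\emptyset\}$ must contain almost all of $\mathbb{N}\setminus A$; and if $B$ contains infinitely many elements of $A$ then the cutting property forces $d_H(T,D)\geq 1$. The remaining case gives $(\mathbb{N}\setminus A)\triangle B$ finite, hence $\mathbb{N}\setminus A$ c.e., a contradiction. This argument stays entirely on the c.e.\ side of the information and does not need any immunity notion, which is exactly why it succeeds for a merely $\Pi^0_1$ tree.
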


\begin{proof}\upshape
Let $A\subseteq\mathbb{N}$ be an incomputable c.e.\ set.
Thus, there is a total computable function $f_A:\mathbb{N}\to\mathbb{N}$ such that ${\rm range}(f_A)=A$.
We may assume $f_A(s)\leq s$ for every $s\in\mathbb{N}$.
Let $A_s$ denote the finite set $\{f_A(u):u\leq s\}$.
Then ${\rm st}^A:\nn\to\nn$ is defined as ${\rm st}^A(n)=\min\{s\in\nn:n\in A_s\}$.
Note that ${\rm st}^A(n)\geq n$ by our assumption $f_A(s)\leq s$.

\begin{construction}\upshape
Recall the definition of the basic dendrite from Example \ref{exa:dend:dend}.
We construct a computable dendrite by modifying the basic dendrite $\mathcal{B}$.
For every $t\in\mathbb{N}$, we introduce {\em the width of the $t$-rising} $w(t)$ as follows:
\[
w(t)=
\begin{cases}
2^{-(2+{\rm st}^A(t))}, & \mbox{ if } t\in A,\\
0, & \mbox{ otherwise.}
\end{cases}
\]

Let $I_t$ be the closed interval $[2^{-t}-w(t),2^t+w(t)]$.
Since ${\rm st}^A(n)\geq n$, we have $I_t\cap I_s=\emptyset$ whenever $t\not=s$.
We observe that $\{w(t)\}_{t\in\nn}$ is a uniformly computable sequence of real numbers.
Now we define a computable dendrite $D\subseteq\mathbb{R}^2$ by:
\begin{align*}
D^0_t&=(\{2^{-t}-w(t)\}\cup\{2^{-t}+w(t)\})\times [0,2^{-t}]\\
D^1_t&=[2^{-t}-w(t),2^{-t}+w(t)]\times\{2^{-t}\}\\
D^2_t&=(2^{-t}-w(t),2^{-t}+w(t))\times(-1,2^{-t})\\
D&=\Big(\bigcup_{t\in\mathbb{N}}(D^0_{t}\cup D^1_{t})\Big)\cup\Big(([-1,1]\times \{0\})\setminus\bigcup_{t\in\mathbb{N}}D^2_{t,m}\Big).
\end{align*}

We call $D_t=D^0_t\cup D^1_t$ {\em the $t$-th rising of $D$}.
See Fig.\ \ref{fig:Basic_dendrite2}.
\end{construction}

\begin{figure}[t]\centering
  \begin{center}
\unitlength 0.1in
\begin{picture}( 36.0000, 12.3300)( 14.0000,-14.0300)
%
\special{pn 8}%
\special{sh 0.600}%
\special{pa 2600 1400}%
\special{pa 2676 1400}%
\special{pa 2676 1360}%
\special{pa 2600 1400}%
\special{pa 2600 1400}%
\special{pa 2600 1400}%
\special{ip}%
%
\special{pn 8}%
\special{pa 2600 1400}%
\special{pa 1400 1400}%
\special{fp}%
\put(48.0000,-4.0000){\makebox(0,0)[lb]{$D_0$}}%
%
\special{pn 8}%
\special{pa 2600 1400}%
\special{pa 2860 1400}%
\special{fp}%
\special{pa 2860 1250}%
\special{pa 2940 1250}%
\special{fp}%
\special{pa 2940 1400}%
\special{pa 3200 1400}%
\special{fp}%
\special{pa 2940 1400}%
\special{pa 2940 1250}%
\special{fp}%
\special{pa 2860 1400}%
\special{pa 2860 1250}%
\special{fp}%
\special{pa 3200 1400}%
\special{pa 3200 1100}%
\special{fp}%
\special{pa 3200 1400}%
\special{pa 3600 1400}%
\special{fp}%
\special{pa 3600 1400}%
\special{pa 3600 800}%
\special{fp}%
\special{pa 3600 800}%
\special{pa 4000 800}%
\special{fp}%
\special{pa 4000 800}%
\special{pa 4000 1400}%
\special{fp}%
\special{pa 4000 1400}%
\special{pa 5000 1400}%
\special{fp}%
\special{pa 5000 1400}%
\special{pa 5000 200}%
\special{fp}%
\special{pa 2750 1400}%
\special{pa 2750 1320}%
\special{fp}%
\put(35.8000,-7.2000){\makebox(0,0)[lb]{$D_1$}}%
%
\special{pn 13}%
\special{sh 1}%
\special{ar 2600 1400 10 10 0  6.28318530717959E+0000}%
\put(25.7000,-15.3000){\makebox(0,0)[lb]{$0$}}%
\put(31.2000,-15.5000){\makebox(0,0)[lb]{$1/4$}}%
\put(49.7000,-15.3000){\makebox(0,0)[lb]{$1$}}%
\put(37.2000,-15.5000){\makebox(0,0)[lb]{$1/2$}}%
\put(30.8000,-10.2000){\makebox(0,0)[lb]{$D_2$}}%
%
\special{pn 8}%
\special{pa 3800 1400}%
\special{pa 3800 400}%
\special{dt 0.045}%
\special{pa 4000 800}%
\special{pa 4000 400}%
\special{dt 0.045}%
%
\special{pn 8}%
\special{pa 3900 400}%
\special{pa 3800 400}%
\special{fp}%
\special{sh 1}%
\special{pa 3800 400}%
\special{pa 3868 420}%
\special{pa 3854 400}%
\special{pa 3868 380}%
\special{pa 3800 400}%
\special{fp}%
\special{pa 3900 400}%
\special{pa 4000 400}%
\special{fp}%
\special{sh 1}%
\special{pa 4000 400}%
\special{pa 3934 380}%
\special{pa 3948 400}%
\special{pa 3934 420}%
\special{pa 4000 400}%
\special{fp}%
\put(37.9000,-3.4000){\makebox(0,0)[lb]{$w(1)$}}%
\end{picture}%
  \end{center}
\caption{The dendrite $D$ for $0,2,4\not\in A$ and $1,3\in A$.}
  \label{fig:Basic_dendrite2}
\end{figure}
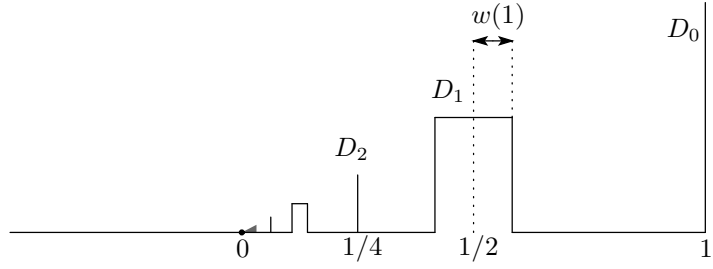

\begin{claim}
The set $D$ is a dendrite.
\end{claim}

To prove $D$ is a Peano continuum, by the Hahn-Mazurkiewicz Theorem, it suffices to show that $D={\rm Im}(h)$ for some continuous curve $h:[-1,1]\to\mathbb{R}^2$.
We divide the unit interval $[0,1]$ into infinitely many parts $I_t=[2^{-(t+1)},2^{-t}]$.
Furthermore, we also divide each interval $I_{2t}$ into three parts $I^0_{2t}$, $I^1_{2t}$, and $I^2_{2t}$, where $I^i_{2t}=[(5-i)\cdot 3^{-1}\cdot 2^{-(2t+1)},(6-i)\cdot 3^{-1}\cdot 2^{-(2t+1)}]$ for each $i<3$.
Then we define a desired curve $h$ as follows.
\[
h(x)\mbox{ moves in }
\begin{cases}
\{2^{-t}+w(t)\}\times[0,2^{-t}]&\mbox{if }x\in I^0_{2t},\\
[2^{-t}-w(t),2^{-t}+w(t)]\times\{2^{-t}\}&\mbox{if }x\in I^1_{2t},\\
\{2^{-t}-w(t)\}\times[0,2^{-t}]&\mbox{if }x\in I^2_{2t},\\
[2^{-(t+1)}+w(t+1),2^{-t}-w(t)]\times\{0\}&\mbox{if }x\in I_{2t+1},\\
[-1,0]\times\{0\}&\mbox{if }x\in [-1,0].
\end{cases}
\]

Clearly, $h$ can be continuous, and indeed computable, since the map $w:\mathbb{R}\to\mathbb{R}$ is computable.
It is easy to see that $D={\rm Im}(h)$.
Moreover, ${\rm Im}(h)$ contains no Jordan curve since $\pi_0(h(x))\leq\pi_0(h(y))$ whenever $x\leq y$, where $\pi_0(p)$ denotes the first coordinate of $p\in\mathbb{R}^2$.
Consequently, $D$ is a dendrite.

\medskip

Moreover, by construction, it is easy to see that $D$ is computable.

\begin{claim}
The computable dendrite $D$ does not $\ast$-include a $\Pi^0_1$ tree.
\end{claim}

Suppose that $D$ contains a $\Pi^0_1$ subtree $T\subseteq D$.
We consider a rational open ball $B_t$ with center $\lrangle{2^{-t},2^{-t}}$ and radius $2^{-(t+2)}$, for each $t\in\nn$.
Note that $B_t\cap D\subseteq D_t$ for every $t\in\nn$.
Since $T$ is $\Pi^0_1$ in $\mathbb{R}^2$, $B=\{t\in\mathbb{N}:\hat{B}_{t}\cap T=\emptyset\}$ is c.e.
If $w(t)>0$ (i.e., $t\in A$) then $D\setminus(D_t\cap B_t)$ is disconnected.
Therefore, either $T\subseteq [-1,2^{-t}]\times\mathbb{R}$ or $T\subseteq[2^{-t},1]\times\mathbb{R}$ holds whenever $\hat{B}_{t}\cap T=\emptyset$ (i.e., $t\in B$), since $T$ is connected.
Thus, if the condition $\#(A\cap B)=\aleph_0$ is satisfied, then either $T\subseteq [-1,0]\times\mathbb{R}$ or $T\subseteq [0,1]\times\mathbb{R}$ holds.
Consequently, we must have $d_H(T,D)\geq 1$.

Therefore, we may assume $\#A\cap B<\aleph_0$.
Since $A$ is coinfinite, $D$ has infinitely many ramification points $\lrangle{2^{-t},0}$ for $t\not\in A$.
However, by the definition of tree, $T$ has only finitely many ramification points.
Thus we must have $(D^0_{t}\cap T)\setminus \{\lrangle{2^{-t},0}\}=\emptyset$ for almost all $t\not\in A$.
Since $\hat{B}_{t}\cap T\subseteq (D^0_{t}\cap T)\setminus \{\lrangle{2^{-t},0}\}$, we have $t\in B$ for almost all $t\in\nn\setminus A$.
Consequently, we have $\#((\mathbb{N}\setminus A)\triangle B)<\aleph_0$.
This implies that $\mathbb{N}\setminus A$ is also c.e., since $B$ is c.e.
This contradicts that $A$ is incomputable.
\end{proof}

Note that a Hausdorff space (hence every metric space) is (locally) arcwise connected if and only if it is (locally) pathwise connected.
However, Miller \cite{Mil} pointed out that the effective versions of arcwise connectivity and pathwise connectivity do {\em not} coincide.
Theorem \ref{thm:main:dendrite1} could give a result on effective connectivity properties.
Note that {\em effectively pathwise connectivity} is defined by Brattka \cite{Bra08}.
Clearly, the dendrite $D$ is effectively pathwise connected.
We now introduce a new effective version of arcwise connectivity property by thinking arcs as closed sets.
Let $\mathcal{A}_-(X)$ denote the hyperspace of closed subsets of $X$ with negative information (see also Brattka \cite{Bra08}).
\begin{definition}
A computable metric space $(X,d,\alpha)$ is {\em semi-effectively arcwise connected} if there exists a total computable multi-valued function $P:X^2\rightrightarrows\mathcal{A}_-(X)$ such that $P(x,y)$ is the set of all arcs $A$ whose two end points are $x$ and $y$, for any $x,y\in X$.
\end{definition}
Obviously $D$ is not semi-effectively arcwise connected.
Indeed, for every $\varepsilon>0$ there exists $x_0,x_1\in [0,1]$ with $d(x_0,x_1)<\varepsilon$ such that $\lrangle{x_0,0},\lrangle{x_1,0}\in D$ cannot be connected by any $\Pi^0_1$ arc.
Thus, we have the following corollary.

\begin{cor}
There exists an effectively pathwise connected Euclidean continuum $D$ such that $D$ is not semi-effectively arcwise connected.
\end{cor}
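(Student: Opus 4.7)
The witness will be the computable dendrite $D$ from the proof of Theorem \ref{thm:main:dendrite1}. The plan is to check its two effective-connectivity properties in turn.

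For effective pathwise connectedness, I would exploit the computable continuous surjection $h:[-1,1]\to D$ already exhibited in that proof. Equivalently, the structure of $D$ makes the origin $\lrangle{0,0}$ act as a hub from which every point is reachable by a canonical, uniformly computable path (descend the unique rising the point sits on, if any, then slide along the base to $\lrangle{0,0}$); concatenating two such half-paths produces a computable path between any given pair of inputs, witnessing effective pathwise connectedness in the sense of Brattka \cite{Bra08}.

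For the failure of semi-effective arcwise connectedness, I would argue by contradiction by encoding $\nn\setminus A$ into the arc-choice function. Suppose such a $P$ exists, and for each $t\in\nn$ set $x_0^t=\lrangle{2^{-t}-2^{-(t+2)},0}$ and $x_1^t=\lrangle{2^{-t}+2^{-(t+2)},0}$. The bound $w(t)\leq 2^{-(t+2)}$, which follows from ${\rm st}^A(t)\geq t$, guarantees that both endpoints always lie in $D$, just outside any potential gap around $2^{-t}$. Because $D$ is a dendroid, $\gamma_t:=P(x_0^t,x_1^t)$ is the unique arc in $D$ joining these endpoints, and since $P$ outputs each $\gamma_t$ as an element of $\mathcal{A}_-(D)$, the family $\{\gamma_t\}_{t\in\nn}$ is uniformly $\Pi^0_1$ in $t$.

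Now take the closed rational ball $\hat B_t$ of radius $2^{-(t+2)}$ centered at $\lrangle{2^{-t},2^{-t}}$, which lies well above the base. If $t\notin A$ then $w(t)=0$, the base is unbroken at $2^{-t}$, and $\gamma_t$ is the horizontal segment from $x_0^t$ to $x_1^t$, hence disjoint from $\hat B_t$ (every such point is at distance at least $2^{-t}>2^{-(t+2)}$ from the center). If $t\in A$ then the base has a genuine gap at $2^{-t}$, forcing $\gamma_t$ to ascend the left side of $D_t$, traverse the top $D^1_t$, and descend the right side; in particular $\gamma_t$ passes through the midpoint $\lrangle{2^{-t},2^{-t}}$ of $D^1_t$, which is the center of $\hat B_t$. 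Thus $\{t:\hat B_t\cap\gamma_t=\emptyset\}=\nn\setminus A$, which would be \ce, contradicting the incomputability of $A$.

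The main obstacle is the uniform choice of the test endpoints: the offset $2^{-(t+2)}$ must simultaneously be large enough that $x_0^t,x_1^t$ lie in $D$ despite the a priori unknown value of $w(t)$, and small enough that the test ball $\hat B_t$ is cleanly disjoint from the horizontal base segment between $x_0^t$ and $x_1^t$ when $t\notin A$. The inequality ${\rm st}^A(t)\geq t$ built into the construction of $D$ is precisely what reconciles these two requirements.
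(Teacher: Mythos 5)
Your proof is correct and follows essentially the same route as the paper: it reuses the dendrite $D$ and the test balls $\hat B_t$ from the proof of Theorem~\ref{thm:main:dendrite1}, and reduces the incomputability of $A$ to the hypothetical arc-selection map $P$. The paper itself only gestures at this (``obviously $D$ is not semi-effectively arcwise connected\dots cannot be connected by any $\Pi^0_1$ arc''); your version makes the necessary point explicit, namely that the obstruction is uniformity — each individual arc $\gamma_t$ in $D$ is in fact a computable closed set, and it is only the uniform $\Pi^0_1$-ness of the family $\{\gamma_t\}_t$, forced by computability of $P$ applied to the uniformly computable endpoint pairs $(x_0^t,x_1^t)$, that yields $\nn\setminus A$ c.e.\ and the contradiction. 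The endpoint choice $2^{-t}\pm 2^{-(t+2)}$ and the verification via ${\rm st}^A(t)\ge t$ that $w(t)\le 2^{-(t+2)}$ are exactly right.
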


\begin{theorem}\label{thm:rite_alcom}
Not every $\Pi^0_1$ planar dendrite is almost computable.
\end{theorem}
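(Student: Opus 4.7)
My plan is to follow the schema of Theorem~\ref{thm:main:dendrite1}: construct a $\Pi^0_1$ planar dendrite $D$ whose closeness to a computable subdendrite would yield a c.e.~combinatorial object that contradicts a strong incomputability hypothesis. Here that object is an infinite c.e.~subtree of the canonical tree $T_P = \{\sigma\in 2^{<\mathbb{N}} : [\sigma]\cap P\neq\emptyset\}$ of a \emph{tree-immune} $\Pi^0_1$ class $P\subseteq 2^\mathbb{N}$ provided by \cite{CKWW}; note that $T_P$ is co-c.e.

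The construction I have in mind attaches to a modification of the basic dendrite $\mathcal{B}$ an indexed family of small ``markers'' $\{M_\sigma\}_{\sigma \in 2^{<\mathbb{N}}}$ arranged in a nested binary-tree pattern, where $M_{\sigma i}$ ($i\in\{0,1\}$) is joined to $M_\sigma$ at a single point and $\mathrm{diam}(M_\sigma)$ decreases rapidly in $|\sigma|$ so that any infinite chain of nested markers accumulates onto the base arc $[-1,1]\times\{0\}$. Each $M_\sigma$ is deleted at the stage when $\sigma$ is enumerated out of $T_P$, so that $D=\bigcap_s D_s$ is $\Pi^0_1$ as a decreasing intersection of computable dendrites. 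Verifying that $D$ is itself a planar dendrite reduces to checking local connectivity (thanks to the markers shrinking onto the base) and the absence of Jordan curves (since each marker meets the rest of $D$ at a single point, preserving the tree structure).

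For non-almost-computability: if $D$ were almost computable, one would have, for each rational $\varepsilon>0$, a computable dendrite $E_\varepsilon\subseteq D$ with $d_H(D,E_\varepsilon)<\varepsilon$. For each $\sigma$ with $\mathrm{diam}(M_\sigma)>3\varepsilon$, Hausdorff-closeness forces $E_\varepsilon$ to meet a fixed small witness ball $B_\sigma$ deep inside $M_\sigma$, so $T^\ast_\varepsilon=\{\sigma : E_\varepsilon \cap B_\sigma \neq \emptyset\}$ is c.e.~and contained in $T_P$. The nested geometry forces $T^\ast_\varepsilon$ to be prefix-closed: any point of $E_\varepsilon$ inside $M_\sigma$ can be connected, within $D$, to the rest of $E_\varepsilon$ only by way of each ancestor marker $M_\tau$ ($\tau\subsetneq \sigma$), and connectedness of $E_\varepsilon$ therefore forces $\tau\in T^\ast_\varepsilon$. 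Amalgamating the $T^\ast_\varepsilon$ over $\varepsilon\to 0$ (with care about the uniformity of the choice of $E_\varepsilon$) yields an infinite c.e.~subtree of $T_P$, contradicting tree-immunity of $P$.

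The principal obstacle is the planar geometric realization: a naive embedding of a binary tree of markers gives a Cantor-fan-like dendroid rather than a dendrite (local connectivity at the Cantor set of path-limits fails), so the markers must be arranged so that infinite chains funnel down to the base arc while still permitting the tree code to be recovered from the enumeration of any putative computable $E_\varepsilon$. A secondary subtlety is the uniformity issue in amalgamating the $T^\ast_\varepsilon$'s into a single infinite c.e.~tree.
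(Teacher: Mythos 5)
Your construction, as sketched, is almost computable, so it does not yield a counterexample. If $D$ consists of the base arc with a shrinking binary-tree array of markers $\{M_\sigma\}_{\sigma\in T_P}$ attached to it, then for each $l\in\mathbb{N}$ the finite tree $T_P\cap 2^{\leq l}$ is a finite --- hence computable --- set, and the truncation $E_l$ obtained by taking the base arc together with all markers $M_\sigma$ with $lh(\sigma)\leq l$ is a computable subdendrite of $D$ with $d_H(E_l,D)\to 0$ as $l\to\infty$ (since the marker diameters decay to $0$). Lemma~\ref{lem:rite:4} does not forbid such approximations: it only says a computable subdendrite of $\Psi(T_P)$ avoids the base segment of the embedded tree, which the truncations $E_l$ do in fact avoid.

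The paper resolves this by combining tree-immunity with a \emph{second} incomputability device, an incomputable c.e.\ set $A$, exactly as in the proof of Theorem~\ref{thm:main:dendrite1}. Copies of $\Psi(T_P)$ are mounted on the risings with $t\notin A$, while the risings with $t\in A$ carry single fat paths of positive width $w(t)>0$. Lemma~\ref{lem:rite:4} forces any computable subdendrite $J\subseteq H$ to avoid the tops of all the $t\notin A$ blocks; the set $C$ of those $t$ for which $J$ avoids the top of the $t$-th block is c.e.\ and contains $\mathbb{N}\setminus A$, so by incomputability of $A$ it must also contain infinitely many $t\in A$. For each such $t$, avoiding the top of the fat path \emph{disconnects} $H$, forcing $d_H(J,H)\geq 1$. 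It is this cutting mechanism --- absent from your proposal --- that converts the information-theoretic obstruction supplied by tree-immunity into a lower bound on the Hausdorff distance.

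Two further issues with the sketch: (i) the amalgamation of the $T^*_\varepsilon$ over $\varepsilon\to 0$ is not effective, since the approximants $E_\varepsilon$ granted by almost-computability are not uniformly given in $\varepsilon$, and for any single $\varepsilon$ the set $T^*_\varepsilon$ is only guaranteed to contain $T_P$ up to a finite depth; (ii) tree-immunity forbids infinite \emph{computable} subtrees, whereas $T^*_\varepsilon$ is merely c.e.\ --- the paper invokes the $\Sigma^0_1$ reduction property (Lemma~\ref{lem:rite:3}) precisely to upgrade from c.e.\ to computable.
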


To prove Theorem \ref{thm:rite_alcom}, we need to prepare some tools.
For a string $\sigma\in 2^{<\mathbb{N}}$, let $lh(\sigma)$ denote the length of $\sigma$.
Then
\[\psi(\sigma)=\left\lrangle{2^{-1}\cdot 3^{-i}+2\sum_{i<lh(\sigma)\;\&\; \sigma(i)=1}3^{-(i+1)},2^{-lh(\sigma)}\right}\in\mathbb{R}^2.\]
For two points $\vec{x},\vec{y}\in\mathbb{R}^2$, the closed line segment $L(\vec{x},\vec{y})$ from $\vec{x}$ to $\vec{y}$ is defined by $L(\vec{x},\vec{y})=\{(1-t)\vec{x}+t\vec{y}:t\in [0,1]\}$.
For a (possibly infinite) tree $T\subseteq 2^{<\mathbb{N}}$, we plot an embedded tree $\Psi(T)\subseteq\mathbb{R}^2$ by
\[\Psi(T)=cl\left(\bigcup\{L(\psi(\sigma),\psi(\tau)):\sigma,\tau\in T\;\&\;lh(\sigma)=lh(\tau)+1\}\right).\]
Then $\Psi(T)$ is a dendrite (but not necessarily a tree, in the sense of Continuum Theory), for any (possibly infinite) tree $T\subseteq 2^\mathbb{N}$.
See Fig.\ \ref{fig:Tree}.

\begin{figure}[t]\centering
  \begin{center}
\unitlength 0.1in
\begin{picture}( 27.2000, 10.3000)(  0.3000,-12.2000)
%
\special{pn 8}%
\special{pa 724 800}%
\special{pa 1324 400}%
\special{fp}%
\special{pa 1324 400}%
\special{pa 1924 800}%
\special{fp}%
%
\special{pn 13}%
\special{sh 1}%
\special{ar 1324 400 10 10 0  6.28318530717959E+0000}%
\special{sh 1}%
\special{ar 724 800 10 10 0  6.28318530717959E+0000}%
\special{sh 1}%
\special{ar 1924 800 10 10 0  6.28318530717959E+0000}%
%
\special{pn 8}%
\special{pa 724 800}%
\special{pa 424 1000}%
\special{fp}%
\special{pa 724 800}%
\special{pa 1024 1000}%
\special{fp}%
\special{pa 1924 800}%
\special{pa 1624 1000}%
\special{fp}%
\special{pa 1924 800}%
\special{pa 2224 1000}%
\special{fp}%
\special{pa 424 1000}%
\special{pa 276 1100}%
\special{fp}%
\special{pa 424 1000}%
\special{pa 576 1100}%
\special{fp}%
\special{pa 1024 1000}%
\special{pa 876 1100}%
\special{fp}%
\special{pa 1024 1000}%
\special{pa 1176 1100}%
\special{fp}%
\special{pa 1624 1000}%
\special{pa 1476 1100}%
\special{fp}%
\special{pa 1624 1000}%
\special{pa 1776 1100}%
\special{fp}%
\special{pa 2224 1000}%
\special{pa 2076 1100}%
\special{fp}%
\special{pa 2224 1000}%
\special{pa 2376 1100}%
\special{fp}%
%
\special{pn 13}%
\special{sh 1}%
\special{ar 424 1000 10 10 0  6.28318530717959E+0000}%
\special{sh 1}%
\special{ar 276 1100 10 10 0  6.28318530717959E+0000}%
\special{sh 1}%
\special{ar 576 1100 10 10 0  6.28318530717959E+0000}%
\special{sh 1}%
\special{ar 876 1100 10 10 0  6.28318530717959E+0000}%
\special{sh 1}%
\special{ar 1024 1000 10 10 0  6.28318530717959E+0000}%
\special{sh 1}%
\special{ar 1176 1100 10 10 0  6.28318530717959E+0000}%
\special{sh 1}%
\special{ar 1476 1100 10 10 0  6.28318530717959E+0000}%
\special{sh 1}%
\special{ar 1624 1000 10 10 0  6.28318530717959E+0000}%
\special{sh 1}%
\special{ar 1776 1100 10 10 0  6.28318530717959E+0000}%
\special{sh 1}%
\special{ar 2076 1100 10 10 0  6.28318530717959E+0000}%
\special{sh 1}%
\special{ar 2224 1000 10 10 0  6.28318530717959E+0000}%
\special{sh 1}%
\special{ar 2376 1100 10 10 0  6.28318530717959E+0000}%
%
\special{pn 8}%
\special{pa 276 1100}%
\special{pa 200 1200}%
\special{dt 0.045}%
\special{pa 282 1116}%
\special{pa 350 1200}%
\special{dt 0.045}%
\special{pa 576 1100}%
\special{pa 500 1200}%
\special{dt 0.045}%
\special{pa 576 1100}%
\special{pa 650 1200}%
\special{dt 0.045}%
\special{pa 876 1100}%
\special{pa 800 1200}%
\special{dt 0.045}%
\special{pa 876 1100}%
\special{pa 950 1200}%
\special{dt 0.045}%
\special{pa 1176 1100}%
\special{pa 1100 1200}%
\special{dt 0.045}%
\special{pa 1176 1100}%
\special{pa 1250 1200}%
\special{dt 0.045}%
\special{pa 1476 1100}%
\special{pa 1400 1200}%
\special{dt 0.045}%
\special{pa 1476 1100}%
\special{pa 1550 1200}%
\special{dt 0.045}%
\special{pa 1776 1100}%
\special{pa 1700 1200}%
\special{dt 0.045}%
\special{pa 1776 1100}%
\special{pa 1850 1200}%
\special{dt 0.045}%
\special{pa 2076 1100}%
\special{pa 2000 1200}%
\special{dt 0.045}%
\special{pa 2076 1100}%
\special{pa 2150 1200}%
\special{dt 0.045}%
\special{pa 2376 1100}%
\special{pa 2300 1200}%
\special{dt 0.045}%
\special{pa 2376 1100}%
\special{pa 2450 1200}%
\special{dt 0.045}%
\put(12.0000,-3.6000){\makebox(0,0)[lb]{$\psi(\lrangle{})$}}%
\put(4.6000,-7.4000){\makebox(0,0)[lb]{$\psi(\lrangle{0})$}}%
%
\special{pn 8}%
\special{pa 150 1200}%
\special{pa 150 200}%
\special{fp}%
\special{sh 1}%
\special{pa 150 200}%
\special{pa 130 268}%
\special{pa 150 254}%
\special{pa 170 268}%
\special{pa 150 200}%
\special{fp}%
\special{pa 150 1200}%
\special{pa 2750 1200}%
\special{fp}%
\special{sh 1}%
\special{pa 2750 1200}%
\special{pa 2684 1180}%
\special{pa 2698 1200}%
\special{pa 2684 1220}%
\special{pa 2750 1200}%
\special{fp}%
\put(19.0000,-7.4000){\makebox(0,0)[lb]{$\psi(\lrangle{1})$}}%
\put(2.0000,-9.4000){\makebox(0,0)[lb]{$\psi(\lrangle{00})$}}%
\put(9.2000,-9.4000){\makebox(0,0)[lb]{$\psi(\lrangle{01})$}}%
\put(13.8000,-9.4000){\makebox(0,0)[lb]{$\psi(\lrangle{10})$}}%
\put(22.2000,-9.4000){\makebox(0,0)[lb]{$\psi(\lrangle{11})$}}%
\put(1.1000,-13.1000){\makebox(0,0)[lb]{$0$}}%
\put(25.2000,-13.1000){\makebox(0,0)[lb]{$1$}}%
\put(0.3000,-4.3000){\makebox(0,0)[lb]{$1$}}%
%
\special{pn 8}%
\special{pa 2520 1220}%
\special{pa 2520 1180}%
\special{fp}%
%
\special{pn 8}%
\special{pa 130 400}%
\special{pa 170 400}%
\special{fp}%
\end{picture}%
  \end{center}
 \vspace{-0.5em}
\caption{The plotted tree $\Psi(2^{<\nn})$.}
  \label{fig:Tree}
\end{figure}

We can easily prove the following lemmata.

\begin{lemma}\label{lem:rite:1}
Let $T$ be a subtree of $2^{<\mathbb{N}}$, and $D$ be a planar subset such that $\psi(\lrangle{})\in D\subseteq \Psi(T)$ for the root $\lrangle{}\in 2^{<\mathbb{N}}$.
Then $D$ is a dendrite if and only if $D$ is homeomorphic to $\Psi(S)$ for a subtree $S\subseteq T$.
\end{lemma}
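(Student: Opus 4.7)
The ``$(\Leftarrow)$'' direction is immediate: being a dendrite is a topological invariant, and $\Psi(S)$ is already noted to be a dendrite for any (possibly infinite) subtree $S$. For ``$(\Rightarrow)$'', the plan is to extract $S$ from $D$ node-by-node and then build a piecewise-affine homeomorphism $\Psi(S)\to D$.

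First I would set $S_0 = \{\sigma\in T : \psi(\sigma) \in D\}$. The key general fact, used repeatedly, is that $D\subseteq \Psi(T)$, being a dendrite, inherits the unique-arc property of the ambient dendrite $\Psi(T)$: for any two points $x,y\in D$, the unique arc joining them in $\Psi(T)$ must already lie in $D$. Applied to the root $\psi(\lrangle{})$ and any $\psi(\sigma)\in D$, this forces every initial segment of $\sigma$ into $S_0$, so $S_0$ is closed under initial segments. Applied to $\psi(\sigma)$ and any other point of $D$ on the edge-segment $L_{\sigma,i}:=L(\psi(\sigma),\psi(\sigma\frown i))$ (with $\sigma\in S_0$ and $\sigma\frown i\in T$), it shows that $D\cap L_{\sigma,i}$ is a closed initial sub-segment $L(\psi(\sigma),q_{\sigma,i})$: if the parameter-$t_0$ point lies in $D$, the whole sub-segment up to parameter $t_0$ must too. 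Let $t^*_{\sigma,i}\in[0,1]$ be the supremum parameter and $q_{\sigma,i}$ the corresponding endpoint.

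I would then set $S_1 = \{\sigma\frown i \in T\setminus S_0 : \sigma\in S_0,\ t^*_{\sigma,i}>0\}$ and $S=S_0\cup S_1$. A short verification shows that each $\sigma\frown i\in S_1$ is a leaf of $S$ (otherwise a deeper point would force $\psi(\sigma\frown i)$ into $D$, contradicting $\sigma\frown i\notin S_0$) and that $S\subseteq T$ is a subtree. The homeomorphism $h\colon\Psi(S)\to D$ is defined on nodes by $h(\psi(\sigma))=\psi(\sigma)$ for $\sigma\in S_0$ and $h(\psi(\sigma\frown i))=q_{\sigma,i}$ for $\sigma\frown i\in S_1$, and extended affinely on each edge of $\Psi(S)$; for a partial-edge leaf, this just linearly reparametrizes the full segment onto $L(\psi(\sigma),q_{\sigma,i})$. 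On the union of edges $h$ is continuous and injective by construction.

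The main obstacle I expect is handling limit points in the infinite case. Any $p\in D$ not on a finite-depth edge must be a limit $p=\lim_n \psi(\alpha\res n)$ for some infinite branch $\alpha\in 2^{\mathbb{N}}$ of $T$; applying the unique-arc argument to $p$ and the root forces $\alpha\res n\in S_0$ for every $n$, so $\alpha$ is an infinite branch of $S$ and $p$ is already in $\Psi(S)$ as the same limit. Conversely, any limit point of $\Psi(S)$ arises from such a branch and lies in $D$ by closedness. Since $h$ fixes every $\psi(\alpha\res n)$, it extends continuously across limit points as the identity on them, yielding a continuous bijection $\Psi(S)\to D$; as $\Psi(S)$ is compact and $D$ is Hausdorff, this bijection is automatically a homeomorphism, completing the proof.
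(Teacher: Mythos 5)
The proposal is correct and takes essentially the same approach as the paper: both extract $S$ as the set of strings whose edge in $\Psi(T)$ meets $D$ nontrivially (the student's $S_0\cup S_1$ coincides with the paper's direct definition, which admits $\sigma$ iff $D\cap(L(\psi(\sigma^-),\psi(\sigma))\setminus\{\psi(\sigma^-)\})\neq\emptyset$), using the unique-arc property of subdendrites to show $S$ is a subtree. The student then explicitly builds the piecewise-affine homeomorphism and handles limit points, which the paper compresses into ``it is easy to see that $D$ is homeomorphic to $\Psi(S)$.''
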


\begin{proof}\upshape
The ``if'' part is obvious.
Let $D$ be a dendrite.
For a binary string $\sigma$ which is not a root, let $\sigma^-$ be an immediate predecessor of $\sigma$.
We consider the set $S=\{\lrangle{}\}\cup\{\sigma\in 2^{<\mathbb{N}}:\sigma\not=\lrangle{}\;\&\;D\cap(L(\psi(\sigma^-),\psi(\sigma))\setminus\{\psi(\sigma^-)\})\not=\emptyset\}$.
Since $D$ is connected, $S$ is a subtree of $T$.
It is easy to see that $D$ is homeomorphic to $\Psi(S)$.
\end{proof}

\begin{lemma}\label{lem:rite:2}
Let $T$ be a subtree of $2^{<\mathbb{N}}$.
Then $T$ is $\Pi^0_1$ (c.e., computable, resp.) if and only if $\Psi(T)$ is a $\Pi^0_1$ (c.e., computable, resp.) dendrite in $\mathbb{R}^2$.
\end{lemma}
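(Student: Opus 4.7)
My plan is to reduce the lemma to proving the $\Pi^0_1$ and c.e.\ equivalences separately, since a closed set is computable iff it is both $\Pi^0_1$ and c.e. For each equivalence I will treat the forward and backward directions independently, exploiting the following geometric feature of the embedding $\psi$: because $\psi(\sigma)$ has $y$-coordinate $2^{-lh(\sigma)}>0$ and strings at the same level are spread horizontally by gaps of order $3^{-lh(\sigma)}$, the segments $L(\psi(\sigma^-),\psi(\sigma))$ for distinct $\sigma\in 2^{<\nn}$ are pairwise disjoint except at shared endpoints, and the only limit points of $\Psi(T)$ lie on the $x$-axis. Consequently $\psi(\sigma)\in\Psi(T)$ iff $\sigma\in T$ for each non-root $\sigma$.

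For the backward directions I would associate to each $\sigma$ a computable rational radius $r_\sigma>0$ (of order $3^{-lh(\sigma)}$) such that the rational open ball $B_\sigma:=B(\psi(\sigma);r_\sigma)$ intersects $\Psi(T)$ iff $\sigma\in T$. Then $\Psi(T)$ being c.e.\ (respectively, $\Pi^0_1$) yields a semi-decision procedure for $\sigma\in T$ (respectively, $\sigma\notin T$) via the test on $B_\sigma$. For the forward c.e.\ direction, I would show that $B\cap\Psi(T)\neq\emptyset$ iff there exists $\sigma\in T$ with $B$ meeting $L(\psi(\sigma^-),\psi(\sigma))$ (or $\psi(\lrangle{})\in B$ in the root case); the one subtle case is $B$ meeting $\Psi(T)$ only through a limit point $p=\lim_n\psi(\alpha\res n)$ of some infinite branch $\alpha$ of $T$, but openness of $B$ forces $\psi(\alpha\res n)\in B$ for sufficiently large $n$, so the segment test on $\alpha\res n\in T$ eventually succeeds.

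The main obstacle is the forward $\Pi^0_1$ direction. I would introduce the depth-$N$ over-approximation
\[
\Psi^*_N(T) := \bigcup_{\sigma\in T,\,lh(\sigma)\leq N} L(\psi(\sigma^-),\psi(\sigma)) \ \cup \ \bigcup_{\tau\in T,\,lh(\tau)=N}\mathrm{Cone}(\tau),
\]
where $\mathrm{Cone}(\tau)$ is a closed rational triangular region containing the closure of $\psi(\{\tau'\in 2^{<\nn}: \tau'\supseteq\tau\})$. The key geometric claim, which I would verify with a K\"onig's lemma argument, is $\Psi(T)=\bigcap_N\Psi^*_N(T)$: a point $p$ in the intersection that is not already on any segment of $T$ must lie in $\mathrm{Cone}(\tau_N)$ for compatible $\tau_N\in T\cap 2^N$, and these assemble into an infinite branch $\alpha$ of $T$ with $p=\lim_n\psi(\alpha\res n)\in\Psi(T)$. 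Granted this, compactness of $\overline{B}$ gives $B\cap\Psi(T)=\emptyset$ iff $\exists N\ B\cap\Psi^*_N(T)=\emptyset$. For each $N$ the finite set $F_{B,N}\subseteq 2^{\leq N}$ of strings whose associated segment or cone meets $B$ is computable, and $B\cap\Psi^*_N(T)=\emptyset$ iff $F_{B,N}\subseteq 2^{<\nn}\setminus T$; since $T^c$ is c.e., this is semi-decidable, and unioning over $N$ enumerates the complement of $\Psi(T)$ as a c.e.\ open set. The only technical care required is to set up $\mathrm{Cone}(\tau)$ with explicit rational vertices and to verify the K\"onig-style step cleanly.
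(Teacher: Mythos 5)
Your proposal is correct and, for three of the four implications, proceeds along essentially the same lines as the paper. For the backward directions ($\Psi(T)$ $\Pi^0_1$/c.e.\ implies $T$ $\Pi^0_1$/c.e.) the paper also reduces membership $\sigma\in T$ to a test on a small rational ball near $\psi(\sigma)$; the only cosmetic difference is that the paper uses two tailored ball families --- closed balls $\hat{B}_-(\sigma)$ centered at $\psi(\sigma)$ for the $\Pi^0_1$ case, and small open balls $B_+(\sigma)$ chosen inside the segment $L(\psi(\sigma^-),\psi(\sigma))$ for the c.e.\ case --- whereas you propose a single family $B_\sigma$ centered at $\psi(\sigma)$ serving both; that works, but the paper's $B_+(\sigma)\subseteq L(\psi(\sigma^-),\psi(\sigma))$ choice avoids having to reason about siblings or children simultaneously. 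For the forward c.e.\ direction your segment-by-segment enumeration and the explicit treatment of limit points on the $x$-axis match the paper's argument (the paper handles the limit-point subtlety silently; you make it explicit, which is a plus). The genuine divergence is in the forward $\Pi^0_1$ direction, which the paper dispatches in one sentence as ``easy'' given that $\Psi(2^{<\nn})$ is computable; you instead build the decreasing sequence of level-$N$ cone over-approximations $\Psi^*_N(T)\supseteq\Psi(T)$, prove $\bigcap_N\Psi^*_N(T)=\Psi(T)$ via a K\"onig's-lemma step, and then use compactness of $\hat{B}$ to reduce $\hat{B}\cap\Psi(T)=\emptyset$ to a finite, semi-decidable check $F_{\hat{B},N}\subseteq 2^{<\nn}\setminus T$. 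This is a valid and more self-contained way to extract the $\Pi^0_1$-ness; the implicit argument the paper has in mind (remove from the computable set $\Psi(2^{<\nn})$ a uniformly computable open set for each string enumerated into $2^{<\nn}\setminus T$) is shorter but buries the same nested-region/limit-point reasoning. One small caution: in your last step you should state the criterion using the closed ball $\hat{B}$ rather than the open ball $B$, since $B\cap\Psi(T)=\emptyset$ does not by itself force $B\cap\Psi^*_N(T)=\emptyset$ for some $N$ when $\partial B$ meets $\Psi(T)$; enumerating $\{\rho_e:\hat{\rho}_e\cap\Psi(T)=\emptyset\}$ suffices since $\Psi(T)$ is closed.
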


\begin{proof}\upshape
With our definition of $\Psi$, the dendrite $\Psi(2^{<\mathbb{N}})$ is clearly a computable closed subset of $\mathbb{R}^2$.
So, if $T$ is $\Pi^0_1$, then it is easy to prove that $\Psi(T)$ is also $\Pi^0_1$.
Assume that $T$ is a c.e.\ tree.
At stage $s$, we compute whether $L(\psi(\sigma^-),\psi(\sigma))$ intersects with the $e$-th open rational ball $\rho_e$, for any $e<s$ and any string $\sigma$ which is already enumerated into $T$ by stage $s$.
If so, we enumerate $e$ into $W_T$ at stage $s$.
Then $\{e\in\nn:\Psi(T)\cap\rho_e\not=\emptyset\}=W_T$ is c.e.

Assume that $\Psi(T)$ is $\Pi^0_1$.
We consider an open rational ball $B_-(\sigma)=B(\psi(\sigma);2^{-(lh(\sigma)+2)})$ for each $\sigma\in 2^{<\mathbb{N}}$.
Note that $\hat{B}_-(\sigma)\cap\hat{B}_-(\tau)=\emptyset$ for $\sigma\not=\tau$.
Since $\Psi(T)$ is $\Pi^0_1$, $T^*=\{\sigma\in 2^{<\mathbb{N}}:\Psi(T)\cap\hat{B}_-(\sigma)=\emptyset\}$ is c.e., and it is easy to see that $T=2^{<\mathbb{N}}\setminus T^*$.
Thus, $T$ is a $\Pi^0_1$ tree of $2^{<\mathbb{N}}$.
We next assume that $\Psi(T)$ is c.e.
We can assume that $\Psi(T)$ contains the root $\psi(\lrangle{})$, otherwise $T=\emptyset$, and clearly it is c.e.
For a binary string $\sigma$ which is not a root, let $\sigma^-$ be an immediate predecessor of $\sigma$.
Pick an open rational ball $B_+(\sigma)$ such that $\Psi(2^{<\mathbb{N}})\cap B_+(\sigma)\subseteq L(\psi(\sigma^-),\psi(\sigma))$ for each $\sigma$.
Since $\Psi(T)$ is c.e., $T^*=\{\sigma\in 2^{<\mathbb{N}}:\Psi(T)\cap B_+(\sigma)\not=\emptyset\}$ is c.e.
If $\sigma$ is not a root and $\sigma\in T$ then $L(\psi(\sigma^-),\psi(\sigma))\subseteq\Psi(T)$, so $\Psi(T)\cap B_+(\sigma)\not=\emptyset$.
We observe that if $\sigma\not\in T$ then $L(\psi(\sigma^-),\psi(\sigma))\cap\Psi(T)=\emptyset$, so $\Psi(T)\cap B_+(\sigma)=\emptyset$.
Thus, we have $T=T^*$.
In the case that $\Psi(T)$ is computable, $\Psi(T)$ is c.e.\ and $\Pi^0_1$, hence $T$ is c.e.\ and $\Pi^0_1$, i.e., $T$ is computable.
\end{proof}

\begin{lemma}\label{lem:rite:3}
Let $D$ be a computable subdendrite of $\Psi(2^{<\mathbb{N}})$.
Then there exists a computable subtree $T^+\subseteq 2^{<\mathbb{N}}$ such that $D\subseteq \Psi(T^+)$ and $([0,1]\times\{0\})\cap D=([0,1]\times\{0\})\cap\Psi(T^+)$.
\end{lemma}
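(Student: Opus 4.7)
The plan is as follows. For each $\sigma \in 2^{<\mathbb{N}}$ write $\Psi_\sigma := \mathrm{cl}(\bigcup_{\sigma \preceq \tau}L(\psi(\tau^-),\psi(\tau)))$ for the closure of the subtree of $\Psi(2^{<\mathbb{N}})$ rooted at $\psi(\sigma)$, and $\Psi^c_\sigma := \mathrm{cl}(\bigcup_{\tau \not\succeq \sigma} L(\psi(\tau^-),\psi(\tau)))$ for the closure of its complement; these are uniformly computable closed sets with $\Psi_\sigma \cup \Psi^c_\sigma = \Psi(2^{<\mathbb{N}})$ and (for $\sigma \neq \lrangle{}$) $\Psi_\sigma \cap \Psi^c_\sigma = \{\psi(\sigma^-)\}$. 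I take
\[
T^+ \;:=\; \{\lrangle{}\} \;\cup\; \{\sigma \in 2^{<\mathbb{N}}\setminus\{\lrangle{}\} : D \cap W_\sigma \neq \emptyset\}, \qquad W_\sigma := \mathbb{R}^2 \setminus \Psi^c_\sigma.
\]
Here $W_\sigma$ is a c.e.\ open set with $W_\sigma \cap \Psi(2^{<\mathbb{N}}) = \Psi_\sigma \setminus \{\psi(\sigma^-)\}$, and $T^+$ is prefix-closed because $W_\tau \supseteq W_\sigma$ whenever $\tau \preceq \sigma$.

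To check $D \subseteq \Psi(T^+)$ I case-analyse $p \in D \subseteq \Psi(2^{<\mathbb{N}})$. If $p$ lies on the interior of some edge $e_\tau$ or is a vertex $\psi(\tau)$, then $p \in \Psi_\tau$ with $p \neq \psi(\tau^-)$ (since vertices and edge-interior points differ from $\psi(\tau^-)$), so $\tau \in T^+$ and hence, by prefix-closure, $e_\tau \subseteq \Psi(T^+)$, giving $p \in \Psi(T^+)$. If $p$ is an axis limit $p = \lim_n \psi(f\res n)$ for the unique $f \in 2^{\mathbb{N}}$ corresponding to $p$, then $p$ is not a vertex, so $p \in \Psi_{f\res n} \setminus \{\psi((f\res n)^-)\}$ for every $n$, whence $f \in [T^+]$ and $p \in \Psi(T^+)$. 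For the nontrivial direction of the axis identity I use a shrinking-diameter argument: any axis point of $\Psi(T^+)$ arises from some $f \in [T^+]$, and for each $n$ the membership $f\res n \in T^+$ furnishes $p_n \in D \cap (\Psi_{f\res n} \setminus \{\psi((f\res n)^-)\})$; since $\mathrm{diam}(\Psi_{f\res n}) = O(2^{-n})$ and $\psi(f\res n) \to p$, we get $p_n \to p$, and closedness of $D$ forces $p \in D$.

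The main obstacle is showing that $T^+$ is \emph{computable}, not merely c.e. The inclusion side $\sigma \in T^+$ is c.e.\ by the very definition: one enumerates pairs $(B,\sigma)$ where $B$ is a rational open ball with $B \cap D \neq \emptyset$ and $\overline{B} \cap \Psi^c_\sigma = \emptyset$, both of which are c.e.\ from the computability of $D$ and the uniform $\Pi^0_1$-ness of $\Psi^c_\sigma$. The complementary side asks for a c.e.\ certificate of $D \subseteq \Psi^c_\sigma$, which is a priori only $\Pi^0_1$. My plan for upgrading it uses the clean geometric separation $\Psi_\sigma \cap \Psi^c_\sigma = \{\psi(\sigma^-)\}$: any obstruction to $\sigma \notin T^+$ localises at the single vertex $\psi(\sigma^-)$, and a positive-distance separation of $D$ from $\Psi_\sigma \setminus B(\psi(\sigma^-); r)$ for some rational $r > 0$ (which is c.e.\ since both sets are uniformly computable compact) combined with the local-connectedness and arc-uniqueness properties of the dendrite $D$ at $\psi(\sigma^-)$ should allow one to enumerate witnesses to $\sigma \notin T^+$. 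Carefully packaging this into a c.e.\ characterization — so that one does not need to decide the borderline case $\psi(\sigma^-) \in D$ in isolation — is the delicate technical step I expect to be hardest.
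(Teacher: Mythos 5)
Your decomposition is genuinely different from the paper's, and the first three steps are fine: with $W_\sigma=\mathbb{R}^2\setminus\Psi^c_\sigma$ your prefix-closedness, the inclusion $D\subseteq\Psi(T^+)$, and the shrinking-diameter argument for the axis identity are all correct. The problem is precisely the step you flag as ``the delicate technical step I expect to be hardest'': it is not merely delicate. Your $T^+$ is the \emph{canonical} tree of those $\sigma$ for which $D$ truly enters $W_\sigma$, and this set is in general not computable, so no amount of geometric reasoning near $\psi(\sigma^-)$ can produce the missing $\Sigma^0_1$ certificate.

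Here is a counterexample. Fix an incomputable c.e.\ set $A$ with computable enumeration $\{A_s\}$ and let $s_n$ be the stage at which $n$ enters $A$. Let $D_0\subseteq\Psi(2^{<\mathbb{N}})$ be the arc from $\psi(\lrangle{})$ along the branch $1^\infty$ to its axis limit. Put $\tau_n=1^n$ and $\sigma_n=1^n0$. Define $S_n=\{\psi(\tau_n)\}$ if $n\notin A$, and otherwise let $S_n=L\bigl(\psi(\tau_n),(1-\varepsilon_n)\psi(\tau_n)+\varepsilon_n\psi(\sigma_n)\bigr)$, a short initial piece of the edge $L(\psi(\tau_n),\psi(\sigma_n))$, with $\varepsilon_n$ chosen (computably from $s_n$) so small that $S_n\subseteq B(\psi(\tau_n);2^{-s_n})$. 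Then $D:=D_0\cup\bigcup_nS_n$ is a computable subdendrite of $\Psi(2^{<\mathbb{N}})$ containing the root: it is $\Pi^0_1$ because a rational ball at distance $>2^{-s}$ from $D_0$ and disjoint from every $S_n$ with $n\in A_s$ may be safely removed at stage $s$, since any $n$ not yet enumerated can only contribute a stick of diameter $<2^{-s}$; and it is c.e.\ because $\psi(\tau_n)\in S_n$ unconditionally and any longer stick is revealed at stage $s_n$. With this $D$, your tree satisfies $\sigma_n\in T^+$ iff $D\cap\bigl(\Psi_{\sigma_n}\setminus\{\psi(\tau_n)\}\bigr)\neq\emptyset$ iff $S_n$ is nondegenerate iff $n\in A$, so $T^+$ is a c.e.\ but incomputable tree.

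The paper avoids this by never trying to compute the canonical tree. It forms the two c.e.\ sets $T^*=\{\sigma:D\cap B_+(\sigma)\neq\emptyset\}$ and $U^*=\{\sigma:D\cap\hat{B}_-(\sigma)=\emptyset\}$, where $B_+(\sigma)$ meets $\Psi(2^{<\mathbb{N}})$ only in the interior of the edge $L(\psi(\sigma^-),\psi(\sigma))$ and $B_-(\sigma)$ is a small ball around $\psi(\sigma)$. Connectedness of $D$ (with $\psi(\lrangle{})\in D$) yields $T^*\cup U^*=2^{<\mathbb{N}}$ and confines $T^*\cap U^*$ to the leaves of $T^*$; the $\Sigma^0_1$-reduction property then splits the overlap into disjoint c.e.\ $T$ and $U$ covering everything, so $T$ is a computable tree, and one passes to its one-step extension $T^+$. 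Which side a borderline $\sigma$ (like your $\sigma_n$ with a tiny or absent stick) lands on is determined by enumeration order, not by the geometry of $D$; this over-inclusion is harmless because it only affects dead ends and hence leaves the set of infinite paths of $T^+$ unchanged, which is all that the axis identity requires. That is the idea your plan is missing: the certificate must be allowed to \emph{choose} the borderline cases, not to read off the truth about $D$.
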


\begin{proof}\upshape
We can assume $\psi(\lrangle{})\in D$, otherwise we connect $\psi(\lrangle{})$ and the root of $D$ by a subarc of $\Psi(2^{<\mathbb{N}})$.
Again we consider an open rational ball $B_-(\sigma)=B(\psi(\sigma);2^{-(lh(\sigma)+2)})$, and an open rational ball $B_+(\sigma)$ such that $\Psi(2^{<\mathbb{N}})\cap B_+(\sigma)\subseteq L(\psi(\sigma^-),\psi(\sigma))$ for each $\sigma\in 2^{<\nn}$.
Since $D$ is $\Pi^0_1$, $U^*=\{\sigma\in 2^{<\mathbb{N}}:D\cap\hat{B}_-(\sigma)=\emptyset\}$ is c.e.
Since $D$ is c.e., $T^*=\{\sigma\in 2^{<\mathbb{N}}:D\cap B_+(\sigma)\not=\emptyset\}$ is c.e., and it is a tree by Lemma \ref{lem:rite:1}.
For every $\sigma\in 2^{<\mathbb{N}}$, either $D\cap\hat{B}_-(\sigma)=\emptyset$ or $D\cap B_+(\sigma)\not=\emptyset$ holds.
Therefore, we have $T^*\cup U^*=2^{<\nn}$.
Moreover, for the set of {\em leaves of $T^*$}, $L_T^*=\{\rho\in T^*:(\forall i<2)\;\rho\fr\lrangle{i}\not\in T^*\}$, we observe that $T^*\cap U^*\subseteq L_T^*$.
Recall that the pointclass $\Sigma^0_1$ has the reduction property, that is, for two c.e.\ sets $T^*$ and $U^*$, there exist c.e.\ subsets $T\subseteq T^*$ and $U\subseteq U^*$ such that $T\cup U=T^*\cup U^*$ and $T\cap U=\emptyset$.
This is because, for $\sigma\in T^*\cap U^*$, $\sigma$ is enumerated into $T$ when $\sigma$ is enumerated into $T^*$ before it is enumerated into $U^*$; $\sigma$ is enumerated into $U$ otherwise.
Since $T^*\cap U^*\subseteq L_T^*$, $T$ must be tree.
Furthermore, $T$ is c.e., and $U=2^{<\mathbb{N}}\setminus T$ is also c.e.
Thus, $T$ is a computable tree.
Therefore, $T^+=\{\sigma\fr\lrangle{i}:\sigma\in T\;\&\;i<2\}$ is also a computable tree.
Then, $D\subseteq\Psi(T^+)$, and we have $([0,1]\times\{0\})\cap D=([0,1]\times\{0\})\cap\Psi(T^+)$ since the set of all infinite paths of $T$ and that of $T^+$ coincide.
\end{proof}

Cenzer, Weber and Wu, and the author \cite{CKWW} introduced the notion of {\em tree-immunity} for closed sets in Cantor space $2^\mathbb{N}$.
For $\sigma\in 2^{<\mathbb{N}}$, define $I_\sigma$ as $\{f\in 2^\mathbb{N}:(\forall n<lh(\sigma))\;f(n)=\sigma(n)\}$.
Note that $\{I_\sigma:\sigma\in 2^{<\mathbb{N}}\}$ is a countable base for Cantor space.

\begin{definition}[Cenzer-Kihara-Weber-Wu \cite{CKWW}]
A nonempty closed set $F\subseteq 2^\mathbb{N}$ is said to be {\em tree-immune} if the tree $T_F=\{\sigma\in 2^{<\mathbb{N}}:F\cap I_\sigma\not=\emptyset\}\subseteq 2^{<\mathbb{N}}$ contains no infinite computable subtree.
\end{definition}

For a nonempty $\Pi^0_1$ subset $P\subseteq 2^\mathbb{N}$, the corresponding tree $T_P$ is $\Pi^0_1$, and it has no dead ends.
The set of {\em all complete consistent extensions of Peano Arithmetic} is an example of a tree-immune $\Pi^0_1$ subset of $2^\mathbb{N}$.
Tree-immune $\Pi^0_1$ sets have the following remarkable property.

\begin{lemma}\label{lem:rite:4}
Let $P$ be a tree-immune $\Pi^0_1$ subset of $2^\mathbb{N}$ and let $D\subseteq\Psi(T_P)$ be any computable subdendrite.
Then $([0,1]\times\{0\})\cap D=\emptyset$ holds.
\end{lemma}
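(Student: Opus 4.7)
\upshape
The plan is to assume $([0,1]\times\{0\})\cap D\neq\emptyset$ and produce an infinite computable subtree of $T_P$, contradicting tree-immunity of $P$. To that end, I first apply Lemma \ref{lem:rite:3} to obtain a computable tree $T^+$ with $D\subseteq\Psi(T^+)$ and $([0,1]\times\{0\})\cap D=([0,1]\times\{0\})\cap\Psi(T^+)$. Looking inside that proof, $T^+$ is constructed from an inner computable tree $T\subseteq T^*=\{\sigma:D\cap B_+(\sigma)\neq\emptyset\}$ by adjoining one layer of children, which become leaves of $T^+$. I will work with $T$ itself.

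Next, I will verify $T\subseteq T_P$. For any $\sigma\in T$, some $p\in D\cap B_+(\sigma)$ sits in the interior of the edge $L(\psi(\sigma^-),\psi(\sigma))$ at height at least $2^{-lh(\sigma)}>0$. Since $D\subseteq\Psi(T_P)$, this $p$ must actually lie on some edge $L(\psi(\tau^-),\psi(\tau))$ with $\tau\in T_P$: the closure in the definition of $\Psi(T_P)$ only adds x-axis limit points, because edges at depth $n$ live at height $\Theta(2^{-n})$ and so any limit point of positive height is already witnessed by a finite subcollection of edges, hence lies in the union itself. Distinct parent--child edges of $\Psi(2^{<\mathbb{N}})$ meet only at common endpoints, which $B_+(\sigma)$ excludes, forcing $\tau=\sigma\in T_P$.

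Then I will argue $T$ is infinite. By assumption and Lemma \ref{lem:rite:3}, $\Psi(T^+)$ reaches the x-axis, and the x-axis points of $\Psi(T^+)$ are precisely the addresses $\lim_n\psi(f\res n)$ for $f\in[T^+]$; so $T^+$ has an infinite branch. Because $T^+\setminus T$ consists only of leaves of $T^+$, any such branch in fact lies entirely in $T$, making $T$ infinite. Thus $T$ is an infinite computable subtree of $T_P$, the desired contradiction.

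The main obstacle I anticipate is the mismatch between Lemma \ref{lem:rite:3}, which delivers $T^+$, and what we need here, namely a tree genuinely inside $T_P$ (the ``extra'' leaves of $T^+$ need not be in $T_P$). I will handle this by unpacking the construction inside the proof of Lemma \ref{lem:rite:3} to expose the inner computable tree $T$; alternatively, one could prune $T^+$ of its extra leaves and verify that the result is still both computable and contained in $T_P$, which works because the added leaves are precisely the children of $T$-nodes.
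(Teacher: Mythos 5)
Your proposal is correct and follows essentially the same route as the paper: apply Lemma \ref{lem:rite:3}, show the resulting tree is contained in $T_P$, and invoke tree-immunity. The one place where you go beyond the paper is in fact a genuine improvement: the paper simply writes ``Since $D\subseteq\Psi(T_P)$, and since $T_P$ has no dead ends, $T\subseteq T_P$ holds,'' where $T$ is the $T^+$ delivered by Lemma \ref{lem:rite:3}. But as you correctly anticipate, $T^+$ is built by adjoining an extra layer of children to an inner tree $T$, and those added leaves need not belong to $T_P$ (e.g., if $T_P$ is a single branch, one of the two children of each inner node is missing from $T_P$), so the paper's assertion $T^+\subseteq T_P$ is strictly speaking not true. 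Your fix --- unpacking the construction to isolate the inner $T$, verifying $T\subseteq T_P$ via the balls $B_+(\sigma)$ and the observation that the closure in the definition of $\Psi(T_P)$ only contributes points on the $x$-axis, and then noting that $T^+\setminus T$ consists only of leaves so any infinite branch of $T^+$ already sits inside $T$ --- is exactly the right repair, and the infinitude of $T$ then contradicts tree-immunity. Logically your argument is the contrapositive of the paper's (paper: $T$ finite $\Rightarrow D$ bounded away from the $x$-axis; you: $D$ meets the $x$-axis $\Rightarrow T$ infinite), but the mathematical content is identical. Good work noticing and patching the subtlety.
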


\begin{proof}\upshape
By Lemma \ref{lem:rite:3}, there exists a computable subtree $T\subseteq 2^{<\mathbb{N}}$ such that $D\subseteq\Psi(T)$ and $\Psi(T)$ agrees with $D$ on $[0,1]\times\{0\}$.
Since $D\subseteq\Psi(T_P)$, and since $T_P$ has no dead ends, $T\subseteq T_P$ holds.
Since $P$ is tree-immune, $T$ must be finite.
By using weak K\"onig's lemma (or, equivalently, compactness of Cantor space), $T\subseteq 2^l$ holds for some $l\in\mathbb{N}$.
Thus, $D\subseteq\Psi(T)\subseteq [0,1]\times [2^{-l},1]$ as desired.
\end{proof}

Note that if $P$ is a nonempty $\Pi^0_1$ set in Cantor space $2^\mathbb{N}$, then for every $\delta>0$ it holds that $((0,1)\times(0,\delta))\cap\Psi(T_P)\not=\emptyset$.
Finally, we are ready to prove Theorem \ref{thm:rite_alcom}.

\begin{proof}[Proof of Theorem \ref{thm:rite_alcom}]\upshape
Again, we adapt the construction in the proof of Theorem \ref{thm:main:dendrite1}.
We fix a nonempty tree-immune $\Pi^0_1$ set $P\subseteq 2^\mathbb{N}$.
For $\sigma\in 2^{<\mathbb{N}}$, put $E(\sigma)=\{\tau\in 2^{<\mathbb{N}}:\tau\supseteq\sigma\}$.
For a $\Pi^0_1$ tree $T_P\subseteq 2^{<\mathbb{N}}$, there exists a computable function $f_P:\mathbb{N}\to 2^{<\mathbb{N}}$ such that $T_P=2^{<\mathbb{N}}\setminus\bigcup_nE(f_P(n))$ and such that for each $\sigma\in 2^{<\mathbb{N}}$ and $s\in\mathbb{N}$ we have $\sigma\in\bigcup_{t<s}E(f_P(t))$ whenever $\sigma\fr 0,\sigma\fr 1\in\bigcup_{t<s}E(f_P(t))$.
For such a computable function $f_P:\mathbb{N}\to 2^{<\mathbb{N}}$, we let $T_{P,s}$ denote $2^{<\mathbb{N}}\setminus\bigcup_{t<s}E(f_P(t))$.
Then $T_{P,s}$ is a tree without dead ends, and $\{T_{P,s}:s\in\nn\}$ is computable uniformly in $s$.

\begin{construction}\upshape
Let $\vec{e}_1$ denote $\lrangle{1,0}\in\mathbb{R}^2$.
For a tree $T\subseteq 2^{<\mathbb{N}}$ and $w\in\mathbb{Q}$, we define $\Psi(T;w)$, {\em the edge of the fat approximation of the tree $T$ of width $w$}, by
\begin{align*}
\Psi(T;w)=cl\biggl(\bigcup\Big\{L&\left(\psi(\sigma)\pm(3^{-|\sigma|}\cdot w)\vec{e}_1,\psi(\tau)\pm(3^{-|\tau|}\cdot w)\vec{e}_1\right)\\
&:\pm\in\{-,+\}\;\&\;\sigma,\tau\in T\;\&\;lh(\sigma)=lh(\tau)+1\Big\}\biggr).
\end{align*}

If $\lim_s w_s=0$ then we have $\lim_s\Psi(T;w_s)=\Psi(T)$.
Moreover, if $\{w_s:s\in\mathbb{N}\}$ is a uniformly computable sequence of rational numbers, then $\{\Psi(T;w_s):s\in\mathbb{N}\}$ is also a uniformly computable sequence of computable closed sets.
Additionally, the set $\Psi(T;w,c,t,q)$, for a tree $T\subseteq 2^{<\mathbb{N}}$, for $w,c,q\in\mathbb{Q}$, and for $t\in\mathbb{N}$, is defined by
\[\Psi(T;w,c,t,q)=\left\{\left\lrangle{c+q\cdot\left(x-\frac{1}{2}\right),\frac{2-y}{2^{t+1}}\right}\in\mathbb{R}^2:\lrangle{x,y}\in\Psi(T;w)\right\}.\]

\begin{figure}[t]\centering
 \begin{minipage}{0.48\hsize}
  \begin{center}
\unitlength 0.1in
\begin{picture}( 13.6000, 10.2000)(  0.3000,-12.2000)
%
\special{pn 8}%
\special{pa 90 1200}%
\special{pa 90 200}%
\special{fp}%
\special{sh 1}%
\special{pa 90 200}%
\special{pa 70 268}%
\special{pa 90 254}%
\special{pa 110 268}%
\special{pa 90 200}%
\special{fp}%
\special{pa 90 1200}%
\special{pa 1390 1200}%
\special{fp}%
\special{sh 1}%
\special{pa 1390 1200}%
\special{pa 1324 1180}%
\special{pa 1338 1200}%
\special{pa 1324 1220}%
\special{pa 1390 1200}%
\special{fp}%
\put(0.7000,-13.1000){\makebox(0,0)[lb]{$0$}}%
\put(12.7500,-13.1000){\makebox(0,0)[lb]{$1$}}%
\put(0.3000,-4.3000){\makebox(0,0)[lb]{$1$}}%
%
\special{pn 8}%
\special{pa 1276 1220}%
\special{pa 1276 1180}%
\special{fp}%
%
\special{pn 8}%
\special{pa 80 400}%
\special{pa 100 400}%
\special{fp}%
%
\special{pn 8}%
\special{sh 0.600}%
\special{pa 572 400}%
\special{pa 772 400}%
\special{pa 1022 800}%
\special{pa 922 800}%
\special{pa 922 800}%
\special{pa 572 400}%
\special{ip}%
%
\special{pn 8}%
\special{sh 0.600}%
\special{pa 572 400}%
\special{pa 772 400}%
\special{pa 422 800}%
\special{pa 322 800}%
\special{pa 322 800}%
\special{pa 572 400}%
\special{ip}%
%
\special{pn 8}%
\special{sh 0.600}%
\special{pa 322 800}%
\special{pa 198 1000}%
\special{pa 248 1000}%
\special{pa 422 800}%
\special{pa 422 800}%
\special{pa 322 800}%
\special{ip}%
%
\special{pn 8}%
\special{sh 0.600}%
\special{pa 322 800}%
\special{pa 422 800}%
\special{pa 548 1000}%
\special{pa 498 1000}%
\special{pa 498 1000}%
\special{pa 322 800}%
\special{ip}%
%
\special{pn 8}%
\special{sh 0.600}%
\special{pa 922 800}%
\special{pa 798 1000}%
\special{pa 848 1000}%
\special{pa 1022 800}%
\special{pa 1022 800}%
\special{pa 922 800}%
\special{ip}%
%
\special{pn 8}%
\special{sh 0.600}%
\special{pa 922 800}%
\special{pa 1098 1000}%
\special{pa 1148 1000}%
\special{pa 1022 800}%
\special{pa 1022 800}%
\special{pa 922 800}%
\special{ip}%
%
\special{pn 8}%
\special{sh 0.600}%
\special{pa 198 1000}%
\special{pa 136 1100}%
\special{pa 160 1100}%
\special{pa 248 1000}%
\special{pa 248 1000}%
\special{pa 198 1000}%
\special{ip}%
%
\special{pn 8}%
\special{sh 0.600}%
\special{pa 498 1000}%
\special{pa 436 1100}%
\special{pa 460 1100}%
\special{pa 548 1000}%
\special{pa 548 1000}%
\special{pa 498 1000}%
\special{ip}%
%
\special{pn 8}%
\special{sh 0.600}%
\special{pa 798 1000}%
\special{pa 736 1100}%
\special{pa 760 1100}%
\special{pa 848 1000}%
\special{pa 848 1000}%
\special{pa 798 1000}%
\special{ip}%
%
\special{pn 8}%
\special{sh 0.600}%
\special{pa 1096 1000}%
\special{pa 1032 1100}%
\special{pa 1058 1100}%
\special{pa 1146 1000}%
\special{pa 1146 1000}%
\special{pa 1096 1000}%
\special{ip}%
%
\special{pn 8}%
\special{sh 0.600}%
\special{pa 1148 1000}%
\special{pa 1210 1100}%
\special{pa 1186 1100}%
\special{pa 1098 1000}%
\special{pa 1098 1000}%
\special{pa 1148 1000}%
\special{ip}%
%
\special{pn 8}%
\special{sh 0.600}%
\special{pa 848 1000}%
\special{pa 910 1100}%
\special{pa 886 1100}%
\special{pa 798 1000}%
\special{pa 798 1000}%
\special{pa 848 1000}%
\special{ip}%
%
\special{pn 8}%
\special{sh 0.600}%
\special{pa 548 1000}%
\special{pa 610 1100}%
\special{pa 586 1100}%
\special{pa 498 1000}%
\special{pa 498 1000}%
\special{pa 548 1000}%
\special{ip}%
%
\special{pn 8}%
\special{sh 0.600}%
\special{pa 248 1000}%
\special{pa 310 1100}%
\special{pa 286 1100}%
\special{pa 198 1000}%
\special{pa 198 1000}%
\special{pa 248 1000}%
\special{ip}%
%
\special{pn 8}%
\special{sh 0.300}%
\special{pa 136 1100}%
\special{pa 90 1200}%
\special{pa 206 1200}%
\special{pa 160 1100}%
\special{pa 136 1100}%
\special{pa 136 1100}%
\special{pa 136 1100}%
\special{ip}%
%
\special{pn 8}%
\special{sh 0.300}%
\special{pa 286 1100}%
\special{pa 240 1200}%
\special{pa 356 1200}%
\special{pa 310 1100}%
\special{pa 286 1100}%
\special{pa 286 1100}%
\special{pa 286 1100}%
\special{ip}%
%
\special{pn 8}%
\special{sh 0.300}%
\special{pa 436 1100}%
\special{pa 390 1200}%
\special{pa 506 1200}%
\special{pa 460 1100}%
\special{pa 436 1100}%
\special{pa 436 1100}%
\special{pa 436 1100}%
\special{ip}%
%
\special{pn 8}%
\special{sh 0.300}%
\special{pa 586 1100}%
\special{pa 540 1200}%
\special{pa 656 1200}%
\special{pa 610 1100}%
\special{pa 586 1100}%
\special{pa 586 1100}%
\special{pa 586 1100}%
\special{ip}%
%
\special{pn 8}%
\special{sh 0.300}%
\special{pa 736 1100}%
\special{pa 690 1200}%
\special{pa 806 1200}%
\special{pa 760 1100}%
\special{pa 736 1100}%
\special{pa 736 1100}%
\special{pa 736 1100}%
\special{ip}%
%
\special{pn 8}%
\special{sh 0.300}%
\special{pa 886 1100}%
\special{pa 840 1200}%
\special{pa 956 1200}%
\special{pa 910 1100}%
\special{pa 886 1100}%
\special{pa 886 1100}%
\special{pa 886 1100}%
\special{ip}%
%
\special{pn 8}%
\special{sh 0.300}%
\special{pa 1036 1100}%
\special{pa 990 1200}%
\special{pa 1106 1200}%
\special{pa 1060 1100}%
\special{pa 1036 1100}%
\special{pa 1036 1100}%
\special{pa 1036 1100}%
\special{ip}%
%
\special{pn 8}%
\special{sh 0.300}%
\special{pa 1186 1100}%
\special{pa 1140 1200}%
\special{pa 1256 1200}%
\special{pa 1210 1100}%
\special{pa 1186 1100}%
\special{pa 1186 1100}%
\special{pa 1186 1100}%
\special{ip}%
\end{picture}%
  \end{center}
 \vspace{-0.5em}
\caption{The fat approximation $\Psi(T;w)$.}
  \label{fig:Treec}
 \end{minipage}
 \begin{minipage}{0.48\hsize}
  \begin{center}
\unitlength 0.1in
\begin{picture}( 20.0000, 11.3000)(  2.0000,-12.2000)
%
\special{pn 8}%
\special{pa 600 1200}%
\special{pa 600 200}%
\special{fp}%
\special{sh 1}%
\special{pa 600 200}%
\special{pa 580 268}%
\special{pa 600 254}%
\special{pa 620 268}%
\special{pa 600 200}%
\special{fp}%
\special{pa 600 1200}%
\special{pa 2200 1200}%
\special{fp}%
\special{sh 1}%
\special{pa 2200 1200}%
\special{pa 2134 1180}%
\special{pa 2148 1200}%
\special{pa 2134 1220}%
\special{pa 2200 1200}%
\special{fp}%
%
\special{pn 8}%
\special{pa 580 400}%
\special{pa 620 400}%
\special{fp}%
\special{pa 580 800}%
\special{pa 620 800}%
\special{fp}%
%
\special{pn 8}%
\special{pa 1400 1220}%
\special{pa 1400 1180}%
\special{fp}%
\put(3.6000,-4.5000){\makebox(0,0)[lb]{$2^{-t}$}}%
\put(2.0000,-8.5000){\makebox(0,0)[lb]{$2^{-(t+1)}$}}%
\put(13.7000,-13.2000){\makebox(0,0)[lb]{$c$}}%
%
\special{pn 8}%
\special{pa 1400 1200}%
\special{pa 1400 800}%
\special{dt 0.045}%
\put(13.7000,-2.6000){\makebox(0,0)[lb]{$q$}}%
%
\special{pn 8}%
\special{pa 600 800}%
\special{pa 1400 800}%
\special{dt 0.045}%
\special{pa 600 400}%
\special{pa 800 400}%
\special{dt 0.045}%
%
\special{pn 8}%
\special{sh 0.600}%
\special{pa 1302 800}%
\special{pa 1502 800}%
\special{pa 1752 600}%
\special{pa 1652 600}%
\special{pa 1652 600}%
\special{pa 1302 800}%
\special{ip}%
%
\special{pn 8}%
\special{sh 0.600}%
\special{pa 1302 800}%
\special{pa 1502 800}%
\special{pa 1152 600}%
\special{pa 1052 600}%
\special{pa 1052 600}%
\special{pa 1302 800}%
\special{ip}%
%
\special{pn 8}%
\special{sh 0.600}%
\special{pa 1052 600}%
\special{pa 928 500}%
\special{pa 978 500}%
\special{pa 1152 600}%
\special{pa 1152 600}%
\special{pa 1052 600}%
\special{ip}%
%
\special{pn 8}%
\special{sh 0.600}%
\special{pa 1052 600}%
\special{pa 1152 600}%
\special{pa 1278 500}%
\special{pa 1228 500}%
\special{pa 1228 500}%
\special{pa 1052 600}%
\special{ip}%
%
\special{pn 8}%
\special{sh 0.600}%
\special{pa 1652 600}%
\special{pa 1528 500}%
\special{pa 1578 500}%
\special{pa 1752 600}%
\special{pa 1752 600}%
\special{pa 1652 600}%
\special{ip}%
%
\special{pn 8}%
\special{sh 0.600}%
\special{pa 1652 600}%
\special{pa 1828 500}%
\special{pa 1878 500}%
\special{pa 1752 600}%
\special{pa 1752 600}%
\special{pa 1652 600}%
\special{ip}%
%
\special{pn 8}%
\special{sh 0.600}%
\special{pa 928 500}%
\special{pa 866 450}%
\special{pa 890 450}%
\special{pa 978 500}%
\special{pa 978 500}%
\special{pa 928 500}%
\special{ip}%
%
\special{pn 8}%
\special{sh 0.600}%
\special{pa 1228 500}%
\special{pa 1166 450}%
\special{pa 1190 450}%
\special{pa 1278 500}%
\special{pa 1278 500}%
\special{pa 1228 500}%
\special{ip}%
%
\special{pn 8}%
\special{sh 0.600}%
\special{pa 1528 500}%
\special{pa 1466 450}%
\special{pa 1490 450}%
\special{pa 1578 500}%
\special{pa 1578 500}%
\special{pa 1528 500}%
\special{ip}%
%
\special{pn 8}%
\special{sh 0.600}%
\special{pa 1826 500}%
\special{pa 1762 450}%
\special{pa 1788 450}%
\special{pa 1876 500}%
\special{pa 1876 500}%
\special{pa 1826 500}%
\special{ip}%
%
\special{pn 8}%
\special{sh 0.600}%
\special{pa 1878 500}%
\special{pa 1940 450}%
\special{pa 1916 450}%
\special{pa 1828 500}%
\special{pa 1828 500}%
\special{pa 1878 500}%
\special{ip}%
%
\special{pn 8}%
\special{sh 0.600}%
\special{pa 1578 500}%
\special{pa 1640 450}%
\special{pa 1616 450}%
\special{pa 1528 500}%
\special{pa 1528 500}%
\special{pa 1578 500}%
\special{ip}%
%
\special{pn 8}%
\special{sh 0.600}%
\special{pa 1278 500}%
\special{pa 1340 450}%
\special{pa 1316 450}%
\special{pa 1228 500}%
\special{pa 1228 500}%
\special{pa 1278 500}%
\special{ip}%
%
\special{pn 8}%
\special{sh 0.600}%
\special{pa 978 500}%
\special{pa 1040 450}%
\special{pa 1016 450}%
\special{pa 928 500}%
\special{pa 928 500}%
\special{pa 978 500}%
\special{ip}%
%
\special{pn 8}%
\special{sh 0.300}%
\special{pa 866 450}%
\special{pa 820 400}%
\special{pa 936 400}%
\special{pa 890 450}%
\special{pa 866 450}%
\special{pa 866 450}%
\special{pa 866 450}%
\special{ip}%
%
\special{pn 8}%
\special{sh 0.300}%
\special{pa 1016 450}%
\special{pa 970 400}%
\special{pa 1086 400}%
\special{pa 1040 450}%
\special{pa 1016 450}%
\special{pa 1016 450}%
\special{pa 1016 450}%
\special{ip}%
%
\special{pn 8}%
\special{sh 0.300}%
\special{pa 1166 450}%
\special{pa 1120 400}%
\special{pa 1236 400}%
\special{pa 1190 450}%
\special{pa 1166 450}%
\special{pa 1166 450}%
\special{pa 1166 450}%
\special{ip}%
%
\special{pn 8}%
\special{sh 0.300}%
\special{pa 1316 450}%
\special{pa 1270 400}%
\special{pa 1386 400}%
\special{pa 1340 450}%
\special{pa 1316 450}%
\special{pa 1316 450}%
\special{pa 1316 450}%
\special{ip}%
%
\special{pn 8}%
\special{sh 0.300}%
\special{pa 1466 450}%
\special{pa 1420 400}%
\special{pa 1536 400}%
\special{pa 1490 450}%
\special{pa 1466 450}%
\special{pa 1466 450}%
\special{pa 1466 450}%
\special{ip}%
%
\special{pn 8}%
\special{sh 0.300}%
\special{pa 1616 450}%
\special{pa 1570 400}%
\special{pa 1686 400}%
\special{pa 1640 450}%
\special{pa 1616 450}%
\special{pa 1616 450}%
\special{pa 1616 450}%
\special{ip}%
%
\special{pn 8}%
\special{sh 0.300}%
\special{pa 1766 450}%
\special{pa 1720 400}%
\special{pa 1836 400}%
\special{pa 1790 450}%
\special{pa 1766 450}%
\special{pa 1766 450}%
\special{pa 1766 450}%
\special{ip}%
%
\special{pn 8}%
\special{sh 0.300}%
\special{pa 1916 450}%
\special{pa 1870 400}%
\special{pa 1986 400}%
\special{pa 1940 450}%
\special{pa 1916 450}%
\special{pa 1916 450}%
\special{pa 1916 450}%
\special{ip}%
%
\special{pn 8}%
\special{pa 1400 300}%
\special{pa 2000 300}%
\special{fp}%
\special{sh 1}%
\special{pa 2000 300}%
\special{pa 1934 280}%
\special{pa 1948 300}%
\special{pa 1934 320}%
\special{pa 2000 300}%
\special{fp}%
\special{pa 1400 300}%
\special{pa 800 300}%
\special{fp}%
\special{sh 1}%
\special{pa 800 300}%
\special{pa 868 320}%
\special{pa 854 300}%
\special{pa 868 280}%
\special{pa 800 300}%
\special{fp}%
\end{picture}%
  \end{center}
 \vspace{-0.5em}
\caption{The basic object $\Psi(T;w,c,t,q)$.}
  \label{fig:Tree2}
 \end{minipage}
\end{figure}

Note that $\Psi(T;w,c,t,q)\subseteq [c-q/2,c+q/2]\times [2^{-(t+1)},2^{-t}]$ as in Fig. \ref{fig:Tree2}.
For $t\in\mathbb{N}$, and for ${\rm st}^A(t)=\min\{s:t\in A_s\}$ in the proof of Theorem \ref{thm:main:dendrite1}, let $l(t)\in 2^\mathbb{N}$ be the leftmost path of $T_{P,{\rm st}^A(t)}$.
If ${\rm st}^A(t)$ is undefined (i.e., $t\not\in A$) then $l(t)$ is also undefined.
For each $t\in\mathbb{N}$ we define $F(t)=\{\sigma\in 2^{<\mathbb{N}}:\sigma\subseteq l(t)\}$ if $l(t)$ is defined; $F(t)=T_P$ otherwise.
Then $\{F(t):t\in\mathbb{N}\}$ is a computable sequence of $\Pi^0_1$ subsets of $2^{<\mathbb{N}}$.
Furthermore, we have $\Psi(F(t))\cap([0,1]\times\{0\})\not=\emptyset$, since $F(t)$ has a path for every $t\in\mathbb{N}$.
For each $t\in\nn$, $w(t)$ is defined again as in the proof of Theorem \ref{thm:main:dendrite1}.
Now we define a $\Pi^0_1$ dendrite $H\subseteq\mathbb{R}^2$ as follows:
\begin{align*}
H^*_{t}&=\Psi(F(t);w(t),2^{-t},t,2^{-(t+2)})\\
H^0_{t}&=(\{2^{-t}-w(t)\}\cup\{2^{-t}+w(t)\})\times [0,2^{-(t+1)}]\\
H^{**}_{t}&=(2^{-t}-w(t),2^{-t}+w(t))\times\{2^{-(t+1)}\}\\
H^2_{t}&=(2^{-t}-w(t),2^{-t}+w(t))\times(-1,2^{-(t+1)})\\
H&=\Big(\bigcup_{t\in\mathbb{N}}\left(H^*_{t}\cup H^0_{t}\setminus(H^{**}_t\cup int H^*_t)\right)\Big)\cup\Big(([-1,1]\times \{0\})\setminus\bigcup_{t\in\mathbb{N}}H^2_{t}\Big).
\end{align*}
\end{construction}

Put $H_t=H^*_{t}\setminus(H^{**}_t\cup int H^*_t)$ (see Fig.\ \ref{fig:Tree_dendrite}).
We can also show that $H$ is a $\Pi^0_1$ dendrite in the same way as for Theorem \ref{thm:main:dendrite1}.

\begin{claim}
The $\Pi^0_1$ dendrite $H$ does not $\ast$-include a computable dendrite.
\end{claim}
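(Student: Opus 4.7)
The plan is to imitate the proof of Theorem \ref{thm:main:dendrite1}, letting Lemma \ref{lem:rite:4} (tree-immunity of $P$) play the role occupied there by the finiteness of the ramification points of a $\Pi^0_1$ tree. Assume for contradiction that $H$ $\ast$-includes a computable dendrite, so that for arbitrarily small $\varepsilon>0$ there exists a computable dendrite $G\subseteq H$ with $d_H(G,H)<\varepsilon$; the aim is to show that $A$ is computable, contradicting the choice of $A$. Introduce a rational open ball $\hat B_t$ centered at $\lrangle{2^{-t},2^{-t}}$ with radius $2^{-(t+3)}$, sitting near the top of the $t$-th rising, and let $B=\{t\in\mathbb{N}:\hat B_t\cap G=\emptyset\}$; since $G$ is $\Pi^0_1$, $B$ is c.e.

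I would then split on whether $A\cap B$ is infinite. If so, for each $t\in A\cap B$ the fat V-shape $H^*_t$ has its two branches joined in $H$ only at the top tip (because $H^{**}_t$ has been subtracted from $H$), so $G$ missing the tip ball confines $G$ to one side of the rising; infinitely many such constraints pin $G$ inside $[-1,0]\times\mathbb{R}$ or $[0,1]\times\mathbb{R}$, yielding $d_H(G,H)\geq 1$ and contradicting $\varepsilon<1$. Otherwise $A\cap B$ is finite, and it suffices to show $\mathbb{N}\setminus A\subseteq^\ast B$: together with the c.e.-ness of $B$ this yields $\mathbb{N}\setminus A$ c.e., and so $A$ computable, contradicting our choice. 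To produce this inclusion, given $t\notin A$ I would isolate $H^*_t$ inside a computable open box $N_t$ disjoint from the spine and from all other risings; then $G\cap\overline{N_t}$ is computable as an intersection of computable closed sets, and by uniqueness of arcs in the dendrite $H\supseteq G$ the arc in $G$ joining two computable anchor points (one near the root of the rising, one inside $N_t$ close to the top) is a computable subdendrite of $G$ lying inside $N_t\cap H$. Under the affine identification $\phi_t\colon H^*_t\to\Psi(T_P)$ this becomes a computable subdendrite of $\Psi(T_P)\subseteq\Psi(2^{<\mathbb{N}})$, and Lemma \ref{lem:rite:3} followed by tree-immunity (as in the proof of Lemma \ref{lem:rite:4}) forces the associated computable subtree of $T_P$ to be finite, bounding the subdendrite away from $y=0$ by some $2^{-l}$; pulling this through $\phi_t^{-1}$ gives a gap between $G\cap H^*_t$ and the top $y=2^{-t}$ of the strip, and for an appropriate choice of radius of $\hat B_t$ this forces $\hat B_t\cap G=\emptyset$ for cofinitely many $t\notin A$.

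The principal obstacle is this final extraction step. The naive candidate $G\cap H^*_t$ is only $\Pi^0_1$, since $H^*_t$ depends on the $\Pi^0_1$ (non-c.e.) tree $T_P$, so a genuine computable subdendrite has to be obtained via the computable neighborhood $N_t$ together with the arc-uniqueness trick above. A secondary delicate point is that the depth bound $2^{-l}$ produced by tree-immunity depends on the individual subdendrite, hence on $G$ and on $t$; the radius of $\hat B_t$ must therefore be chosen $t$-dependently so that a single uniform comparison still succeeds on all but finitely many $t\in\mathbb{N}\setminus A$, and the argument will exploit the $2^{-t}$ scaling of the rising to arrange this cleanly.
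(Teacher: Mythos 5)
Your overall plan is the right one and mirrors the paper's: work with a computable subdendrite $J\subseteq H$, form a c.e.\ set of indices $t$ for which $J$ is certified to miss the top $y=2^{-t}$ of the $t$-th rising, use Lemma~\ref{lem:rite:4} to show this set almost contains $\mathbb{N}\setminus A$, and play off the incomputability of $A$ against the connectedness of $J$. However, the test set $B=\{t:\hat B_t\cap G=\emptyset\}$ built from balls of radius $2^{-(t+3)}$ centered at $\lrangle{2^{-t},2^{-t}}$ does not do the job. To get $\mathbb{N}\setminus A\subseteq^{\ast}B$ you would need, for cofinitely many $t\notin A$, that $G$ stays at distance at least $2^{-(t+3)}$ from the line $y=2^{-t}$. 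But Lemma~\ref{lem:rite:4} yields only that the computable subdendrite obtained by restricting $G$ to the $t$-th rising (identified with a subdendrite of $\Psi(T_P)$) is \emph{disjoint} from $y=2^{-t}$; the quantitative depth $2^{-l}$ its proof produces comes from the finite computable subtree extracted via Lemma~\ref{lem:rite:3} and depends on $G$ itself, not only on $t$. A computable dendrite $G$ could approach $y=2^{-t}$ to within, say, $2^{-(t+100)}$ for every $t\notin A$ while never touching it; then $\hat B_t\cap G\neq\emptyset$ for all such $t$ and $B$ is useless. The ``secondary delicate point'' you flag is therefore the crux, and no $t$-dependent choice of radii can repair it, because the needed margin is $G$-dependent. (This is precisely the disanalogy with Theorem~\ref{thm:main:dendrite1}, where the conclusion for $t\notin A$ is the all-or-nothing statement that $T\cap D_t$ is just the single bottom point $\lrangle{2^{-t},0}$, so any ball avoiding that point works.) The paper's test set $C=\{t:J(t)\cap([3\cdot2^{-(t+2)},5\cdot2^{-(t+2)}]\times[2^{-t},1])=\emptyset\}$ has no margin problem: the compact rational rectangle has its lower edge \emph{on} the line $y=2^{-t}$, so ``$J(t)$ misses the line'' is literally ``$J(t)$ misses the rectangle'', and the condition is $\Sigma^0_1$ because $J(t)$ is $\Pi^0_1$.

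Two smaller points. First, your worry that $G\cap H^*_t$ is only $\Pi^0_1$ is legitimate, but the remedy is simply to intersect $J$ with the rational box $S(t)=[3\cdot2^{-(t+2)},5\cdot2^{-(t+2)}]\times[2^{-(t+1)},2^{-t}]$: since $H\cap S(t)=H_t$ and, below the top line, $H$ meets $\partial S(t)$ only at the rational root $\lrangle{2^{-t},2^{-(t+1)}}$, one has $J\cap S(t)=cl(J\cap int\,S(t))$, whence $J\cap S(t)$ is c.e.\ from the c.e.-ness of $J$. Second, the arc-between-anchors construction you propose would at best produce one computable arc inside the rising, not all of $G\cap S(t)$, so showing that \emph{that arc} avoids the top line would not establish that $G$ does.
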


Let $J$ be a computable subdendrite of $H$.
Put $S(t)=[3\cdot 2^{-(t+2)},5\cdot 2^{-(t+2)}]\times[2^{-(t+1)},2^{-t}]$.
Then, we note that $J(t)=J\cap S(t)$ is also a computable dendrite, since $H_t\subseteq S(t)$ and it is a dendrite.
However, by Lemma \ref{lem:rite:4}, if $t\not\in A$ then we have $J(t)\cap(\mathbb{R}\times\{2^{-t}\})=\emptyset$.
So we consider the following set:
\begin{align*}
C=\{t\in\mathbb{N}:J(t)\cap\left([3\cdot 2^{-(t+2)},5\cdot 2^{-(t+2)}]\times[2^{-t},1]\right)=\emptyset\}.
\end{align*}
Since $J(t)$ is uniformly computable in $t$, the set $C$ is clearly c.e., and we have $\mathbb{N}\setminus A\subseteq C$.
However, if $\mathbb{N}\setminus A=C$, then this contradicts the incomputability of $A$.
Thus, there must be infinitely many $t\in A$ such that $t$ is enumerated into $C$.
However, if $t\in A$ is enumerated into $C$, it {\em cuts} the dendrite $H$.
In other words, since $J\subseteq H$ is connected, either $J\subseteq [-1,5\cdot 2^{-(t+2)}]\times\mathbb{R}$ or $J\subseteq [3\cdot 2^{-(t+2)},1]\times\mathbb{R}$.
Hence we must have $d_H(J,H)\geq 1$.
\end{proof}

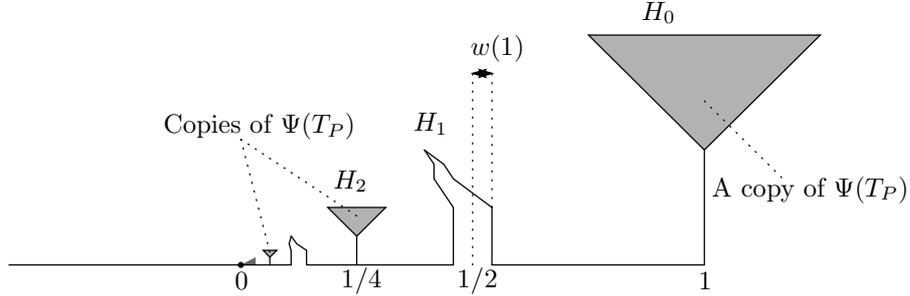
\begin{figure}[t]\centering
  \begin{center}
\unitlength 0.1in
\begin{picture}( 42.0000, 14.3300)(  2.0000,-14.5800)
%
\special{pn 8}%
\special{sh 0.600}%
\special{pa 1400 1456}%
\special{pa 1476 1456}%
\special{pa 1476 1416}%
\special{pa 1400 1456}%
\special{pa 1400 1456}%
\special{pa 1400 1456}%
\special{ip}%
%
\special{pn 8}%
\special{pa 1400 1456}%
\special{pa 200 1456}%
\special{fp}%
\put(34.7500,-1.9500){\makebox(0,0)[lb]{$H_0$}}%
\put(22.9500,-7.7500){\makebox(0,0)[lb]{$H_1$}}%
%
\special{pn 13}%
\special{sh 1}%
\special{ar 1400 1456 10 10 0  6.28318530717959E+0000}%
\put(13.7000,-15.8500){\makebox(0,0)[lb]{$0$}}%
\put(19.2000,-16.0500){\makebox(0,0)[lb]{$1/4$}}%
\put(37.7000,-15.8500){\makebox(0,0)[lb]{$1$}}%
\put(25.2000,-16.0500){\makebox(0,0)[lb]{$1/2$}}%
\put(18.8000,-10.7500){\makebox(0,0)[lb]{$H_2$}}%
\put(25.9000,-3.9500){\makebox(0,0)[lb]{$w(1)$}}%
%
\special{pn 8}%
\special{pa 3800 856}%
\special{pa 3800 1456}%
\special{fp}%
\special{pa 3800 1456}%
\special{pa 2700 1456}%
\special{fp}%
\special{pa 2700 1456}%
\special{pa 2700 1156}%
\special{fp}%
\special{pa 2500 1456}%
\special{pa 2500 1156}%
\special{fp}%
\special{pa 2500 1456}%
\special{pa 2000 1456}%
\special{fp}%
\special{pa 2000 1456}%
\special{pa 2000 1306}%
\special{fp}%
\special{pa 2000 1456}%
\special{pa 1740 1456}%
\special{fp}%
\special{pa 1740 1456}%
\special{pa 1740 1380}%
\special{fp}%
\special{pa 1660 1456}%
\special{pa 1660 1380}%
\special{fp}%
\special{pa 1660 1456}%
\special{pa 1400 1456}%
\special{fp}%
\special{pa 1550 1456}%
\special{pa 1550 1416}%
\special{fp}%
%
\special{pn 8}%
\special{sh 0.300}%
\special{pa 3800 856}%
\special{pa 3200 256}%
\special{pa 4400 256}%
\special{pa 4400 256}%
\special{pa 3800 856}%
\special{fp}%
%
\special{pn 8}%
\special{sh 0.300}%
\special{pa 2000 1306}%
\special{pa 1850 1156}%
\special{pa 2150 1156}%
\special{pa 2150 1156}%
\special{pa 2000 1306}%
\special{fp}%
%
\special{pn 8}%
\special{sh 0.300}%
\special{pa 1550 1416}%
\special{pa 1586 1380}%
\special{pa 1516 1380}%
\special{pa 1550 1416}%
\special{pa 1550 1416}%
\special{pa 1550 1416}%
\special{fp}%
%
\special{pn 8}%
\special{pa 2500 1156}%
\special{pa 2400 1006}%
\special{fp}%
\special{pa 2700 1156}%
\special{pa 2500 1006}%
\special{fp}%
\special{pa 2500 1006}%
\special{pa 2450 930}%
\special{fp}%
\special{pa 2400 1006}%
\special{pa 2400 930}%
\special{fp}%
\special{pa 2400 930}%
\special{pa 2350 856}%
\special{fp}%
\special{pa 2350 856}%
\special{pa 2450 930}%
\special{fp}%
%
\special{pn 8}%
\special{pa 1660 1380}%
\special{pa 1650 1346}%
\special{fp}%
\special{pa 1740 1380}%
\special{pa 1690 1346}%
\special{fp}%
\special{pa 1690 1346}%
\special{pa 1660 1306}%
\special{fp}%
\special{pa 1660 1306}%
\special{pa 1650 1346}%
\special{fp}%
%
\special{pn 8}%
\special{pa 2600 1456}%
\special{pa 2600 456}%
\special{dt 0.045}%
\special{pa 2700 1156}%
\special{pa 2700 456}%
\special{dt 0.045}%
%
\special{pn 8}%
\special{pa 2650 456}%
\special{pa 2600 456}%
\special{fp}%
\special{sh 1}%
\special{pa 2600 456}%
\special{pa 2668 476}%
\special{pa 2654 456}%
\special{pa 2668 436}%
\special{pa 2600 456}%
\special{fp}%
\special{pa 2650 456}%
\special{pa 2700 456}%
\special{fp}%
\special{sh 1}%
\special{pa 2700 456}%
\special{pa 2634 436}%
\special{pa 2648 456}%
\special{pa 2634 476}%
\special{pa 2700 456}%
\special{fp}%
%
\special{pn 8}%
\special{pa 3800 600}%
\special{pa 4200 1000}%
\special{dt 0.045}%
\put(38.4000,-11.3000){\makebox(0,0)[lb]{A copy of $\Psi(T_P)$}}%
%
\special{pn 8}%
\special{pa 2000 1200}%
\special{pa 1400 800}%
\special{dt 0.045}%
\special{pa 1400 800}%
\special{pa 1550 1390}%
\special{dt 0.045}%
\put(10.0000,-8.0000){\makebox(0,0)[lb]{Copies of $\Psi(T_P)$}}%
\end{picture}%
  \end{center}
 \vspace{-0.5em}
\caption{The dendrite $H$ for $0,2,4\not\in A$ and $1,3\in A$.}
  \label{fig:Tree_dendrite}
\end{figure}

\begin{cor}
There exists a nonempty $\Pi^0_1$ subset of $[0,1]^2$ which is contractible, locally contractible, and $*$-includes no connected computable closed subset.
\end{cor}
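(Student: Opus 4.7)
The plan is to show that the $\Pi^0_1$ dendrite $H$ constructed in the proof of Theorem \ref{thm:rite_alcom} (after an affine rescaling so that it sits inside $[0,1]^2$) already satisfies all three desired properties, and that the non-$\ast$-inclusion of a computable dendrite upgrades automatically to non-$\ast$-inclusion of any connected computable closed set.

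First, I would verify the topological conditions. Every dendrite is contractible: given any base point $p \in H$, the unique arc $\gamma_x \subseteq H$ from $x$ to $p$ depends continuously on $x$ (this is a standard fact for dendrites, following from hereditary unicoherence and local connectedness), so the map $F(x,t)$ which moves $x$ along $\gamma_x$ to $p$ at unit speed is a well-defined contraction. Local contractibility then follows because each point of a dendrite has arbitrarily small subdendrite neighborhoods, which are again contractible by the same argument. So $H$, being a $\Pi^0_1$ dendrite, is automatically contractible and locally contractible.

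The key remaining step is to strengthen the conclusion of Theorem \ref{thm:rite_alcom} from ``no computable subdendrite'' to ``no connected computable closed subset''. For this I would use the fundamental fact from continuum theory that every subcontinuum of a dendrite is itself a dendrite (Nadler). Suppose, towards a contradiction, that $H$ $\ast$-includes a connected computable closed subset; then for every $\varepsilon > 0$ there is a connected computable closed $J \subseteq H$ with $d_H(J,H) < \varepsilon$. Since $H$ is compact, $J$ is also compact, hence a subcontinuum of $H$, and therefore a dendrite. Thus $H$ would $\ast$-include a computable dendrite, contradicting the claim established in the proof of Theorem \ref{thm:rite_alcom}.

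I do not expect any real obstacle here: the geometric construction, the tree-immunity argument, and the cutting argument were all done in Theorem \ref{thm:rite_alcom}; the only extra input is the topological observation about subcontinua of dendrites. The mildest care is needed in two places: checking that a rescaled copy of $H$ still lies inside $[0,1]^2$ and remains a $\Pi^0_1$ set (this is trivial since the rescaling is a computable affine map), and spelling out continuous dependence of the contracting homotopy on the dendrite (standard, and requires no new effectivity).
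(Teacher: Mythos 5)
Your proposal is correct and follows the same route the paper implicitly takes: use the $\Pi^0_1$ dendrite $H$ of Theorem~\ref{thm:rite_alcom} (rescaled into $[0,1]^2$), invoke the standard facts that dendrites are contractible and locally contractible and that every subcontinuum of a dendrite is a dendrite, and thereby reduce the statement about connected computable closed subsets to the statement about computable subdendrites already established in the theorem. The one observation carrying the reduction—closed connected subsets of a compact dendrite are subcontinua, hence dendrites—is exactly the intended bridge, so there is no gap.
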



\section{Incomputability of Dendroids}

\begin{theorem}\label{thm:roid:alinc}
Not every computable planar dendroid $\ast$-includes a $\Pi^0_1$ dendrite.
\end{theorem}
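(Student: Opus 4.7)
My plan is to construct a computable planar dendroid $D$ that fails to $\ast$-include any $\Pi^0_1$ dendrite, by combining the ``waiting-time'' idea of Theorem \ref{thm:main:dendrite1} with a non-locally-connected augmentation that exploits the gap between dendroids (which need not be locally connected) and dendrites (which must be). The strategy is to make the accumulation structure so pervasive at each rising that any locally connected subset of $D$ is forced, on a region-by-region basis, into a ``tree-like'' form with only finitely many tines at each rising, so that Theorem \ref{thm:main:dendrite1}'s obstruction against $\Pi^0_1$ trees can be invoked.

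Concretely, fix an incomputable c.e.\ set $A$ with stage function $\mathrm{st}^A$ as in the proof of Theorem \ref{thm:main:dendrite1}. I start with the basic dendrite $\mathcal{B}$ of Example \ref{exa:dend:dend}, modified exactly as in Theorem \ref{thm:main:dendrite1} so that the rising at $2^{-t}$ becomes a U-shape of width $w(t) = 2^{-(2+\mathrm{st}^A(t))}$ when $t \in A$. To this skeleton I attach, at every rising (whether thin or U-shaped), a computable family of tines $\{2^{-t} + c\cdot 2^{-k}\} \times [0,2^{-t}]$ for $k \in \mathbb{N}$ (with $c$ chosen small enough to keep the tines from colliding with neighboring risings and, when $t\in A$, from crossing the interior of the U). The tines accumulate onto the main rising, creating a computable dendroid $D$; computability is verified by the same uniform-computability argument for $w(t)$ used in Theorem \ref{thm:main:dendrite1}, and dendroid-hood follows by a Hahn-Mazurkiewicz-style parametrization analogous to the claim in the proof of Theorem \ref{thm:main:dendrite1}.

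For the obstruction, suppose $E \subseteq D$ is a $\Pi^0_1$ dendrite with $d_H(E, D) < 1$. At every main rising, the tines accumulate onto the rising, so local connectivity of $E$ forbids $E$ from simultaneously containing a point of the main rising and infinitely many tines converging to it. Thus at each $t$, $E$ behaves like a finite ``fan'' --- only finitely many tines are used to approximate the rising region. An adaptation of Lemma \ref{lem:rite:3} extracts from the $\Pi^0_1$ dendrite $E$ a uniformly $\Pi^0_1$ description of the finite tine-set at each $t$, which effectively yields a $\Pi^0_1$ tree $T$ (in the continuum-theoretic sense) $\ast$-including $E$ inside the dendrite $D^*$ of Theorem \ref{thm:main:dendrite1}. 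Then the c.e./incomputability combinatorics of Theorem \ref{thm:main:dendrite1} --- the set $B = \{t : \hat{B}_t \cap T = \emptyset\}$ is c.e., coincides with $\mathbb{N}\setminus A$ up to a finite set, and then contradicts the incomputability of $A$ --- applies verbatim.

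The main obstacle is making the local-connectivity-to-tree reduction rigorous: a $\Pi^0_1$ dendrite may a priori have infinitely many ramification points, so the ``trees have finitely many ramifications'' step of Theorem \ref{thm:main:dendrite1} cannot be used directly. The accumulation of tines at each rising is what compensates for this --- it enforces local finiteness of ramification per rising, not globally --- and the main technical work is to verify that such local finiteness, combined with $\Pi^0_1$-ness of $E$, is enough to extract the required computable (or $\Pi^0_1$) subtree $T$ via the reduction property of $\Sigma^0_1$, in the same spirit as Lemma \ref{lem:rite:3}.
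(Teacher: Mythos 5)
The construction you propose does not produce the required obstruction: the dendroid $D$ you build actually \emph{does} $\ast$-include a computable dendrite. Write $D^*$ for the dendrite of Theorem \ref{thm:main:dendrite1} and $D=D^*\cup\bigcup_{t,k}T_{t,k}$ for your decoration, where $T_{t,k}=\{2^{-t}+c_t\cdot 2^{-k}\}\times[0,2^{-t}]$ is the $k$-th tine at the $t$-th rising (the offset $c$ must in any case depend on $t$ to avoid collisions with the neighboring rising). For any $K\in\mathbb{N}$, put $E_K=D^*\cup\bigcup_{t}\bigcup_{k\le K}T_{t,k}$. Since only $K+1$ tines are attached per rising and they stay a positive distance $c_t\cdot 2^{-K}$ from the main rising, $E_K$ is locally connected, hence a computable planar dendrite contained in $D$, and $d_H(E_K,D)\le\sup_t c_t\cdot 2^{-K}\to 0$ as $K\to\infty$. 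So $D$ $\ast$-includes $\Pi^0_1$ dendrites, contradicting what you want to prove.

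The gap is in the step that upgrades ``finitely many tines at each $t$'' to a $\Pi^0_1$ tree. Unlike a tree, a dendrite may have infinitely many ramification points; $E_K$ has one at the base of every rising, so the set $B=\{t:\hat{B}_t\cap E_K=\emptyset\}$ used in the proof of Theorem \ref{thm:main:dendrite1} is empty rather than cofinitely equal to $\mathbb{N}\setminus A$, and the c.e.\ combinatorics that works for trees produces no contradiction here. Your accumulating tines constrain ramification only \emph{locally} within a single rising; nothing forces a $\Pi^0_1$ dendrite to vacate almost all risings. The paper instead replaces each rising by a harmonic comb $K_t$ whose risings accumulate onto a spine, so that local connectivity forces any subdendrite $R$ to miss the rising tips $K^{1*}_{t,u}$ for cofinitely many $u$ at \emph{every} $t$. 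This yields a uniformly c.e.\ family $\{U_t\}_{t\in\mathbb{N}}$ of cofinite c.e.\ sets, and the separate Lemma \ref{lem:comp_roid} provides a limit computable $f$ (a maximal-set-style diagonalization) with $f(t)\in U_t$ for almost all $t$; the construction has pre-arranged, through the stage approximations $f_s$, that the rising at index $f(t)$ is fat enough to disconnect $K$ when its tip is avoided. Diagonalizing against all uniformly c.e.\ sequences of cofinite c.e.\ sets --- not against a single incomputable c.e.\ set --- is the essential ingredient your proposal lacks.
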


\begin{lemma}\label{lem:comp_roid}
There exists a limit computable function $f$ such that, for every uniformly c.e.\ sequence $\{U_n:n\in\mathbb{N}\}$ of cofinite c.e.\ sets, we have $f(n)\in U_n$ for almost all $n\in\mathbb{N}$.
\end{lemma}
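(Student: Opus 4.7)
The plan is to build $f$ as a $\Delta^0_2$ function, which is limit computable by Shoenfield's limit lemma. Fix an effective enumeration of uniformly c.e.\ sequences of subsets of $\mathbb{N}$, writing $\{V^e_n\}_{n\in\mathbb{N}}$ for the $e$-th such sequence and $V^e_{n,s}$ for its $s$-stage approximation; every uniformly c.e.\ sequence of cofinite c.e.\ sets equals $\{V^e_n\}_n$ for some index $e$. The $e$-th requirement $R_e$ asks that $f(n)\in V^e_n$ for almost every $n$, provided $\{V^e_n\}_n$ is pointwise cofinite (``admissible'').

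For each $n$, I will use the oracle $0'$ to compute $f(n)$ by a priority construction: process requirements $R_e$ for $e=0,1,\ldots,n-1$ in order and maintain a working set $J\subseteq\mathbb{N}$ initialized to $\mathbb{N}$. I commit to $R_e$ by updating $J:=J\cap V^e_n$ whenever (a) $V^e_n$ passes a $0'$-checkable ``cofinite-looking'' test---for instance, the complement of $V^e_{n,s}$ within some effectively chosen window $[0,w(s)]$ has small size---and (b) the resulting intersection is non-empty. Both conditions reduce to $\Sigma^0_1$-queries answerable by $0'$. Finally, set $f(n):=\min J$, which is again $0'$-computable.

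To verify the desired property, fix any admissible $e^*$, so each $V^{e^*}_n$ is cofinite. For all sufficiently large $n>e^*$, the cofiniteness test is eventually met by $V^{e^*}_n$; moreover, since the rule only commits to sets exhibiting cofinite-like behavior, the working set $J$ remains essentially cofinite at every step before $e^*$, so $J\cap V^{e^*}_n$ is non-empty and $e^*$ is committed. Hence $f(n)\in V^{e^*}_n$ for all but finitely many $n$, completing the correctness argument.

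\textbf{The main obstacle} is calibrating the cofinite-looking test: it must be loose enough that every admissible sequence is eventually admitted (the complement sizes of an admissible $\{V^{e^*}_n\}_n$ may grow as an arbitrary $\Delta^0_2$ function of $n$), yet strict enough to reject non-admissible sequences before they trap $J$ in a sparse set missing an admissible cofinite $V^{e^*}_n$. The chief technical nuisance is the noise from not-yet-enumerated elements, which can inflate $\{0,\ldots,s\}\setminus V^e_{n,s}$ well above the true complement size of $V^e_n$. I anticipate resolving this by choosing the window $w(s)$ to grow slowly enough for the enumeration to catch up (e.g., logarithmically) while letting the admissibility threshold grow fast enough to cover all admissible sequences, together with a finite-injury analysis showing each $R_{e^*}$ is injured only finitely often by lower-priority non-admissible commitments.
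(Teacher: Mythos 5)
Your proposal has the right general shape (build $f$ as a $\Delta^0_2$ function, process requirements by priority, shrink a working set $J$, output $\min J$), and your greedy commitment rule, once the admission test is deleted, computes exactly the lexicographically maximal ``$n$-state'' $\max_y\{e<n:y\in V^e_n\}$ that the paper uses. But the proposal stops precisely at the crux you flag, and I do not think the ``cofinite-looking test'' can be calibrated in the form you sketch. The property ``$W$ is cofinite'' is $\Sigma^0_3$-complete, so it is not settled by any $0'$-answerable query, and the sliding-window heuristic fails in both directions: the true complement sizes of an admissible sequence $\{U_n\}$ can grow with $n$ faster than any computable bound, so any effective threshold will wrongly reject $U_n$ for infinitely many $n$; conversely a non-cofinite $(V^e)_n$ with a slowly thinning complement looks cofinite on every window $[0,w(s)]$ for small $s$, with no $\Sigma^0_1$-detectable signal telling you when to distrust it. And some test is indispensable: take $V^0_n=\{n\}$ (uniformly c.e., never cofinite) and the admissible $U_n=\mathbb{N}\setminus\{n\}$; bare greedy with the nonemptiness check commits $e=0$, locks $J=\{n\}$, and gives $f(n)=n\notin U_n$ for \emph{every} $n$. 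So the obstacle you anticipate is the whole content of the lemma, and the finite-injury analysis you hope will close it is not sketched.

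The paper sidesteps the test by normalizing the enumeration rather than filtering it. It enumerates only the uniformly c.e.\ \emph{non-increasing} sequences $V_e$ satisfying $\min(V_e)_n\geq n$, and replaces a given cofinite sequence $\{U_n\}$ by $\{(\bigcap_{m\leq n}U_m)\cap[n,\infty)\}$, which lies in that class and refines each $U_n$. Against this restricted class the construction is the Friedberg--Yates maximal-set argument: $f(e)$ is the least $y$ realizing the maximal $e$-state $\sigma(e)=\max_z\{i\leq e:z\in(V_i)_e\}$. The two normalizing constraints do the work your test was meant to do: non-increasingness plus $\min(V_j)_n\geq n$ forces any $(V_j)_n$ that is ever finite and nonempty to become empty for all large $n$ (its maximum is non-increasing while its minimum must exceed $n$), so problematic indices drop out of the $e$-state computation on their own instead of having to be detected; and non-increasingness makes the restriction of $\sigma(e)$ to any fixed initial segment $\{0,\dots,n\}$ lexicographically non-increasing in $e$, which yields the stabilization $\lim_e\sigma(e)(j)$ and then $i\in\sigma(e)$ for almost all $e$ for the index $i$ of the normalized cofinite sequence. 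That stabilization argument replaces your projected finite-injury argument, and the normalization is the step your proposal is missing.
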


\begin{proof}\upshape
Let $\{V_e:e\in\mathbb{N}\}$ be an effective enumeration of all uniformly c.e.\ non-increasing sequences $\{U_n:n\in\mathbb{N}\}$ of c.e.\ sets such that $\min U_n\geq n$, where $(V_e)_n=U_n=\{x\in\mathbb{N}:(n,x)\in V_e\}$.
{\em The $e$-state of $y$} is defined by $\sigma(e,y)=\{i\leq e:y\in(V_i)_e\}$, and {\em the maximal $e$-state} is defined by $\sigma(e)=\max_z\sigma(e,z)$.
The construction of $f:\nn\to\nn$ is to maximize the $e$-state.
For each $e\in\nn$, $f(e)$ chooses the least $y\in\nn$ having the maximal $e$-state $\sigma(e,y)=\sigma(e)$.
Since $\{\sigma(e,y):e,y\in\mathbb{N}\}$ is uniformly c.e., and $\sigma(e,y)\in 2^e$, the function $e\mapsto\sigma(e)=\max_z\sigma(e,z)$ is total limit computable.
Thus, $f$ is limit computable.
It is easy to see that $\lim_e\sigma(e)(n)$ exists for each $n\in\mathbb{N}$.
Let $U=\{U_n:n\in\mathbb{N}\}$ be a uniformly c.e.\ sequence of cofinite c.e.\ sets.
Then $V=\{\bigcap_{m\leq n}U_m:n\in\mathbb{N}\}$ is a uniformly c.e.\ non-increasing sequence of cofinite c.e.\ sets.
Thus, $V_i=V$ for some index $i$.
Then $i\in\sigma(e,y)$ for almost all $e,y\in\mathbb{N}$.
This ensures that $i\in\sigma(e)$ for almost all $e\in\mathbb{N}$ by our assumption $\min U_n\geq n$.
Hence we have $f(n)\in U_n$ for almost all $n\in\mathbb{N}$.
\end{proof}

\begin{remark}
The proof of Lemma \ref{lem:comp_roid} is similar to the standard construction of a maximal c.e.\ set (see Soare \cite{Soa}).
Recall that the principal function of the complement of a maximal c.e.\ set is {\em dominant}, i.e., it dominates all total computable functions.
The limit computable function $f$ in Lemma \ref{lem:comp_roid} is also dominant.
Indeed, for any total computable function $g$, if we set $U^g_n=\{y\in\mathbb{N}:y\geq g(n)\}$ then $\{U^g_n:n\in\mathbb{N}\}$ is a uniformly c.e.\ sequence of cofinite c.e.\ sets, and if $f(n)\in U^g_n$ holds then we have $f(n)\geq g(n)$.
\end{remark}

\begin{proof}[Proof of Theorem \ref{thm:roid:alinc}]\upshape
Pick a limit computable function $f=\lim_sf_s$ in Lemma \ref{lem:comp_roid}.
For every $t,u\in\mathbb{N}$, put $v(t,u)=2^{-s}$ for the least $s$ such that $f_s(t)=u$ if such $s$ exists; $v(t,u)=0$ otherwise.
Since $\{f_s:s\in\mathbb{N}\}$ is uniformly computable, $v:\mathbb{N}^2\to\mathbb{R}$ is computable.

\begin{construction}\upshape
For each $t\in\mathbb{N}$, {\em the center position of the $u$-th rising of the $t$-th comb} is defined as $c_*(t,u)=2^{-(2t+1)}+2^{-(2t+u+1)}$, and {\em the width of the $u$-th rising of the $t$-th comb} is defined as $v_*(t,u)=v(t,u)\cdot 2^{-(2t+u+3)}$.
Then, we define {\em the $t$-th harmonic comb} $K_{t}$ for each $t\in\mathbb{N}$ as follows:
\begin{align*}
K^*_{t}&=\{2^{-(2t+1)}\}\times [0,2^{-t}]\\
K^0_{t,u}&=\{c_*(t,u)-v_*(t,u),c_*(t,u)+v_*(t,u)\}\times [0,2^{-t}]\\
K^1_{t,u}&=[c_*(t,u)-v_*(t,u),c_*(t,u)+v_*(t,u)]\times\{2^{-t}\}\\
K^2_{t,u}&=(c_*(t,u)-v_*(t,u),c_*(t,u)+v_*(t,u))\times(-1,2^{-t})\\
K_{t}&=\left(K^*_{t}\cup\bigcup_{i<2}\bigcup_{u\in\mathbb{N}}K^i_{t,u}\right)\cup\left(([2^{-(2t+1)},2^{-2t}]\times \{0\})\setminus\bigcup_{u\in\mathbb{N}}K^2_{t,u}\right).
\end{align*}
Note that $K_{t}$ is homeomorphic to the harmonic comb $\mathcal{H}$ for each $t\in\mathbb{N}$.
This is because, for fixed $t\in\mathbb{N}$, since $\lim_sf_s(t)$ exists we have $v(t,u)=0$ for almost all $u\in\mathbb{N}$.
Then the desired computable dendroid is defined as follows.
\[K=([-1,0]\times\{0\})\cup\bigcup_{t\in\mathbb{N}}\left(\big([2^{-(2t+2)},2^{-(2t+1)}]\times\{0\}\big)\cup K_{t}\right).\]
\end{construction}

\begin{figure}[t]\centering
 \begin{minipage}{0.48\hsize}
  \begin{center}
\unitlength 0.1in
\begin{picture}( 18.0000,  8.2300)(  2.0000,-10.8300)
%
\special{pn 8}%
\special{pa 800 1080}%
\special{pa 200 1080}%
\special{fp}%
%
\special{pn 13}%
\special{sh 1}%
\special{ar 800 1080 10 10 0  6.28318530717959E+0000}%
\put(7.8000,-12.1000){\makebox(0,0)[lb]{$0$}}%
\put(10.2000,-12.3000){\makebox(0,0)[lb]{$1/4$}}%
\put(19.8000,-12.1000){\makebox(0,0)[lb]{$1$}}%
\put(13.2000,-12.3000){\makebox(0,0)[lb]{$1/2$}}%
%
\special{pn 8}%
\special{pa 2000 1080}%
\special{pa 1400 1080}%
\special{fp}%
\special{pa 2000 1080}%
\special{pa 2000 480}%
\special{fp}%
\special{pa 1400 1080}%
\special{pa 1400 480}%
\special{fp}%
\special{pa 1700 1080}%
\special{pa 1700 480}%
\special{fp}%
\special{pa 1550 1080}%
\special{pa 1550 480}%
\special{fp}%
\special{pa 1476 1080}%
\special{pa 1476 480}%
\special{fp}%
\special{pa 1438 1080}%
\special{pa 1438 480}%
\special{fp}%
%
\special{pn 8}%
\special{pa 1420 1080}%
\special{pa 1420 480}%
\special{fp}%
%
\special{pn 8}%
\special{sh 0.300}%
\special{pa 1412 1080}%
\special{pa 1400 1080}%
\special{pa 1400 480}%
\special{pa 1412 480}%
\special{pa 1412 1080}%
\special{ip}%
%
\special{pn 8}%
\special{pa 1100 1080}%
\special{pa 950 1080}%
\special{fp}%
\special{pa 1100 1080}%
\special{pa 1100 780}%
\special{fp}%
\special{pa 950 1080}%
\special{pa 950 780}%
\special{fp}%
\special{pa 1026 1080}%
\special{pa 1026 780}%
\special{fp}%
\special{pa 988 1080}%
\special{pa 988 780}%
\special{fp}%
\special{pa 970 1080}%
\special{pa 970 780}%
\special{fp}%
\special{pa 960 1080}%
\special{pa 960 780}%
\special{fp}%
%
\special{pn 8}%
\special{pa 956 1080}%
\special{pa 956 780}%
\special{fp}%
%
\special{pn 8}%
\special{sh 0.300}%
\special{pa 954 1080}%
\special{pa 950 1080}%
\special{pa 950 780}%
\special{pa 954 780}%
\special{pa 954 1080}%
\special{ip}%
%
\special{pn 8}%
\special{sh 0.300}%
\special{pa 876 1080}%
\special{pa 838 1080}%
\special{pa 838 930}%
\special{pa 876 930}%
\special{pa 876 1080}%
\special{fp}%
%
\special{pn 8}%
\special{sh 0.300}%
\special{pa 800 1080}%
\special{pa 818 1006}%
\special{pa 818 1080}%
\special{pa 818 1080}%
\special{pa 800 1080}%
\special{ip}%
%
\special{pn 8}%
\special{pa 800 1080}%
\special{pa 1400 1080}%
\special{fp}%
\put(15.0000,-4.3000){\makebox(0,0)[lb]{$K_0$}}%
\put(9.8000,-7.3000){\makebox(0,0)[lb]{$K_1$}}%
\put(7.8000,-8.8000){\makebox(0,0)[lb]{$K_2$}}%
\end{picture}%
  \end{center}
 \vspace{-0.5em}
\caption{The dendroid $K$.}
  \label{fig:Dendroid1}
 \end{minipage}
 \begin{minipage}{0.48\hsize}
  \begin{center}
\unitlength 0.1in
\begin{picture}( 17.4000, 14.2000)(  9.6000,-16.0000)
%
\special{pn 8}%
\special{pa 1240 1600}%
\special{pa 1240 400}%
\special{fp}%
%
\special{pn 8}%
\special{sh 0.300}%
\special{pa 1224 1600}%
\special{pa 1200 1600}%
\special{pa 1200 400}%
\special{pa 1224 400}%
\special{pa 1224 1600}%
\special{ip}%
%
\special{pn 8}%
\special{pa 1460 1600}%
\special{pa 1460 400}%
\special{fp}%
\special{pa 1460 400}%
\special{pa 1540 400}%
\special{fp}%
\special{pa 1540 400}%
\special{pa 1540 1600}%
\special{fp}%
\special{pa 1540 1600}%
\special{pa 2100 1600}%
\special{fp}%
\special{pa 2100 1600}%
\special{pa 2100 400}%
\special{fp}%
\special{pa 2100 400}%
\special{pa 2700 400}%
\special{fp}%
\special{pa 2700 400}%
\special{pa 2700 1600}%
\special{fp}%
\special{pa 1800 1600}%
\special{pa 1800 400}%
\special{fp}%
\special{pa 1460 1600}%
\special{pa 1200 1600}%
\special{fp}%
\special{pa 1200 1600}%
\special{pa 1200 400}%
\special{fp}%
%
\special{pn 8}%
\special{pa 1350 400}%
\special{pa 1350 1600}%
\special{fp}%
\special{pa 1276 1600}%
\special{pa 1276 400}%
\special{fp}%
\put(23.4000,-3.5000){\makebox(0,0)[lb]{$K_{t,0}$}}%
\put(17.5000,-3.5000){\makebox(0,0)[lb]{$K_{t,1}$}}%
\put(14.3000,-3.5000){\makebox(0,0)[lb]{$K_{t,2}$}}%
%
\special{pn 8}%
\special{pa 2400 400}%
\special{pa 2400 1600}%
\special{dt 0.045}%
\put(23.0000,-17.5000){\makebox(0,0)[lb]{$2^{-2t}$}}%
\put(9.6000,-17.5000){\makebox(0,0)[lb]{$2^{-(2t+1)}$}}%
\end{picture}%
  \end{center}
 \vspace{-0.5em}
\caption{The harmonic comb $K_t$ for $f_0(t)=0$, $f_1(t)=0$, $f_2(t)=2$, $\dots$}
  \label{fig:Dendroid2}
 \end{minipage}
\end{figure}

\begin{claim}
The set $K$ is a computable dendroid.
\end{claim}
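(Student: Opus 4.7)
The plan is to verify in sequence that each $K_t$ is homeomorphic to the harmonic comb $\mathcal{H}$, that $K$ assembles into a dendroid, and that $K$ is computable as a closed set.

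First, I would show that each $K_t$ is homeomorphic to $\mathcal{H}$, which is essentially flagged in the construction. The key observation is that because $f = \lim_s f_s$ exists pointwise, for every fixed $t$ the sequence $\{f_s(t)\}_s$ is eventually constant, hence takes only finitely many distinct values. Consequently the set $\{u : v(t,u) > 0\}$ is finite, so among the spikes $K^0_{t,u} \cup K^1_{t,u}$ of $K_t$ only finitely many are ``fat'' $\Pi$-shapes (those with $v(t,u) > 0$) while all remaining ones are single vertical segments at $c_*(t,u)$ accumulating at the main spike $K^*_t$. The base $[2^{-(2t+1)}, 2^{-2t}] \times \{0\}$ has only finitely many open sub-intervals removed via $K^2_{t,u}$, each bridged over by its corresponding $\Pi$-shape with endpoints in the base; the base together with these bridges is an arc homeomorphic to the original interval, whence $K_t \cong \mathcal{H}$.

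Second, I would verify that $K$ is a dendroid. Compactness: each $K_t$ sits inside $[2^{-(2t+1)}, 2^{-2t}] \times [0, 2^{-t}]$, a box shrinking to $\langle 0,0 \rangle$, so $K$ is closed and bounded in $\mathbb{R}^2$. Arcwise connectedness: the ``modified grip'' consisting of $[-1,1]\times\{0\}$ with each cut-out open interval replaced by its $\Pi$-bridge is an arc homeomorphic to $[-1,1]$, and every point of $K$ lies either on this arc or on a vertical line spike attached to it at a single point. Hereditary unicoherence follows from the fact that $K$ contains no Jordan curve: each vertical line spike meets the modified grip at exactly one point with no return path, and each $\Pi$-shape meets the rest of $K$ only at its two base endpoints already bridged uniquely by the $\Pi$-shape itself, so no loop is possible, and any two points are joined by a unique arc.

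Third, I would verify computability. Since $\{f_s\}_s$ is uniformly computable, the function $(t,u) \mapsto v(t,u)$ is computable as a real (approximated within $2^{-k}$ by scanning $f_s(t)$ for $s \le k$, since undetected later hits can contribute at most $2^{-k}$), and hence so are $c_*(t,u)$ and $v_*(t,u)$. Given a rational open ball $\rho_e$, only finitely many $K_t$ lie within $\mathrm{diam}(\rho_e)$ of $\rho_e$ by the shrinking bounding boxes, and within each such $K_t$ the spikes with $u$ beyond some computable threshold are uniformly close to the main spike $K^*_t$ (since $|c_*(t,u) - 2^{-(2t+1)}| \le 2^{-(2t+u+1)}$ and $v_*(t,u) \le 2^{-(2t+u+3)}$), so the tail can be tested by checking a small neighborhood of the computable segment $K^*_t$; the finitely many remaining pieces can be tested directly from the computable parameters. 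This gives both the c.e.\ and $\Pi^0_1$ enumerations.

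The main obstacle is the apparent tension between the computability of $K$ and the mere limit-computability of $f$: committing early to $v(t,u) = 0$ is unsafe, since $f_s(t) = u$ may occur at a later stage $s$. This is resolved geometrically: the widths $v_*(t,u)$ are a priori bounded by the rapidly decreasing sequence $2^{-(2t+u+3)}$, so any imprecision in $v(t,u)$ corresponds to a geometrically small perturbation of $K$, which can be absorbed at any prescribed precision when deciding intersection with a rational ball.
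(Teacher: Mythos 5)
Your proposal follows essentially the same route as the paper's proof: both rely on the key observation that for each fixed $t$ the set $\{u : v(t,u)>0\}$ is finite (because $f=\lim_s f_s$ exists pointwise), both establish arcwise connectedness by exhibiting an arc playing the role of a ``modified grip'' (the paper calls it $K^-$, built from the $K^-_t$, and proves it is an arc via local connectedness as the boxes $S(t)$ shrink), and both dispose of the hereditary unicoherence and computability conditions briefly.

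One small caution on the hereditary unicoherence step: the implication ``no Jordan curve $\Rightarrow$ hereditarily unicoherent'' is not valid for arcwise connected continua in general — one also needs hereditary decomposability (an arcwise connected metric continuum is a dendroid iff it is uniquely arcwise connected \emph{and} hereditarily decomposable). For $K$, which is a countable union of line segments, hereditary decomposability is automatic, so your argument does go through, but it would be cleaner to say this explicitly or to argue, as the paper does, directly from the structure (each $K_t$ is a homeomorphic copy of the hereditarily unicoherent $\mathcal{H}$, glued along arcs without creating cycles). On the other hand, you give more detail than the paper on why $K$ is a computable closed set — the paper simply asserts ``Clearly $K$ is a computable closed set'' — and your observation that $v$ is computable because $v(t,u)\le 2^{-(k+1)}$ whenever no $s\le k$ has $f_s(t)=u$ is exactly the right justification. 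The phrasing ``only finitely many $K_t$ lie within $\mathrm{diam}(\rho_e)$ of $\rho_e$'' is imprecise (it fails when $\rho_e$ is near the origin), but the intended dichotomy — handle the accumulation point $\lrangle{0,0}$ separately, then check finitely many combs, with tail spikes absorbed into a computable neighborhood of $K^*_t$ — is sound.
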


Clearly $K$ is a computable closed set.
To show that $K$ is pathwise connected, we consider the following subcontinuum $K_{t}^-$, {\em the grip of the comb $K_{t,m}$}, for each $t\in\mathbb{N}$.
\begin{align*}
K^-_{t}&=(\bigcup_{i<2}\bigcup_{v(t,u)>0}K^i_{t,u})\cup\big(([2^{-(2t+1)},2^{-2t}]\times \{0\})\setminus\bigcup_{v(t,u)>0}K^2_{t,u}\big).
\end{align*}
Then $K^-=([-1,0]\times\{0\})\cup\bigcup_{t\in\mathbb{N}}\left(([2^{-(2t+2)},2^{-(2t+1)}]\times\{0\})\cup K^-_{t}\right)$ has no ramification points.
We claim that $K^-$ is connected and $K^-$ is even an arc.
To show this claim, we first observe that $K^-_{t}$ is an arc for any $t\in\mathbb{N}$, since $v(t,u)>0$ occurs for finitely many $u\in\mathbb{N}$.
Moreover $K^-_{t}\subseteq S(t)$, and $\lim_t{\rm diam}(S(t))=0$ holds.
Therefore, we see that $K^-$ is locally connected and, hence, an arc.
For points $p,q\in K$, if $p,q\in K^-$ then $p$ and $q$ are connected by a subarc of $K^-$.
In the case $p\in K\setminus K^-$, the point $p$ lies on $K^0_{t,u}$ for some $t,u$ such that $v(t,u)=0$.
If $q\in K^-$ then there is a subarc $A\subseteq K^-$ (one of whose endpoints must be $\lrangle{c_*(t,u),0}$) such that $A\cup K^0_{t,u}$ is an arc containing $p$ and $q$.
In the case $q\in K\setminus K^-$, similarly we can connect $p$ and $q$ by an arc in $K$.
Hence, $K$ is pathwise connected.
$K$ is hereditarily unicoherent, since the harmonic comb is hereditarily unicoherent.
Thus, $K$ is a dendroid.

\begin{claim}
The computable dendroid $K$ does not $\ast$-include a $\Pi^0_1$ dendrite.
\end{claim}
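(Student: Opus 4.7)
The plan is to suppose for contradiction that there is a $\Pi^0_1$ dendrite $L \subseteq K$ with Hausdorff distance to $K$ less than $1/8$; since refuting $\ast$-inclusion only requires producing one $\varepsilon > 0$ for which no such $L$ exists, $\varepsilon = 1/8$ will suffice. For each $(t, u) \in \mathbb{N}^2$, I let $B_{t, u}$ be the rational open box of half-side $2^{-(2t+u+3)}$ centered at $(c_*(t, u), 2^{-t})$. This box covers the entire top segment $[c_*(t,u) - v_*(t,u), c_*(t,u) + v_*(t,u)] \times \{2^{-t}\}$ of the thick rising at $(t,u)$ whenever it exists (since $v_*(t,u) \leq 2^{-(2t+u+3)}$ always), yet is disjoint from the columns of the two neighboring risings at positions $u \pm 1$ (whose centers lie at distance at least $2^{-(2t+u+2)}$ from $c_*(t,u)$). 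Setting $U_t = \{u : B_{t, u} \cap L = \emptyset\}$, the sequence $\{U_t\}_{t \in \mathbb{N}}$ is uniformly c.e.\ because $L$ is $\Pi^0_1$.

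Next I argue that each $U_t$ is cofinite. Otherwise $L$ meets $B_{t, u}$ for infinitely many $u$; the witnessing points of $L$ then accumulate at the top of the main rising $K^*_t$, namely $(2^{-(2t+1)}, 2^{-t})$, which by closedness lies in $L$. But every $L$-neighborhood of this accumulation point thereafter contains infinitely many disjoint near-top pieces of distinct risings at distinct $x$-coordinates, violating local connectedness of $L$ at the limit and contradicting $L$ being a dendrite.

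Applying Lemma \ref{lem:comp_roid}, $f(t) \in U_t$ for all but finitely many $t$. Fix any $t \geq 1$ with this property. Then $L \cap B_{t, f(t)} = \emptyset$, so the top-center point $(c_*(t, f(t)), 2^{-t})$ of the thick rising at $(t, f(t))$ (which genuinely exists because $v(t, f(t)) > 0$) is not in $L$. The critical topological observation is that this single point is precisely the vertical slice $K \cap \{x = c_*(t, f(t))\}$: at $y = 0$ the grip has been removed here by $K^2_{t, f(t)}$; the two legs of the thick rising lie at $x = c_*(t, f(t)) \pm v_*(t, f(t)) \neq c_*(t, f(t))$; only the top segment of the thick rising meets this $x$-value, and only at the single point above; and every other comb, bridge, and the main line of $K$ lies in an $x$-interval disjoint from $c_*(t, f(t))$. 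Hence, by the intermediate value theorem applied to the $x$-coordinate of any arc in $L \subseteq K$, no arc in $L$ can take $x$-values on both sides of $c_*(t, f(t))$ without passing through this missing slice point.

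Since $L$ is arcwise connected, this forces either $L \subseteq \{x < c_*(t, f(t))\}$ or $L \subseteq \{x > c_*(t, f(t))\}$. In the first case, $L$'s $x$-coordinates are all strictly below $c_*(t, f(t)) \leq 2^{-2t} \leq 1/4$ (using $t \geq 1$), so the grip point $(1/2, 0) \in K_0 \subseteq K$ is at distance strictly greater than $1/4$ from $L$. In the second, $L$'s $x$-coordinates all exceed $c_*(t, f(t)) > 0$, so the point $(-1, 0) \in K$ on the main line is at distance strictly greater than $1$ from $L$. Either way $d_H(L, K) > 1/4 > 1/8$, contradicting the hypothesis. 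The main subtlety is the cofiniteness of $U_t$, which crucially relies on the fact that every rising of $K_t$ has the same full height $2^{-t}$ rather than shrinking toward the main rising.
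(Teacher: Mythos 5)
Your proof follows the same strategy as the paper's: put rational test sets over the tops of the risings, show $U_t$ is uniformly c.e.\ and cofinite (your accumulation-point argument for failure of local connectedness at $\lrangle{2^{-(2t+1)},2^{-t}}$ is the same observation the paper packages as ``homeomorphic to the harmonic comb''), invoke Lemma~\ref{lem:comp_roid} to get $f(t)\in U_t$ for almost all $t$, and conclude that a $\Pi^0_1$ dendrite $L$ missing the top of the $f(t)$-th thick rising must lie entirely on one side, so $d_H(L,K)$ is bounded below. The choice of a box rather than the paper's segment $K^{1*}_{t,u}$, and your intermediate-value-theorem phrasing of the disconnection step, are cosmetic.

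Two small repairs are needed. First, $B_{t,u}$ should be a \emph{closed} rational box: it is only by compactness of $\hat{B}_{t,u}$ that ``$\hat{B}_{t,u}\cap L=\emptyset$'' is a $\Sigma^0_1$ event given the co-c.e.\ cover of $\mathbb{R}^2\setminus L$, whereas for an open $B_{t,u}$ the condition ``$B_{t,u}\cap L=\emptyset$'' is not obviously c.e. (compare the use of $\hat B_t$ in the proof of Theorem~\ref{thm:main:dendrite1}). Second, your claim that $K\cap\{x=c_*(t,f(t))\}$ is the single point $\lrangle{c_*(t,f(t)),2^{-t}}$ is false when $f(t)=0$: in that case $c_*(t,0)=2^{-2t}$ is the left endpoint of the bridge $[2^{-2t},2^{-(2t-1)}]\times\{0\}$ (for $t\geq 1$), so $\lrangle{2^{-2t},0}$ also lies on the slice --- the removal $K^2_{t,0}$ only cuts the grip of $K_t$, not the adjoining bridge. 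As written the IVT step therefore does not close for such $t$. The fix is immediate: $\{U^g_n=\{y:y\geq n\}\}_n$ is a uniformly c.e.\ sequence of cofinite sets, so Lemma~\ref{lem:comp_roid} gives $f(t)\geq t$ for almost all $t$; restricting to those $t$ gives $f(t)\geq 1$ and the slice really is a single missing point. With these two fixes the argument is correct and essentially the paper's.
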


What remains to show is that every $\Pi^0_1$ subdendrite $R\subseteq K$ satisfies $d_H(R,K)$
$\geq 1$.
Let $R\subseteq K$ be a $\Pi^0_1$ dendrite.
Put $S(t)=[2^{-(2t+1)},2^{-2t}]\times[0,2^{-t}]$.
Since $R$ is locally connected, $R\cap S(t)=R\cap K_t$ is also locally connected for each $t\in\mathbb{N}$ and $m<2^t$.
Thus, for fixed $t\in\mathbb{N}$, let $K^{1*}_{t,u}=[c_*(t,u)-2^{-(2t+u+3)},c_*(t,u)+2^{-(2t+u+3)}]\times\{2^{-t}\}$.
For any continuum $R^*\subset K_{t}$, if $R^*\cap K^{1*}_{t,u}\not=\emptyset$ for infinitely many $u\in\nn$, then $R^*$ must be homeomorphic to the harmonic comb $\mathcal{H}$.
Hence, $R^*$ is not locally connected.
Therefore, we have $R\cap K^{1*}_{t,u}=\emptyset$ for almost all $u\in\mathbb{N}$.
Since $K^{1*}_{t,u}$ and $K^{1*}_{s,v}$ is disjoint whenever $\lrangle{t,u}\not=\lrangle{s,v}$, and since $R$ is $\Pi^0_1$, we can effectively enumerate $U_t=\{u\in\mathbb{N}:R\cap K^{1*}_{t,u}=\emptyset\}$, i.e., $\{U_t:t\in\mathbb{N}\}$ is uniformly c.e.
Moreover, $U_t$ is cofinite for every $t\in\mathbb{N}$.
Then, by our definition of $f=\lim_sf_s$ in Lemma \ref{lem:comp_roid}, there exists $t^*\in\mathbb{N}$ such that $f(t)\in U_t$ for all $t\geq t^*$.
Note that $v(t,f(t))>0$ and thus the condition $f(t)\in U_t$ (i.e., $R\cap K^{1*}_{t,f(t)}=\emptyset$) implies that, for every $t\geq t^*$, either $R\subseteq [-1,c_*(t,u)+v_*(t,u)]\times[0,1]$ or $R\subseteq [c_*(t,u)-v_*(t,u),1]\times[0,1]$ holds.
Thus we obtain the desired condition $d_H(R,K)\geq 1$.
\end{proof}

\begin{remark}
It is easy to see that the dendroid constructed in the proof of Theorem \ref{thm:roid:alinc} is contractible. 
\end{remark}

\begin{cor}
There exists a nonempty contractible planar computable closed subset of $[0,1]^2$ which $\ast$-includes no $\Pi^0_1$ subset which is connected and locally connected.
\end{cor}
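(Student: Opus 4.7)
The plan is to deduce this corollary directly from Theorem~\ref{thm:roid:alinc} together with the Remark immediately preceding it. The key idea is that the class of $\Pi^0_1$ connected locally connected subsets of a planar dendroid is no richer than the class of $\Pi^0_1$ dendrites, so the non-$\ast$-inclusion result already established is automatically stronger than it looks.

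First I would take the computable planar dendroid $K$ constructed in the proof of Theorem~\ref{thm:roid:alinc}, which by the Remark is contractible. The set $K$ as written lies in $[-1,1]\times[0,1]$, so a computable affine rescaling repositions it inside $[0,1]^2$; this preserves contractibility, computability of the closed set, and the failure of $\ast$-including any $\Pi^0_1$ dendrite (all three properties are invariant under computable similarities of $\mathbb{R}^2$, and Hausdorff distances merely rescale by the computable similarity factor).

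The core step is the observation that every $\Pi^0_1$ subset $R\subseteq K$ which is connected and locally connected is automatically a $\Pi^0_1$ dendrite. Indeed, since $R$ is closed in the compact square $[0,1]^2$, it is compact; together with connectivity and local connectivity this makes $R$ a Peano continuum. Now $K$ is a dendroid, hence hereditarily unicoherent; but a Jordan curve decomposes as $A\cup B$ for two arcs $A,B$ meeting in a two-point set, and such a union is not unicoherent. Therefore $K$ contains no Jordan curve, and a fortiori neither does $R\subseteq K$. Thus $R$ is a Peano continuum without Jordan curves, i.e., a dendrite.

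Finally, I would close the argument by contradiction: if the rescaled $K$ were to $\ast$-include some $\Pi^0_1$ connected locally connected family $\{R_\varepsilon\}_{\varepsilon>0}$ with $d_H(R_\varepsilon,K)<\varepsilon$, then by the previous step each $R_\varepsilon$ would be a $\Pi^0_1$ dendrite, contradicting Theorem~\ref{thm:roid:alinc}. The only subtle point is the structural lemma that a hereditarily unicoherent continuum contains no Jordan curve, but this is a one-line consequence of the definition, as noted above. No genuine obstacle arises; the corollary is essentially a repackaging of Theorem~\ref{thm:roid:alinc} using the topological fact that local connectivity promotes a subcontinuum of a dendroid to a dendrite.
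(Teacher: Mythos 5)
Your proof is correct and fills in exactly the details that the paper's unproved corollary presupposes: that a connected, locally connected $\Pi^0_1$ subset of the dendroid $K$ is automatically a $\Pi^0_1$ dendrite (compactness from closedness in $[0,1]^2$ gives a Peano continuum, and hereditary unicoherence of $K$ excludes Jordan curves), after which the claim follows from Theorem~\ref{thm:roid:alinc} and the Remark on contractibility, with a harmless computable affine rescaling to place $K$ inside $[0,1]^2$. This is the same route the paper implicitly intends, simply written out.
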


\begin{theorem}\label{thm:special_roid}
Not every nonempty $\Pi^0_1$ planar dendroid contains a computable point.
\end{theorem}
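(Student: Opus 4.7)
The plan is to combine the plotting construction $\Psi$ from Theorem \ref{thm:rite_alcom} with a classical Non-Basis Theorem. Fix a $\Pi^0_1$ class $P\subseteq 2^\mathbb{N}$ with no computable members; by Lemma \ref{lem:rite:2}, the plotted tree $\Psi(T_P)$ is already a $\Pi^0_1$ planar dendrite whose infinite branches project to non-computable points on $[0,1]\times\{0\}$. However, $\Psi(T_P)$ itself is plainly not a witness: every vertex $\psi(\sigma)$ is a computable point of $\mathbb{R}^2$, and every rational interior point of an edge $L(\psi(\sigma^-),\psi(\sigma))$ is too. The basic difficulty is therefore to retain only the non-computable ``limit'' part of $\Psi(T_P)$ while keeping the set connected.

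To eliminate the computable points, I would iterate $\Psi(T_P)$ at every scale. Concretely, inductively replace each edge $L(\psi(\sigma^-),\psi(\sigma))$ of $\Psi(T_P)$ by a suitably-scaled fat copy of $\Psi(T_P)$, using the parametrized operator $\Psi(T_P;w,c,t,q)$ developed for the proof of Theorem \ref{thm:rite_alcom}. Then recursively apply the same replacement to every edge of every new scaled copy, along with harmonic-comb style accumulation as in the $K_t$ gadget of Theorem \ref{thm:roid:alinc}. The parameters should be chosen so that (i) the diameters of the scaled copies at nesting depth $n$ go to zero uniformly, making the limit set $D$ compact and uniformly $\Pi^0_1$, and (ii) the nested copies accumulate densely along every edge of every intermediate stage, so that the limit is a dendroid rather than a dendrite. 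Because every arc in the construction can be continuously deformed back through the tree-like gluing pattern to the root, $D$ will be contractible.

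To show $D$ has no computable point, suppose for contradiction that $x\in D$ is computable. Then at every nesting level $n$, I claim one can compute from $x$ the unique scaled copy of $\Psi(T_P)$ of depth $n$ that contains $x$, and within that copy, the unique vertex $\sigma_n\in T_P$ toward which the coordinates of $x$ asymptotically point. This uses exactly the localization idea behind Lemma \ref{lem:rite:3}: a $\Pi^0_1$ presentation of $D$ together with computable coordinates of $x$ lets us decide, for each $\sigma$, whether $x\in\hat{B}_-(\sigma)$. Threading the resulting choices yields a computable infinite path through $T_P$, i.e., a computable element of $P$, contradicting the choice of $P$.

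The main obstacle is the step-by-step bookkeeping of the recursive replacement — in particular, ensuring that no single point of $D$ ever sits on a ``bare'' line segment of some intermediate stage (which would persist into $D$ as a computable point), while simultaneously preserving arc-connectedness and hereditary unicoherence so that $D$ is genuinely a dendroid. Concretely, the widths $w$ and scalings $q$ at each stage must shrink fast enough for compactness and convergence, yet slowly enough that the harmonic-comb accumulations leave no gap along any edge. Carrying this out requires a choice of scaling parameters analogous to, but more demanding than, those in Theorems \ref{thm:rite_alcom} and \ref{thm:roid:alinc}.
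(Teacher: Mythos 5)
Your proposal correctly identifies the main obstacle but does not resolve it, and the paper's actual resolution is of a genuinely different character than anything you suggest.

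The core problem you acknowledge in your final paragraph — making sure no point of $D$ sits on a ``bare'' line segment that persists into $D$ — is not a matter of tuning widths and scales. In a tree-like recursive replacement, the points that must persist for \emph{connectivity} are precisely the gluing points between consecutive scaled copies (the roots/junctions of the nested $\Psi(T_P)$'s), and these sit at computable rational locations at every stage. Either a junction persists into $D$, in which case $D$ contains a computable point; or every junction is eventually removed, in which case $D$ is disconnected in the limit. Your ``harmonic-comb style accumulation'' does not escape this dichotomy: in the harmonic comb (and in the $K_t$ gadget of Theorem~\ref{thm:roid:alinc}) the accumulation arc itself, and the point where the limit tooth meets the grip, are computable. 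So the recursion as you describe it fails to eliminate all computable points while keeping $D$ compact and arcwise connected.

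Your verification argument has a parallel gap. The claim that a computable $x\in D$ determines ``the unique scaled copy of depth $n$ containing $x$'' fails for exactly the same points: junction points belong to the closure of several copies and give you no unique choice, and a point that is a junction at every level contributes no infinite path through $T_P$. Lemma~\ref{lem:rite:3}-style localization produces a \emph{computable subtree} from a computable subdendrite of a single plotted tree; it does not give a path from a single computable point of a recursively-nested structure.

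The paper sidesteps both problems with an idea that does not appear in your proposal. It constructs $Q$ as a $\Pi^0_1$ \emph{Cantor fan}, so the only gluing point is the single ramification point of the fan. It then steers that apex to a non-computable coordinate $\gamma=\rho(B)$ (for $B$ a non-computable c.e.\ set) by dragging the fat approximation through a sequence of blocks along a Miller-style located arc; this is the ``moving target'' device, and it is what allows the lone topologically necessary junction to be non-computable. The body of the fan is homeomorphic to $(0,1]\times P$ for a $\Pi^0_1$ class $P\subseteq C$ with no computable members, so it contributes no computable points either. Both incomputability sources ($P$ for the fan's blades, $B$ for its apex) are essential, and the second one is missing from your plan.

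If you want to pursue your approach, the missing piece is precisely a mechanism that relocates the gluing/ramification points to non-computable coordinates at the limit; once you see that this is required, you will likely be led back to a moving-target construction of exactly the kind the paper uses.
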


\begin{proof}\upshape
One can easily construct a $\Pi^0_1$ Cantor fan $F$ containing at most one computable point $p\in F$, and such $p$ is the unique ramification point of $F$.
Our basic idea is to construct a topological copy of the Cantor fan $F$ along a pathological located arc $A$ constructed by Miller \cite[Example 4.1]{Mil}.
We can guarantee that moving the fan $F$ along the arc $A$ cannot introduce new computable points.
Additionally, this moving will make a ramification point $p^*$ in a copy of $F$ incomputable.

\medskip

\noindent
{\bf Fat Approximation.}
To archive this construction, we consider a fat approximation of a subset $P$ of the middle third Cantor set $C\subseteq\mathbb{R}^1$, by modifying the standard construction of $C$.
For a tree $T\subseteq 2^{<\mathbb{N}}$, put $\pi(\sigma)=3^{-1}+2\sum_{i<lh(\sigma)\;\&\; \sigma(i)=1}3^{-(i+2)}$ for $\sigma\in T$, and $J(\sigma)=[\pi(\sigma)-3^{-(lh(\sigma)+1)},\pi(\sigma)+2\cdot 3^{-(lh(\sigma)+1)}]$.
If a binary string $\sigma$ is incomparable with a binary string $\tau$ then $J(\sigma)\cap J(\tau)=\emptyset$.
We extend $\pi$ to a homeomorphism $\pi_*$ between Cantor space $2^\mathbb{N}$ and $C\cap[1/3,2/3]$ by defining $\pi_*(f)=3^{-1}+2\sum_{f(i)=1}3^{-(i+2)}$ for $f\in 2^\mathbb{N}$.
Let $P^*\subseteq 2^\mathbb{N}$ be a nonempty $\Pi^0_1$ set without computable elements.
Then there exists a computable tree $T_P$ such that $P^*$ is the set of all paths of $T_P$, since $P^*$ is $\Pi^0_1$.
{\em A fat approximation $\{P_s:s\in\mathbb{N}\}$ of $P=\pi_*(P^*)$} is defined as $P_s=\bigcup\{J(\sigma):lh(\sigma)=s\;\&\;\sigma\in T_P\}$.
Then $\{P_s:s\in\mathbb{N}\}$ is a computable decreasing sequence of computable closed sets, and we have $P=\bigcap_sP_s$.
Since $P$ is a nonempty bounded closed subset of a real line $\mathbb{R}^1$, both $\min P$ and $\max P$ exist.
By the same reason, both $l_s^-=\min P_s$ and $r_s^+=\max P_s$ also exist, for each $s\in\mathbb{N}$, and $\lim_sl_s^-=\min P$ and $\lim_sr_s^+=\max P$, where $\{l_s:s\in\mathbb{N}\}$ is increasing, and $\{r_s:s\in\mathbb{N}\}$ is decreasing.
Let $l_s=l_s^-+3^{-(s+1)}$ and $r_s=r_s^+-3^{-(s+1)}$.
We also set $l_s^*=l_s^-+3^{-(s+2)}$ and $r_s=r_s^+-3^{-(s+2)}$.
Note that $l_s<r_s$, $\lim_sl_s=\min P$, and $\lim_sr_s=\max P$.
Since $\min P,\max P\in P$ and $P$ contains no computable points, $\min P$ and $\max P$ are non-computable, and so $l_s<\min P<\max P<r_s$ holds for any $s\in\mathbb{N}$.
The fat approximation of $P$ has the following remarkable property:
\[[l_s^-,l_s]\subseteq P_s,\ [l_s^-,l_s]\cap P=\emptyset,\ [r_s,r_s^+]\subseteq P_s,\ \mbox{and }[r_s,r_s^+]\cap P=\emptyset.\]
To simplify the construction, we may also assume that $P$ has the following property:
\[P=\{1-x\in\mathbb{R}:x\in P\}\]
Because, for any $\Pi^0_1$ subset $A\subseteq C$, the $\Pi^0_1$ set $A^*=\{x/3:x\in A\}\cup\{1-x/3:x\in A\}\subseteq C$ has that property.

\medskip

\noindent
{\bf Basic Notation.}
For each $i,j<2$, for each $a,b\in\mathbb{R}^2$, and for each $q,r\in\mathbb{R}$, {\em the $2$-cube} $\Delta_{ij}(a,b;q,r)\subseteq [a,a+q]\times [b,b+r]$ is defined as the smallest convex set containing the three points $\{(a,b),(a+q,b),(a,b+r),(a+q,b+r)\}\setminus\{(a+(1-i)q,b+(1-j)r)\}$.
Namely,
\begin{align*}
\Delta_{ij}(a,b;q,r)=\{&\lrangle{(-1)^ix+a+iq,(-1)^jy+b+jr}\in\mathbb{R}^2\\
&:x,y\geq 0\;\&\;rx+qy\leq qr\}.
\end{align*}
For a set $R\subseteq\mathbb{R}^1$ and real numbers $r,y\in\mathbb{R}$, put $\Theta(R;r,y)=\{rx+y\in\mathbb{R}:x\in R\}$.
Clearly $\Theta(R;r,y)$ is computably homeomorphic to $R$.
Let four symbols $\llcorner$, $\urcorner$, $\lrcorner$, and $\ulcorner$ denote $\lrangle{10,01}$, $\lrangle{01,10}$, $\lrangle{00,11}$, and $\lrangle{11,00}$, respectively.
For $v\in\{\llcorner,\urcorner,\lrcorner,\ulcorner\}$ and for any $R\subseteq[0,1]$, $a,b\in\mathbb{R}^2$, and $q,r\in\mathbb{R}$, we define $[v](R;a,b;q,r)\subseteq[a,a+q]\times[b,b+r]$ as follows:
\begin{align*}
[v](R;a,b;q,r)=\big(([a,a+q]\times\Theta(R;r,b))\cap\Delta_{v(0)}(a,b;q,r)\big)\\
\cup\big((\Theta(R;q,a)\times [b,b+r])\cap\Delta_{v(1)}(a,b;q,r)\big).
\end{align*}

\begin{figure}[t]\centering
 \begin{minipage}{0.48\hsize}
  \begin{center}
\unitlength 0.1in
\begin{picture}( 17.0000, 11.7000)(  1.0000,-12.0000)
%
\special{pn 8}%
\special{sh 0.300}%
\special{pa 800 400}%
\special{pa 800 1000}%
\special{pa 1400 400}%
\special{pa 1400 400}%
\special{pa 800 400}%
\special{fp}%
%
\special{pn 8}%
\special{sh 0.300}%
\special{pa 800 1000}%
\special{pa 1400 400}%
\special{pa 1400 400}%
\special{pa 1400 1000}%
\special{pa 1400 1000}%
\special{pa 800 1000}%
\special{fp}%
%
\special{pn 8}%
\special{pa 400 1200}%
\special{pa 400 200}%
\special{fp}%
\special{sh 1}%
\special{pa 400 200}%
\special{pa 380 268}%
\special{pa 400 254}%
\special{pa 420 268}%
\special{pa 400 200}%
\special{fp}%
\special{pa 400 1200}%
\special{pa 1800 1200}%
\special{fp}%
\special{sh 1}%
\special{pa 1800 1200}%
\special{pa 1734 1180}%
\special{pa 1748 1200}%
\special{pa 1734 1220}%
\special{pa 1800 1200}%
\special{fp}%
%
\special{pn 8}%
\special{pa 400 1000}%
\special{pa 800 1000}%
\special{dt 0.045}%
\special{pa 800 1000}%
\special{pa 800 1200}%
\special{dt 0.045}%
\special{pa 1400 1200}%
\special{pa 1400 1000}%
\special{dt 0.045}%
\special{pa 400 400}%
\special{pa 800 400}%
\special{dt 0.045}%
%
\special{pn 8}%
\special{pa 1000 600}%
\special{pa 800 200}%
\special{dt 0.045}%
\put(6.0000,-2.0000){\makebox(0,0)[lb]{$\Delta_{01}(a,b;q,r)$}}%
%
\special{pn 8}%
\special{pa 1200 800}%
\special{pa 1600 600}%
\special{dt 0.045}%
\put(14.5000,-6.0000){\makebox(0,0)[lb]{$\Delta_{10}(a,b;q,r)$}}%
\put(7.6000,-13.3000){\makebox(0,0)[lb]{$a$}}%
\put(12.6000,-13.5000){\makebox(0,0)[lb]{$a+q$}}%
\put(3.1000,-10.6000){\makebox(0,0)[lb]{$b$}}%
\put(1.0000,-4.7000){\makebox(0,0)[lb]{$b+r$}}%
\end{picture}%
  \end{center}
 \vspace{-0.5em}
\caption{The cubes $\Delta_{ij}(a,b,q,r)$.}
  \label{fig:Cube1}
 \end{minipage}
 \begin{minipage}{0.48\hsize}
  \begin{center}
\unitlength 0.1in
\begin{picture}( 17.0000, 10.0000)(  2.0000,-12.0000)
%
\special{pn 8}%
\special{pa 500 1200}%
\special{pa 500 200}%
\special{fp}%
\special{sh 1}%
\special{pa 500 200}%
\special{pa 480 268}%
\special{pa 500 254}%
\special{pa 520 268}%
\special{pa 500 200}%
\special{fp}%
\special{pa 500 1200}%
\special{pa 1900 1200}%
\special{fp}%
\special{sh 1}%
\special{pa 1900 1200}%
\special{pa 1834 1180}%
\special{pa 1848 1200}%
\special{pa 1834 1220}%
\special{pa 1900 1200}%
\special{fp}%
\put(8.6000,-13.3000){\makebox(0,0)[lb]{$a$}}%
\put(13.6000,-13.5000){\makebox(0,0)[lb]{$a+q$}}%
\put(4.1000,-10.6000){\makebox(0,0)[lb]{$b$}}%
\put(2.0000,-4.7000){\makebox(0,0)[lb]{$b+r$}}%
%
\special{pn 8}%
\special{sh 0.600}%
\special{pa 1500 900}%
\special{pa 1000 900}%
\special{pa 1000 400}%
\special{pa 1200 400}%
\special{pa 1200 700}%
\special{pa 1500 700}%
\special{pa 1500 700}%
\special{pa 1500 900}%
\special{ip}%
%
\special{pn 8}%
\special{pa 900 1000}%
\special{pa 1500 400}%
\special{dt 0.045}%
%
\special{pn 8}%
\special{pa 500 1000}%
\special{pa 900 1000}%
\special{dt 0.045}%
\special{pa 900 1200}%
\special{pa 900 1000}%
\special{dt 0.045}%
\special{pa 1500 1200}%
\special{pa 1500 400}%
\special{dt 0.045}%
\special{pa 1500 400}%
\special{pa 500 400}%
\special{dt 0.045}%
%
\special{pn 8}%
\special{pa 1700 800}%
\special{pa 1500 800}%
\special{dt 0.045}%
\special{sh 1}%
\special{pa 1500 800}%
\special{pa 1568 820}%
\special{pa 1554 800}%
\special{pa 1568 780}%
\special{pa 1500 800}%
\special{fp}%
\put(17.2000,-8.5500){\makebox(0,0)[lb]{A copy of $R$}}%
\end{picture}%
  \end{center}
 \vspace{-0.5em}
\caption{$[\llcorner](R;a,b;q,r)$ for $R=[1/6,1/2]$}
  \label{fig:Cube2}
 \end{minipage}
\end{figure}

\begin{sublem}
$[v](P;a,b;q,r)$ is computably homeomorphic to $P\times [0,1]$.
In particular, $[v](P;a,b;q,r)$ contains no computable points.
\end{sublem}

To simplify our argument, we use the normalization $\tilde{P}_t^s$ of $P_t$ for $t\geq s$, that is defined by $\tilde{P}_t^s=\{(x-l_s^-)/(r_s^+-l_s^-)\in\mathbb{R}:x\in P_t\}$, for each $s\in\mathbb{N}$.
Note that $\tilde{P}^s_t\subseteq [0,1]$ for $t\geq s$, and $0,1\in\tilde{P}^s_s$ holds for each $s\in\mathbb{N}$.
Put $[v]^s_t([a,a+q]\times[b,b+r])=[v](\tilde{P}_t^s;a,b;q,r)$ for $t\geq s$.
We also introduce the following two notions:
\begin{align*}
[-]^s_t([a,a+q]\times[b,b+r])&=[a,a+q]\times\Theta(\tilde{P}_t^s;r,b);\\
[\;\mid\;]^s_t([a,a+q]\times[b,b+r])&=\Theta(\tilde{P}_t^s;q,a)\times[b,b+r].
\end{align*}
Here we code two symbols $-$ and $\mid$ as $0$ and $1$ respectively.

\begin{sublem}
$[v]^s_t([a,a+q]\times[b,b+r])\subseteq[a,a+q]\times[b,b+r]$, and $[v]^s_t([a,a+q]\times[b,b+r])$ intersects with the boundary of $[a,a+q]\times[b,b+r]$.
\end{sublem}

\begin{sublem}
There is a computable homeomorphism between $[v]^s_t(a,b;q,r)$ and $P_t\times [0,1]$ for any $t\in\nn$.
Therefore, $\bigcap_t[v]^s_t(a,b;q,r)$ is computably homeomorphic to $P\times [0,1]$.
\end{sublem}

\noindent
{\bf Blocks.}
A {\em block} is a set $Z\subseteq\mathbb{R}^2$ with {\em a bounding box ${\rm Box}(Z)=[a,a+q]\times[b,b+r]$}.
Each $\delta\in 2^2$ is called a {\em direction}, and directions $\lrangle{00}$, $\lrangle{01}$, $\lrangle{10}$, and $\lrangle{11}$ are also denoted by $[\leftarrow]$, $[\rightarrow]$, $[\downarrow]$, and $[\uparrow]$, respectively.
For $\delta\in 2^2$, $\delta^\circ=\lrangle{\delta(0),1-\delta(0)}$ is called {\em the reverse direction of $\delta$}.
Put ${\rm Line}(Z;[\leftarrow])=\{a\}\times[b,b+r]$; ${\rm Line}(Z;[\rightarrow])=\{a+q\}\times[b,b+r]$; ${\rm Line}(Z;[\downarrow])=[a,a+q]\times\{b\}$; ${\rm Line}(Z;[\uparrow])=[a,a+q]\times\{b+r\}$.
Assume that a class $\mathcal{Z}$ of blocks is given.
We introduce a relation $\touch{\delta}$ on $\mathcal{Z}$ for each direction $\delta$.
Fix a block $Z_{\rm first}\in\mathcal{Z}$, and we call it {\em the first block}.
Then we declare that $\touch{[\leftarrow]}Z_{\rm first}$ holds.
We inductively define the relation $\touch{\delta}$ on $\mathcal{Z}$.
If $Z\touch{\delta}Z_0$ (resp.\ $Z_0\touch{\delta}Z$) for some $Z$ and $\delta$, then we also write it as $\touch{\delta}Z_0$ (resp.\ $Z_0\touch{\delta}$).
For any two blocks $Z_0$ and $Z_1$, the relation $Z_0\touch{\delta}Z_1$ holds if the following three conditions are satisfied:
\begin{enumerate}
\item $Z_0\cap Z_1={\rm Line}(Z_0;\delta)\cap Z_0={\rm Line}(Z_1;\delta^\circ)\cap Z_1\not=\emptyset$.
\item $\touch{\varepsilon}Z_0$ has been already satisfied for some direction $\varepsilon$.
\item $Z_1\touch{\varepsilon}Z_0$ does not satisfied for any direction $\varepsilon$
\end{enumerate}
If $Z_0\touch{\delta}Z_1$ for some $\delta$, then we say that {\em $Z_1$ is a successor of $Z_0$} ($Z_0$ is a predecessor of $Z_1$), and we also write it as $Z_0\touch{}Z_1$.

We will construct a partial computable function $\mathcal{Z}:\nn^3\to\mathcal{A}(\mathbb{R}^2)$ with a computable function $k:\nn\to\nn$ and ${\rm dom}(\mathcal{Z})=\{(u,i,t)\in\nn^3:u\leq t\;\&\;i<k(u)\}$ such that $\mathcal{Z}(u,i,t)$ is a block with a bounding box for any $(u,i,t)\in{\rm dom}(\mathcal{Z})$, and the block $\mathcal{Z}(u,i,t)$ is computably homeomorphic to $P_t\times[0,1]$ uniformly in $(u,i,t)$.
Here $\mathcal{A}(\mathbb{R}^2)$ is the hyperspace of all closed subsets in $\mathbb{R}^2$ with positive and negative information.
For each stage $t$, $\mathcal{Z}_t(u)=\{\mathcal{Z}(t,u,i):i<k(u)\}$ for each $u\leq t$ is defined.
Let $\mathcal{Z}(u)$ denote the finite set $\{\lambda t.\mathcal{Z}(t,u,i):i<k(u)\}$ of functions, for each $u\in\nn$.
The relation $\touch{}$ induces a pre-ordering $\prec$ on $\bigcup_{u\in\nn}\mathcal{Z}(u)$ as follows:
$Z_0\prec Z_1$ if there is a finite path from $Z_0(t)$ to $Z_1(t)$ on the finite directed graph $(\bigcup_{u\leq t}\mathcal{Z}_t(u),\touch{})$ at some stage $t\in\nn$.
We will assure that $\prec$ is a well-ordering of order type $\omega$, and $Z_0\prec Z_1$ whenever $Z_0\in\mathcal{Z}(u)$, $Z_1\in\mathcal{Z}(v)$, and $u<v$.
In particular, for every $Z\in\bigcup_{u\in\nn}\mathcal{Z}(u)$, the predecessor $Z_{\rm pre}$ of $Z$ and the successor $Z_{\rm suc}$ of $Z$ under $\prec$ are uniquely determined.
If $Z_{\rm pre}(t)\touch{\delta}Z(t)\touch{\varepsilon}Z_{\rm suc}(t)$, then we say that {\em $Z$ moves from $\delta$ to $\varepsilon$}, and that $\lrangle{\delta,\varepsilon}$ is {\em the direction of $Z$}.
\begin{example}\label{example}
Fig. \ref{fig:mthmblock} is an example satisfying $\touch{[\leftarrow]}Z_{\rm first}\touch{[\leftarrow]}Z_0\touch{[\downarrow]}Z_1\touch{[\rightarrow]}Z_2$.
\end{example}

\begin{figure}[t]\centering
  \begin{center}
\unitlength 0.1in
\begin{picture}( 11.0000, 10.0000)(  0.0000,-16.0000)
%
\special{pn 8}%
\special{sh 0.600}%
\special{pa 1100 1100}%
\special{pa 500 1100}%
\special{pa 500 1000}%
\special{pa 1100 1000}%
\special{pa 1100 1100}%
\special{ip}%
%
\special{pn 8}%
\special{sh 0.300}%
\special{pa 500 700}%
\special{pa 1100 700}%
\special{pa 1100 1000}%
\special{pa 500 1000}%
\special{pa 500 700}%
\special{ip}%
%
\special{pn 8}%
\special{sh 0.600}%
\special{pa 1100 700}%
\special{pa 500 700}%
\special{pa 500 600}%
\special{pa 1100 600}%
\special{pa 1100 700}%
\special{ip}%
%
\special{pn 8}%
\special{sh 0.600}%
\special{pa 500 600}%
\special{pa 0 600}%
\special{pa 0 700}%
\special{pa 500 700}%
\special{pa 500 600}%
\special{ip}%
%
\special{pn 8}%
\special{sh 0.600}%
\special{pa 0 700}%
\special{pa 100 700}%
\special{pa 100 1100}%
\special{pa 0 1100}%
\special{pa 0 700}%
\special{ip}%
%
\special{pn 8}%
\special{sh 0.600}%
\special{pa 500 1000}%
\special{pa 300 1000}%
\special{pa 300 1100}%
\special{pa 500 1100}%
\special{pa 500 1000}%
\special{ip}%
%
\special{pn 8}%
\special{sh 0.300}%
\special{pa 500 700}%
\special{pa 100 700}%
\special{pa 100 1000}%
\special{pa 500 1000}%
\special{pa 500 700}%
\special{ip}%
%
\special{pn 8}%
\special{sh 0.300}%
\special{pa 300 1000}%
\special{pa 100 1000}%
\special{pa 100 1100}%
\special{pa 300 1100}%
\special{pa 300 1000}%
\special{ip}%
%
\special{pn 8}%
\special{sh 0.300}%
\special{pa 100 1100}%
\special{pa 300 1100}%
\special{pa 300 1500}%
\special{pa 100 1500}%
\special{pa 100 1100}%
\special{ip}%
%
\special{pn 8}%
\special{sh 0.600}%
\special{pa 0 1100}%
\special{pa 100 1100}%
\special{pa 100 1600}%
\special{pa 0 1600}%
\special{pa 0 1100}%
\special{ip}%
%
\special{pn 8}%
\special{sh 0.600}%
\special{pa 300 1100}%
\special{pa 400 1100}%
\special{pa 400 1400}%
\special{pa 300 1400}%
\special{pa 300 1100}%
\special{ip}%
%
\special{pn 8}%
\special{sh 0.600}%
\special{pa 100 1500}%
\special{pa 800 1500}%
\special{pa 800 1600}%
\special{pa 100 1600}%
\special{pa 100 1500}%
\special{ip}%
%
\special{pn 8}%
\special{sh 0.600}%
\special{pa 400 1300}%
\special{pa 800 1300}%
\special{pa 800 1400}%
\special{pa 400 1400}%
\special{pa 400 1300}%
\special{ip}%
%
\special{pn 8}%
\special{sh 0.300}%
\special{pa 300 1400}%
\special{pa 800 1400}%
\special{pa 800 1500}%
\special{pa 300 1500}%
\special{pa 300 1400}%
\special{ip}%
%
\special{pn 13}%
\special{pa 1100 1100}%
\special{pa 500 1100}%
\special{pa 500 600}%
\special{pa 1100 600}%
\special{pa 1100 1100}%
\special{fp}%
%
\special{pn 13}%
\special{pa 500 600}%
\special{pa 0 600}%
\special{pa 0 1100}%
\special{pa 500 1100}%
\special{pa 500 600}%
\special{fp}%
%
\special{pn 13}%
\special{pa 400 1100}%
\special{pa 0 1100}%
\special{pa 0 1600}%
\special{pa 400 1600}%
\special{pa 400 1100}%
\special{fp}%
%
\special{pn 13}%
\special{pa 400 1300}%
\special{pa 800 1300}%
\special{pa 800 1600}%
\special{pa 400 1600}%
\special{pa 400 1300}%
\special{fp}%
\put(7.0000,-9.0000){\makebox(0,0)[lb]{$Z_{\rm first}$}}%
\put(2.0000,-9.0000){\makebox(0,0)[lb]{$Z_0$}}%
\put(1.3500,-13.8000){\makebox(0,0)[lb]{$Z_1$}}%
\put(5.0000,-15.0000){\makebox(0,0)[lb]{$Z_2$}}%
\end{picture}%
  \end{center}
 \vspace{-0.5em}
\caption{Example \ref{example}.}
  \label{fig:mthmblock}
\end{figure}

\medskip

\noindent
{\bf Destination Point.}
Basically, our construction is similar as the construction by Miller \cite{Mil}.
Pick the standard homeomorphism $\rho$ between $2^\mathbb{N}$ and the middle third Cantor set, i.e., $\rho(M)=2\sum_{i\in M}(1/3)^{i+1}$ for $M\subseteq\mathbb{N}$, and pick a non-computable c.e. set $B\subseteq\mathbb{N}$ and put $\gamma=\rho(B)$.
We will construct a Cantor fan so that the first coordinate of the unique ramification point is $\gamma$, hence the fan will have a non-computable ramification point.
Let $\{B_s:s\in\mathbb{N}\}$ be a computable enumeration of $B$, and let $n_s$ denote the element enumerated into $B$ at stage $s$, where we may assume just one element is enumerated into $B$ at each stage.
Put $\gamma^{\min}_s=\rho(B_s)$ and $\gamma^{\max}_s=\rho(B_s\cup\{i\in\mathbb{N}:i\geq n_s\})$.
Note that $\gamma$ is not computable, and so $\gamma^{\min}_s\not=\gamma$ and $\gamma^{\max}_s\not=\gamma$ for any $s\in\mathbb{N}$.
This means that for every $s\in\mathbb{N}$ there exists $t>s$ such that $\gamma^{\min}_s\not=\gamma^{\min}_t$ and $\gamma^{\max}_s\not=\gamma^{\max}_t$.
By this observation, without loss of generality, we can assume that $\gamma^{\min}_s\not=\gamma^{\min}_t$ and $\gamma^{\max}_s\not=\gamma^{\max}_t$ whenever $s\not=t$.
We can also assume $1/3\leq\gamma^{\min}_s\leq\gamma^{\max}_s\leq 2/3$ for any $s\in\mathbb{N}$.

\medskip

\noindent
{\bf Stage $0$.}
We now start to construct a $\Pi^0_1$ Cantor fan $Q=\bigcap_{s\in\mathbb{N}}Q_s$.
At the first stage $0$, and for each $t\geq 0$, we define the following sets:
\[Z_{0,t}^{\rm st}=[-]^s_t([\gamma_0^{\min},\gamma_0^{\max}]\times[l_0^-,r_0^+]);\ Z_0^{\rm end}=[\gamma_0^{\min}-1/3,\gamma_0^{\min}]\times[l_0^-,r_0^+].\]
Moreover, we set $Q_0=Z_{0,0}^{\rm st}\cup Z_0^{\rm end}$.
By our choice of $P_0$, actually $Q_0=[\gamma_0^{\min}-1/3,\gamma_0^{\max}]\times[l_0^-,r_0^+]$.
$Z_{0,0}^{\rm st}$ is called {\em the straight block from $2/3$ to $1/3$ at stage $0$}, and $Z_0^{\rm end}$ is called {\em the end box at stage $0$}.
The {\em bounding box} of the block $Z_0^{\rm st}$ is defined by $[\gamma_0^{\min},\gamma_0^{\max}]\times[l_0^-,r_0^+]$.
{\em The collection of $0$-blocks at stage $t$} is $\mathcal{Z}_t(0)=\{Z_{0,t}^{\rm st}\}$.
We declare that $Z_0^{\rm st}$ is the first block, and that $\touch{[\leftarrow]}Z_0^{\rm st}$.

\medskip

\noindent
{\bf Stage $s+1$.}
Inductively assume that we have already constructed the collection of $u$-blocks $\mathcal{Z}_t(u)$ at stage $t\geq u$ is already defined for every $u\leq s$.
For any $u$, we think of the collection $\mathcal{Z}(u)=\{\mathcal{Z}_t(u):t\geq u\}$ as a finite set $\{Z^u_i\}_{i<\#\mathcal{Z}_u(u)}$ of computable functions $Z_i^u:\{t\in\nn:t\geq u\}\to\bigcup_{t}\mathcal{Z}_t(u)$ such that $\mathcal{Z}_t(u)=\{Z^u_i(t):i<\#\mathcal{Z}_u(u)\}$ for each $t\geq u$.
We inductively assume that the collection $\mathcal{Z}(u)=\{\mathcal{Z}_t(u):t\geq u\}$ satisfies the following conditions:
\begin{enumerate}
\item[(IH1)] For each $Z\in\mathcal{Z}(u)$ and for each $t\geq v\geq u$, $Z(t)\subseteq Z(v)$.
\item[(IH2)] There is a computable function $f:\mathbb{R}^2\to\mathbb{R}^2$ such that $f\res\bigcup\bigcup_{u\leq s}\mathcal{Z}_t(u)$ is a homeomorphism between $\bigcup\bigcup_{u\leq s}\mathcal{Z}_t(u)$ and $P_t\times [0,1]$ for any $t\geq s$.
\item[(IH3)] There are $y,z,\zeta\in\mathbb{Q}$ such that the blocks $Z_{s,t}^{\rm st}$ and $Z_s^{\rm end}$ are constructed as follows:
\begin{align*}
Z_{s,t}^{\rm st}&=[-]^s_t([\gamma_s^{\min},\gamma_s^{\max}]\times[y+zl_s^-,y+zr_s^+]);\\
Z_s^{\rm end}&=[\gamma_s^{\min}-\zeta,\gamma_s^{\min}]\times[y+zl_s^-,y+zr_s^+].
\end{align*}
Here, a computable closed set $Q_s$, {\em an approximation of our $\Pi^0_1$ Cantor fan $Q$ at stage $s$}, is defined by $Q_s=Z_s^{\rm end}\cup\bigcup\bigcup_{u\leq s}\mathcal{Z}_{s}(u)$.
\end{enumerate}

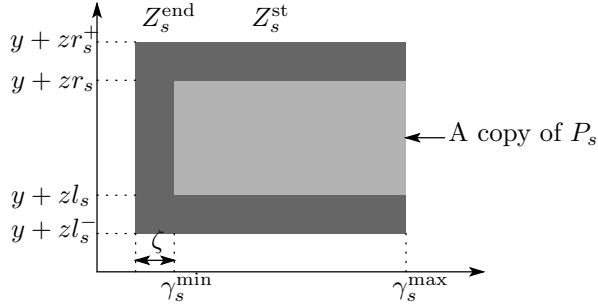
\begin{figure}[t]\centering
  \begin{center}
\unitlength 0.1in
\begin{picture}( 24.5000, 14.2000)(  1.4000,-18.0000)
%
\special{pn 8}%
\special{sh 0.300}%
\special{pa 990 800}%
\special{pa 2190 800}%
\special{pa 2190 1400}%
\special{pa 990 1400}%
\special{pa 990 800}%
\special{ip}%
%
\special{pn 8}%
\special{sh 0.600}%
\special{pa 2190 1600}%
\special{pa 990 1600}%
\special{pa 990 1400}%
\special{pa 2190 1400}%
\special{pa 2190 1600}%
\special{ip}%
%
\special{pn 8}%
\special{sh 0.600}%
\special{pa 2190 800}%
\special{pa 990 800}%
\special{pa 990 600}%
\special{pa 2190 600}%
\special{pa 2190 800}%
\special{ip}%
%
\special{pn 8}%
\special{sh 0.600}%
\special{pa 990 1600}%
\special{pa 790 1600}%
\special{pa 790 600}%
\special{pa 990 600}%
\special{pa 990 1600}%
\special{ip}%
%
\special{pn 8}%
\special{pa 590 1800}%
\special{pa 590 400}%
\special{fp}%
\special{sh 1}%
\special{pa 590 400}%
\special{pa 570 468}%
\special{pa 590 454}%
\special{pa 610 468}%
\special{pa 590 400}%
\special{fp}%
\special{pa 590 1800}%
\special{pa 2590 1800}%
\special{fp}%
\special{sh 1}%
\special{pa 2590 1800}%
\special{pa 2524 1780}%
\special{pa 2538 1800}%
\special{pa 2524 1820}%
\special{pa 2590 1800}%
\special{fp}%
%
\special{pn 8}%
\special{pa 790 1800}%
\special{pa 790 1600}%
\special{dt 0.045}%
\special{pa 990 1800}%
\special{pa 990 1600}%
\special{dt 0.045}%
\special{pa 2190 1800}%
\special{pa 2190 1600}%
\special{dt 0.045}%
\special{pa 790 1600}%
\special{pa 590 1600}%
\special{dt 0.045}%
\special{pa 790 1400}%
\special{pa 590 1400}%
\special{dt 0.045}%
\special{pa 790 600}%
\special{pa 590 600}%
\special{dt 0.045}%
\special{pa 790 800}%
\special{pa 590 800}%
\special{dt 0.045}%
%
\special{pn 8}%
\special{pa 2390 1100}%
\special{pa 2190 1100}%
\special{fp}%
\special{sh 1}%
\special{pa 2190 1100}%
\special{pa 2258 1120}%
\special{pa 2244 1100}%
\special{pa 2258 1080}%
\special{pa 2190 1100}%
\special{fp}%
\put(24.1000,-11.4500){\makebox(0,0)[lb]{A copy of $P_s$}}%
\put(21.2000,-19.5000){\makebox(0,0)[lb]{$\gamma_s^{\max}$}}%
\put(9.2000,-19.5000){\makebox(0,0)[lb]{$\gamma_s^{\min}$}}%
%
\special{pn 8}%
\special{pa 890 1740}%
\special{pa 990 1740}%
\special{fp}%
\special{sh 1}%
\special{pa 990 1740}%
\special{pa 924 1720}%
\special{pa 938 1740}%
\special{pa 924 1760}%
\special{pa 990 1740}%
\special{fp}%
\special{pa 890 1740}%
\special{pa 790 1740}%
\special{fp}%
\special{sh 1}%
\special{pa 790 1740}%
\special{pa 858 1760}%
\special{pa 844 1740}%
\special{pa 858 1720}%
\special{pa 790 1740}%
\special{fp}%
\put(8.7000,-17.0000){\makebox(0,0)[lb]{$\zeta$}}%
\put(1.4500,-16.5000){\makebox(0,0)[lb]{$y+zl_s^-$}}%
\put(1.4500,-14.6500){\makebox(0,0)[lb]{$y+zl_s$}}%
\put(1.4500,-8.6500){\makebox(0,0)[lb]{$y+zr_s$}}%
\put(1.4000,-6.6000){\makebox(0,0)[lb]{$y+zr_s^+$}}%
\put(13.9000,-5.5000){\makebox(0,0)[lb]{$Z_s^{\rm st}$}}%
\put(8.2000,-5.5000){\makebox(0,0)[lb]{$Z_s^{\rm end}$}}%
\end{picture}%
  \end{center}
 \vspace{-0.5em}
\caption{The active block $Z_s^{\rm st}\cup Z_s^{\rm end}$ at stage $s$.}
  \label{fig:mthm2}
\end{figure}


\noindent
{\bf Non-injured Case.}
First we consider the case $[\gamma^{\min}_{s+1},\gamma^{\max}_{s+1}]\subseteq[\gamma^{\min}_s,\gamma^{\max}_s]$, i.e., this is the case that our construction is {\em not injured} at stage $s+1$.
In this case, we construct $(s+1)$-blocks in the active block $Z_s^{\rm st}\cup Z_s^{\rm end}$.
We will define $Z_t(s,i,j)$ and ${\rm Box}(s,i,j)={\rm Box}(Z_t(s,i,j))$ for each $j<6$.
{\em The first two corner blocks} at stage $t\geq s+1$ are defined by:
\begin{align*}
{\rm Box}(s,0)=&[\gamma_s^{\min}-\zeta,\gamma_s^{\min}]\times[y+zl_s^-,y+zr_s^*],\\
Z_t(s,0)=&[\llcorner]^s_t([\gamma_s^{\min}-\zeta,\gamma_s^{\min}]\times[y+zl_s^-,y+zr_s^+])\cap{\rm Box}(s,0),\\
{\rm Box}(s,1)=&[\gamma_s^{\min}-\zeta,\gamma_s^{\min}]\times[y+zr_s^*,y+zr_s^+],\\
Z_t(s,1)=&[\ulcorner]^s_t({\rm Box}(s,1)).
\end{align*}

\begin{sublem}\label{sublem:1}
$Z_t(s,0)\cup Z_t(s,1)\subseteq Z_s^{\rm end}$ for any $t\geq s+1$.
\end{sublem}

\begin{sublem}\label{sublem:2}
$Z_{s,t}^{\rm st}\touch{[\leftarrow]}Z_t(s,0)\touch{[\uparrow]}Z_t(s,1)$, for any $t\geq s+1$.
\end{sublem}


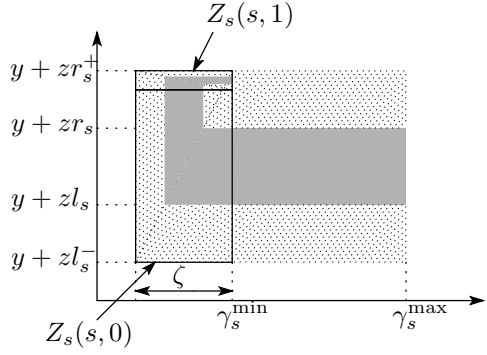
\begin{figure}[t]\centering
  \begin{center}
\unitlength 0.1in
\begin{picture}( 24.5000, 16.9000)(  1.0000,-17.9500)
%
\special{pn 8}%
\special{sh 0.300}%
\special{pa 1250 796}%
\special{pa 2150 796}%
\special{pa 2150 1196}%
\special{pa 1250 1196}%
\special{pa 1250 796}%
\special{ip}%
%
\special{pn 8}%
\special{pa 550 1696}%
\special{pa 550 296}%
\special{fp}%
\special{sh 1}%
\special{pa 550 296}%
\special{pa 530 362}%
\special{pa 550 348}%
\special{pa 570 362}%
\special{pa 550 296}%
\special{fp}%
\special{pa 550 1696}%
\special{pa 2550 1696}%
\special{fp}%
\special{sh 1}%
\special{pa 2550 1696}%
\special{pa 2484 1676}%
\special{pa 2498 1696}%
\special{pa 2484 1716}%
\special{pa 2550 1696}%
\special{fp}%
\put(20.8000,-18.4500){\makebox(0,0)[lb]{$\gamma_s^{\max}$}}%
\put(11.7000,-18.4500){\makebox(0,0)[lb]{$\gamma_s^{\min}$}}%
\put(9.4000,-16.3000){\makebox(0,0)[lb]{$\zeta$}}%
\put(1.0500,-15.4500){\makebox(0,0)[lb]{$y+zl_s^-$}}%
\put(1.0500,-12.3000){\makebox(0,0)[lb]{$y+zl_s$}}%
\put(1.0500,-8.3500){\makebox(0,0)[lb]{$y+zr_s$}}%
\put(1.0000,-5.5500){\makebox(0,0)[lb]{$y+zr_s^+$}}%
%
\special{pn 4}%
\special{pa 2150 1196}%
\special{pa 2150 1496}%
\special{pa 1250 1496}%
\special{pa 1250 1196}%
\special{pa 1250 1196}%
\special{pa 2150 1196}%
\special{ip}%
%
\special{pn 4}%
\special{pa 2150 496}%
\special{pa 2150 796}%
\special{pa 1250 796}%
\special{pa 1250 496}%
\special{pa 1250 496}%
\special{pa 2150 496}%
\special{ip}%
%
\special{pn 8}%
\special{pa 1250 1496}%
\special{pa 750 1496}%
\special{pa 750 496}%
\special{pa 1250 496}%
\special{pa 1250 496}%
\special{pa 1250 1496}%
\special{ip}%
%
\special{pn 4}%
\special{pa 1750 496}%
\special{pa 1450 796}%
\special{dt 0.027}%
\special{pa 1720 496}%
\special{pa 1420 796}%
\special{dt 0.027}%
\special{pa 1690 496}%
\special{pa 1390 796}%
\special{dt 0.027}%
\special{pa 1660 496}%
\special{pa 1360 796}%
\special{dt 0.027}%
\special{pa 1630 496}%
\special{pa 1330 796}%
\special{dt 0.027}%
\special{pa 1600 496}%
\special{pa 1300 796}%
\special{dt 0.027}%
\special{pa 1570 496}%
\special{pa 1270 796}%
\special{dt 0.027}%
\special{pa 1540 496}%
\special{pa 1250 786}%
\special{dt 0.027}%
\special{pa 1510 496}%
\special{pa 1250 756}%
\special{dt 0.027}%
\special{pa 1480 496}%
\special{pa 1250 726}%
\special{dt 0.027}%
\special{pa 1450 496}%
\special{pa 1250 696}%
\special{dt 0.027}%
\special{pa 1420 496}%
\special{pa 1250 666}%
\special{dt 0.027}%
\special{pa 1390 496}%
\special{pa 1250 636}%
\special{dt 0.027}%
\special{pa 1360 496}%
\special{pa 1250 606}%
\special{dt 0.027}%
\special{pa 1330 496}%
\special{pa 1250 576}%
\special{dt 0.027}%
\special{pa 1300 496}%
\special{pa 1250 546}%
\special{dt 0.027}%
\special{pa 1270 496}%
\special{pa 1250 516}%
\special{dt 0.027}%
\special{pa 1780 496}%
\special{pa 1480 796}%
\special{dt 0.027}%
\special{pa 1810 496}%
\special{pa 1510 796}%
\special{dt 0.027}%
\special{pa 1840 496}%
\special{pa 1540 796}%
\special{dt 0.027}%
\special{pa 1870 496}%
\special{pa 1570 796}%
\special{dt 0.027}%
\special{pa 1900 496}%
\special{pa 1600 796}%
\special{dt 0.027}%
\special{pa 1930 496}%
\special{pa 1630 796}%
\special{dt 0.027}%
\special{pa 1960 496}%
\special{pa 1660 796}%
\special{dt 0.027}%
\special{pa 1990 496}%
\special{pa 1690 796}%
\special{dt 0.027}%
\special{pa 2020 496}%
\special{pa 1720 796}%
\special{dt 0.027}%
\special{pa 2050 496}%
\special{pa 1750 796}%
\special{dt 0.027}%
\special{pa 2080 496}%
\special{pa 1780 796}%
\special{dt 0.027}%
\special{pa 2110 496}%
\special{pa 1810 796}%
\special{dt 0.027}%
\special{pa 2140 496}%
\special{pa 1840 796}%
\special{dt 0.027}%
%
\special{pn 4}%
\special{pa 2150 516}%
\special{pa 1870 796}%
\special{dt 0.027}%
\special{pa 2150 546}%
\special{pa 1900 796}%
\special{dt 0.027}%
\special{pa 2150 576}%
\special{pa 1930 796}%
\special{dt 0.027}%
\special{pa 2150 606}%
\special{pa 1960 796}%
\special{dt 0.027}%
\special{pa 2150 636}%
\special{pa 1990 796}%
\special{dt 0.027}%
\special{pa 2150 666}%
\special{pa 2020 796}%
\special{dt 0.027}%
\special{pa 2150 696}%
\special{pa 2050 796}%
\special{dt 0.027}%
\special{pa 2150 726}%
\special{pa 2080 796}%
\special{dt 0.027}%
\special{pa 2150 756}%
\special{pa 2110 796}%
\special{dt 0.027}%
%
\special{pn 4}%
\special{pa 1860 1196}%
\special{pa 1560 1496}%
\special{dt 0.027}%
\special{pa 1830 1196}%
\special{pa 1530 1496}%
\special{dt 0.027}%
\special{pa 1800 1196}%
\special{pa 1500 1496}%
\special{dt 0.027}%
\special{pa 1770 1196}%
\special{pa 1470 1496}%
\special{dt 0.027}%
\special{pa 1740 1196}%
\special{pa 1440 1496}%
\special{dt 0.027}%
\special{pa 1710 1196}%
\special{pa 1410 1496}%
\special{dt 0.027}%
\special{pa 1680 1196}%
\special{pa 1380 1496}%
\special{dt 0.027}%
\special{pa 1650 1196}%
\special{pa 1350 1496}%
\special{dt 0.027}%
\special{pa 1620 1196}%
\special{pa 1320 1496}%
\special{dt 0.027}%
\special{pa 1590 1196}%
\special{pa 1290 1496}%
\special{dt 0.027}%
\special{pa 1560 1196}%
\special{pa 1260 1496}%
\special{dt 0.027}%
\special{pa 1530 1196}%
\special{pa 1250 1476}%
\special{dt 0.027}%
\special{pa 1500 1196}%
\special{pa 1250 1446}%
\special{dt 0.027}%
\special{pa 1470 1196}%
\special{pa 1250 1416}%
\special{dt 0.027}%
\special{pa 1440 1196}%
\special{pa 1250 1386}%
\special{dt 0.027}%
\special{pa 1410 1196}%
\special{pa 1250 1356}%
\special{dt 0.027}%
\special{pa 1380 1196}%
\special{pa 1250 1326}%
\special{dt 0.027}%
\special{pa 1350 1196}%
\special{pa 1250 1296}%
\special{dt 0.027}%
\special{pa 1320 1196}%
\special{pa 1250 1266}%
\special{dt 0.027}%
\special{pa 1290 1196}%
\special{pa 1250 1236}%
\special{dt 0.027}%
\special{pa 1890 1196}%
\special{pa 1590 1496}%
\special{dt 0.027}%
\special{pa 1920 1196}%
\special{pa 1620 1496}%
\special{dt 0.027}%
\special{pa 1950 1196}%
\special{pa 1650 1496}%
\special{dt 0.027}%
\special{pa 1980 1196}%
\special{pa 1680 1496}%
\special{dt 0.027}%
\special{pa 2010 1196}%
\special{pa 1710 1496}%
\special{dt 0.027}%
\special{pa 2040 1196}%
\special{pa 1740 1496}%
\special{dt 0.027}%
\special{pa 2070 1196}%
\special{pa 1770 1496}%
\special{dt 0.027}%
\special{pa 2100 1196}%
\special{pa 1800 1496}%
\special{dt 0.027}%
\special{pa 2130 1196}%
\special{pa 1830 1496}%
\special{dt 0.027}%
\special{pa 2150 1206}%
\special{pa 1860 1496}%
\special{dt 0.027}%
%
\special{pn 4}%
\special{pa 2150 1236}%
\special{pa 1890 1496}%
\special{dt 0.027}%
\special{pa 2150 1266}%
\special{pa 1920 1496}%
\special{dt 0.027}%
\special{pa 2150 1296}%
\special{pa 1950 1496}%
\special{dt 0.027}%
\special{pa 2150 1326}%
\special{pa 1980 1496}%
\special{dt 0.027}%
\special{pa 2150 1356}%
\special{pa 2010 1496}%
\special{dt 0.027}%
\special{pa 2150 1386}%
\special{pa 2040 1496}%
\special{dt 0.027}%
\special{pa 2150 1416}%
\special{pa 2070 1496}%
\special{dt 0.027}%
\special{pa 2150 1446}%
\special{pa 2100 1496}%
\special{dt 0.027}%
\special{pa 2150 1476}%
\special{pa 2130 1496}%
\special{dt 0.027}%
%
\special{pn 4}%
\special{pa 1250 1116}%
\special{pa 870 1496}%
\special{dt 0.027}%
\special{pa 1250 1086}%
\special{pa 840 1496}%
\special{dt 0.027}%
\special{pa 1250 1056}%
\special{pa 810 1496}%
\special{dt 0.027}%
\special{pa 1250 1026}%
\special{pa 780 1496}%
\special{dt 0.027}%
\special{pa 1250 996}%
\special{pa 756 1490}%
\special{dt 0.027}%
\special{pa 1250 966}%
\special{pa 750 1466}%
\special{dt 0.027}%
\special{pa 1250 936}%
\special{pa 750 1436}%
\special{dt 0.027}%
\special{pa 1250 906}%
\special{pa 750 1406}%
\special{dt 0.027}%
\special{pa 1250 876}%
\special{pa 750 1376}%
\special{dt 0.027}%
\special{pa 1250 846}%
\special{pa 750 1346}%
\special{dt 0.027}%
\special{pa 1250 816}%
\special{pa 750 1316}%
\special{dt 0.027}%
\special{pa 1250 786}%
\special{pa 750 1286}%
\special{dt 0.027}%
\special{pa 1250 756}%
\special{pa 750 1256}%
\special{dt 0.027}%
\special{pa 1250 726}%
\special{pa 750 1226}%
\special{dt 0.027}%
\special{pa 1250 696}%
\special{pa 750 1196}%
\special{dt 0.027}%
\special{pa 1250 666}%
\special{pa 750 1166}%
\special{dt 0.027}%
\special{pa 1250 636}%
\special{pa 750 1136}%
\special{dt 0.027}%
\special{pa 1250 606}%
\special{pa 750 1106}%
\special{dt 0.027}%
\special{pa 1250 576}%
\special{pa 750 1076}%
\special{dt 0.027}%
\special{pa 1250 546}%
\special{pa 750 1046}%
\special{dt 0.027}%
\special{pa 1250 516}%
\special{pa 750 1016}%
\special{dt 0.027}%
\special{pa 1240 496}%
\special{pa 750 986}%
\special{dt 0.027}%
\special{pa 1210 496}%
\special{pa 750 956}%
\special{dt 0.027}%
\special{pa 1180 496}%
\special{pa 750 926}%
\special{dt 0.027}%
\special{pa 1150 496}%
\special{pa 750 896}%
\special{dt 0.027}%
\special{pa 1120 496}%
\special{pa 750 866}%
\special{dt 0.027}%
\special{pa 1090 496}%
\special{pa 750 836}%
\special{dt 0.027}%
\special{pa 1060 496}%
\special{pa 750 806}%
\special{dt 0.027}%
\special{pa 1030 496}%
\special{pa 750 776}%
\special{dt 0.027}%
\special{pa 1000 496}%
\special{pa 750 746}%
\special{dt 0.027}%
%
\special{pn 4}%
\special{pa 970 496}%
\special{pa 750 716}%
\special{dt 0.027}%
\special{pa 940 496}%
\special{pa 750 686}%
\special{dt 0.027}%
\special{pa 910 496}%
\special{pa 750 656}%
\special{dt 0.027}%
\special{pa 880 496}%
\special{pa 750 626}%
\special{dt 0.027}%
\special{pa 850 496}%
\special{pa 750 596}%
\special{dt 0.027}%
\special{pa 820 496}%
\special{pa 750 566}%
\special{dt 0.027}%
\special{pa 790 496}%
\special{pa 750 536}%
\special{dt 0.027}%
\special{pa 1250 1146}%
\special{pa 900 1496}%
\special{dt 0.027}%
\special{pa 1250 1176}%
\special{pa 930 1496}%
\special{dt 0.027}%
\special{pa 1250 1206}%
\special{pa 960 1496}%
\special{dt 0.027}%
\special{pa 1250 1236}%
\special{pa 990 1496}%
\special{dt 0.027}%
\special{pa 1250 1266}%
\special{pa 1020 1496}%
\special{dt 0.027}%
\special{pa 1250 1296}%
\special{pa 1050 1496}%
\special{dt 0.027}%
\special{pa 1250 1326}%
\special{pa 1080 1496}%
\special{dt 0.027}%
\special{pa 1250 1356}%
\special{pa 1110 1496}%
\special{dt 0.027}%
\special{pa 1250 1386}%
\special{pa 1140 1496}%
\special{dt 0.027}%
\special{pa 1250 1416}%
\special{pa 1170 1496}%
\special{dt 0.027}%
\special{pa 1250 1446}%
\special{pa 1200 1496}%
\special{dt 0.027}%
\special{pa 1250 1476}%
\special{pa 1230 1496}%
\special{dt 0.027}%
%
\special{pn 4}%
\special{pa 750 1496}%
\special{pa 1250 496}%
\special{dt 0.027}%
\special{pa 750 496}%
\special{pa 1250 596}%
\special{dt 0.027}%
%
\special{pn 4}%
\special{sh 0.300}%
\special{pa 1250 796}%
\special{pa 1100 796}%
\special{pa 900 1196}%
\special{pa 1250 1196}%
\special{pa 1250 1196}%
\special{pa 1250 796}%
\special{ip}%
%
\special{pn 8}%
\special{sh 0.300}%
\special{pa 900 1196}%
\special{pa 900 526}%
\special{pa 1100 566}%
\special{pa 1100 796}%
\special{pa 1100 796}%
\special{pa 900 1196}%
\special{ip}%
%
\special{pn 8}%
\special{sh 0.300}%
\special{pa 900 526}%
\special{pa 1100 566}%
\special{pa 1250 566}%
\special{pa 1250 526}%
\special{pa 900 526}%
\special{pa 900 526}%
\special{pa 900 526}%
\special{ip}%
%
\special{pn 8}%
\special{pa 550 1196}%
\special{pa 750 1196}%
\special{dt 0.045}%
\special{pa 550 1496}%
\special{pa 750 1496}%
\special{dt 0.045}%
\special{pa 550 796}%
\special{pa 750 796}%
\special{dt 0.045}%
\special{pa 550 496}%
\special{pa 750 496}%
\special{dt 0.045}%
\special{pa 1250 1496}%
\special{pa 1250 1696}%
\special{dt 0.045}%
\special{pa 750 1496}%
\special{pa 750 1696}%
\special{dt 0.045}%
\special{pa 2150 1496}%
\special{pa 2150 1696}%
\special{dt 0.045}%
%
\special{pn 8}%
\special{pa 1050 1646}%
\special{pa 1250 1646}%
\special{fp}%
\special{sh 1}%
\special{pa 1250 1646}%
\special{pa 1184 1626}%
\special{pa 1198 1646}%
\special{pa 1184 1666}%
\special{pa 1250 1646}%
\special{fp}%
\special{pa 1050 1646}%
\special{pa 750 1646}%
\special{fp}%
\special{sh 1}%
\special{pa 750 1646}%
\special{pa 818 1666}%
\special{pa 804 1646}%
\special{pa 818 1626}%
\special{pa 750 1646}%
\special{fp}%
%
\special{pn 8}%
\special{pa 1250 596}%
\special{pa 750 596}%
\special{pa 750 1496}%
\special{pa 1250 1496}%
\special{pa 1250 596}%
\special{fp}%
%
\special{pn 8}%
\special{pa 750 496}%
\special{pa 1250 496}%
\special{pa 1250 596}%
\special{pa 750 596}%
\special{pa 750 496}%
\special{fp}%
%
\special{pn 8}%
\special{pa 1150 296}%
\special{pa 1050 496}%
\special{fp}%
\special{sh 1}%
\special{pa 1050 496}%
\special{pa 1098 444}%
\special{pa 1074 448}%
\special{pa 1062 426}%
\special{pa 1050 496}%
\special{fp}%
\put(11.1500,-2.7500){\makebox(0,0)[lb]{$Z_s(s,1)$}}%
%
\special{pn 8}%
\special{pa 550 1796}%
\special{pa 850 1496}%
\special{fp}%
\special{sh 1}%
\special{pa 850 1496}%
\special{pa 790 1528}%
\special{pa 812 1534}%
\special{pa 818 1556}%
\special{pa 850 1496}%
\special{fp}%
\put(2.8000,-19.4000){\makebox(0,0)[lb]{$Z_s(s,0)$}}%
\end{picture}%
  \end{center}
 \vspace{-0.5em}
\caption{The first two corner blocks $Z_s(s,0)$ and $Z_s(s,1)$.}
  \label{fig:mthm3}
\end{figure}

The next block is {\em a straight block from $\gamma_s^{\min}$ to $\gamma_{s+1}^{\max}$} which is defined as follows:
\begin{align*}
{\rm Box}(s,2)=&[\gamma_s^{\min},\gamma_s^{\max}]\times[y+zr_s^*,y+zr_s^+].\\
Z_t(s,2)=&[-]({\rm Box}_t(s,2)).
\end{align*}
For given $a,b,\alpha,\beta\in\mathbb{Q}$, we can calculate $N_{0,s}(a,b;\alpha,\beta)$ and $N_{1,s}(a,b;\alpha,\beta)$ satisfying $N_{0,s}(a,b;\alpha,\beta)+N_{1,s}(a,b;\alpha,\beta)\cdot l_s^-=a+b\alpha$, and $N_{0,s}(a,b;\alpha,\beta)+N_{1,s}(a,b;\alpha,\beta)\cdot r_s^+=a+b\beta$.
Put $y^{\star}=N_{0,s}(y,z;r_s^*,r_s^+)$, and $z^{\star}=N_{1,s}(y,z;r_s^*,r_s^+)$.

\begin{sublem}\label{sublem:b1}
${\rm Box}(s,2)=[\gamma_s^{\min},\gamma_s^{\max}]\times[y^{\star}+z^{\star}l_s^-,y^{\star}+z^{\star}r_s^+]$.
\end{sublem}

Put $\zeta^{\star}=(\gamma_s^{\max}-\gamma_{s+1}^{\max})/3^s$.
Note that $\zeta^{\star}>0$ since $\gamma_s^{\max}>\gamma_{s+1}^{\max}$.
We then again define {\em corner blocks}.
\begin{align*}
{\rm Box}(s,3)=&[\gamma_{s+1}^{\max},\gamma_{s+1}^{\max}+\zeta^{\star}]\times[y^{\star}+z^{\star}l_s^-,y^{\star}+z^{\star}r_s^*],\\
Z_t(s,3)=&[\lrcorner]^s_t([\gamma_{s+1}^{\max},\gamma_{s+1}^{\max}+\zeta^{\star}]\times[y^{\star}+z^{\star}l_s^-,y^{\star}+z^{\star}r_s^+])\cap{\rm Box}(s,3),\\
{\rm Box}(s,4)=&[\gamma_{s+1}^{\max},\gamma_{s+1}^{\max}+\zeta^{\star}]\times[y^{\star}+z^{\star}r_s^*,y^{\star}+z^{\star}r_s^+],\\
Z_t(s,4)=&[\urcorner]^s_t({\rm Box}(s,4)).
\end{align*}

Next, {\em a straight block from $\gamma_s^{\min}$ to $\gamma_{s+1}^{\max}$} is defined as follows:
\begin{align*}
{\rm Box}(s,5)&=[\gamma_{s+1}^{\min},\gamma_{s+1}^{\max}]\times[y^{\star}+z^{\star}r_s^*,y^{\star}+z^{\star}r_s^+],\\
Z_t(s,5)&=[-]^s_t[{\rm Box}(s,5)].
\end{align*}

Put $y^{\star\star}=N_{0,s}(y^{\star},z^{\star};r_s^*,r_s^+)$, and $z^{\star\star}=N_{1,s}(y^{\star},z^{\star};r_s^*,r_s^+)$.

\begin{sublem}\label{sublem:b2}
${\rm Box}(s,5)=[\gamma_s^{\min},\gamma_s^{\max}]\times[y^{\star\star}+z^{\star\star}l_s^-,y^{\star\star}+z^{\star\star}r_s^+]$.
\end{sublem}

Put $\zeta^{\star\star}=(\gamma_{s+1}^{\min}-\gamma_{s}^{\min})/3^s$.
Note that $\zeta^{\star\star}>0$ since $\gamma_{s+1}^{\min}>\gamma_{s}^{\max}$.
{\em The end box at stage $s+1$} is:
\[Z(s,6)=[\gamma_{s+1}^{\min}-\zeta^{\star\star},\gamma_{s+1}^{\min}]\times[y^{\star\star}+z^{\star\star}l_s^-,y^{\star\star}+z^{\star\star}r_s^+].\]
Then put $Z_{s+1,t}^{\rm st}=Z_{t}(s,5)$, $Z_{s+1}^{\rm st}=Z_{s+1,s+1}^{\rm st}$, and $Z_{s+1}^{\rm end}=Z(s,6)$.
{\em The active block at stage $s+1$} is the set $Z_{s+1,s+1}^{\rm st}\cup Z_{s+1}^{\rm end}$, and {\em the collection of $(s+1)$-blocks at stage $t$} is defined by $\mathcal{Z}_t(s+1)=\{Z_t(s,i):i\leq 5\}$.
Clearly, our definition satisfies the induction hypothesis (IH3) at stage $s+1$. 

\begin{figure}[t]\centering
 \begin{minipage}{0.48\hsize}
  \begin{center}
\unitlength 0.1in
\begin{picture}( 20.0000, 14.0000)(  6.0000,-16.0000)
%
\special{pn 8}%
\special{sh 0.600}%
\special{pa 2200 1400}%
\special{pa 990 1400}%
\special{pa 990 1200}%
\special{pa 2200 1200}%
\special{pa 2200 1400}%
\special{ip}%
%
\special{pn 8}%
\special{sh 0.600}%
\special{pa 2200 600}%
\special{pa 990 600}%
\special{pa 990 400}%
\special{pa 2200 400}%
\special{pa 2200 600}%
\special{ip}%
%
\special{pn 8}%
\special{sh 0.600}%
\special{pa 1000 1400}%
\special{pa 800 1400}%
\special{pa 800 400}%
\special{pa 1000 400}%
\special{pa 1000 1400}%
\special{ip}%
%
\special{pn 8}%
\special{sh 0.300}%
\special{pa 868 600}%
\special{pa 2200 600}%
\special{pa 2200 1200}%
\special{pa 868 1200}%
\special{pa 868 600}%
\special{ip}%
%
\special{pn 8}%
\special{sh 0.300}%
\special{pa 868 600}%
\special{pa 934 600}%
\special{pa 934 468}%
\special{pa 868 468}%
\special{pa 868 600}%
\special{ip}%
%
\special{pn 8}%
\special{sh 0.300}%
\special{pa 934 468}%
\special{pa 1800 468}%
\special{pa 1800 534}%
\special{pa 934 534}%
\special{pa 934 468}%
\special{ip}%
%
\special{pn 8}%
\special{sh 0.300}%
\special{pa 1800 468}%
\special{pa 1768 468}%
\special{pa 1768 418}%
\special{pa 1800 418}%
\special{pa 1800 468}%
\special{ip}%
%
\special{pn 8}%
\special{sh 0.300}%
\special{pa 1768 434}%
\special{pa 1200 434}%
\special{pa 1200 418}%
\special{pa 1768 418}%
\special{pa 1768 434}%
\special{ip}%
%
\special{pn 8}%
\special{pa 600 1600}%
\special{pa 600 200}%
\special{fp}%
\special{sh 1}%
\special{pa 600 200}%
\special{pa 580 268}%
\special{pa 600 254}%
\special{pa 620 268}%
\special{pa 600 200}%
\special{fp}%
\special{pa 600 1600}%
\special{pa 2600 1600}%
\special{fp}%
\special{sh 1}%
\special{pa 2600 1600}%
\special{pa 2534 1580}%
\special{pa 2548 1600}%
\special{pa 2534 1620}%
\special{pa 2600 1600}%
\special{fp}%
%
\special{pn 8}%
\special{pa 2200 1600}%
\special{pa 2200 1400}%
\special{dt 0.045}%
\special{pa 2200 400}%
\special{pa 2200 200}%
\special{dt 0.045}%
\special{pa 1000 1600}%
\special{pa 1000 1400}%
\special{dt 0.045}%
\special{pa 1000 400}%
\special{pa 1000 200}%
\special{dt 0.045}%
%
\special{pn 8}%
\special{pa 1200 1600}%
\special{pa 1200 1400}%
\special{dt 0.045}%
\special{pa 1200 400}%
\special{pa 1200 200}%
\special{dt 0.045}%
\put(21.3500,-17.6000){\makebox(0,0)[lb]{$\gamma_s^{\max}$}}%
\put(9.3000,-17.6000){\makebox(0,0)[lb]{$\gamma_s^{\min}$}}%
\put(11.6500,-17.6500){\makebox(0,0)[lb]{$\gamma_{s+1}^{\min}$}}%
\put(16.6500,-17.6500){\makebox(0,0)[lb]{$\gamma_{s+1}^{\max}$}}%
%
\special{pn 8}%
\special{pa 1740 1600}%
\special{pa 1740 1400}%
\special{dt 0.045}%
\special{pa 1740 400}%
\special{pa 1740 200}%
\special{dt 0.045}%
\end{picture}%
  \end{center}
 \vspace{-0.5em}
\caption{$Z_s(s-1,5)\cup\bigcup\mathcal{Z}_s(s+1)$.}
  \label{fig:mthm4}
 \end{minipage}
 \begin{minipage}{0.48\hsize}
  \begin{center}
\unitlength 0.1in
\begin{picture}( 25.4000, 14.0000)(  1.6000,-16.0000)
%
\special{pn 8}%
\special{sh 0.600}%
\special{pa 2300 608}%
\special{pa 980 608}%
\special{pa 980 1202}%
\special{pa 2300 1202}%
\special{pa 2300 608}%
\special{ip}%
%
\special{pn 8}%
\special{sh 0.600}%
\special{pa 980 610}%
\special{pa 1046 610}%
\special{pa 1046 478}%
\special{pa 980 478}%
\special{pa 980 610}%
\special{ip}%
%
\special{pn 8}%
\special{sh 0.600}%
\special{pa 1046 476}%
\special{pa 1904 476}%
\special{pa 1904 540}%
\special{pa 1046 540}%
\special{pa 1046 476}%
\special{ip}%
%
\special{pn 8}%
\special{sh 0.600}%
\special{pa 1904 476}%
\special{pa 1872 476}%
\special{pa 1872 426}%
\special{pa 1904 426}%
\special{pa 1904 476}%
\special{ip}%
%
\special{pn 8}%
\special{sh 0.600}%
\special{pa 1872 442}%
\special{pa 1304 442}%
\special{pa 1304 426}%
\special{pa 1872 426}%
\special{pa 1872 442}%
\special{ip}%
%
\special{pn 8}%
\special{sh 0.300}%
\special{pa 2300 706}%
\special{pa 998 706}%
\special{pa 998 1102}%
\special{pa 2300 1102}%
\special{pa 2300 706}%
\special{ip}%
%
\special{pn 8}%
\special{sh 0.300}%
\special{pa 1026 486}%
\special{pa 1890 486}%
\special{pa 1890 522}%
\special{pa 1026 522}%
\special{pa 1026 486}%
\special{ip}%
%
\special{pn 8}%
\special{sh 0.300}%
\special{pa 1894 488}%
\special{pa 1882 488}%
\special{pa 1882 428}%
\special{pa 1894 428}%
\special{pa 1894 488}%
\special{ip}%
%
\special{pn 8}%
\special{sh 0.300}%
\special{pa 1882 428}%
\special{pa 1310 428}%
\special{pa 1310 436}%
\special{pa 1882 436}%
\special{pa 1882 428}%
\special{ip}%
%
\special{pn 8}%
\special{pa 700 1600}%
\special{pa 700 200}%
\special{fp}%
\special{sh 1}%
\special{pa 700 200}%
\special{pa 680 268}%
\special{pa 700 254}%
\special{pa 720 268}%
\special{pa 700 200}%
\special{fp}%
\special{pa 700 1600}%
\special{pa 2700 1600}%
\special{fp}%
\special{sh 1}%
\special{pa 2700 1600}%
\special{pa 2634 1580}%
\special{pa 2648 1600}%
\special{pa 2634 1620}%
\special{pa 2700 1600}%
\special{fp}%
\put(22.3500,-17.6000){\makebox(0,0)[lb]{$\gamma_s^{\max}$}}%
\put(10.3000,-17.6000){\makebox(0,0)[lb]{$\gamma_s^{\min}$}}%
\put(12.6500,-17.6500){\makebox(0,0)[lb]{$\gamma_{s+1}^{\min}$}}%
\put(17.6500,-17.6500){\makebox(0,0)[lb]{$\gamma_{s+1}^{\max}$}}%
%
\special{pn 8}%
\special{pa 2300 1600}%
\special{pa 2300 1200}%
\special{dt 0.045}%
\special{pa 2300 600}%
\special{pa 2300 200}%
\special{dt 0.045}%
\special{pa 1100 1600}%
\special{pa 1100 1200}%
\special{dt 0.045}%
\special{pa 1100 600}%
\special{pa 1100 540}%
\special{dt 0.045}%
\special{pa 1100 470}%
\special{pa 1100 200}%
\special{dt 0.045}%
%
\special{pn 8}%
\special{pa 1840 1600}%
\special{pa 1840 1200}%
\special{dt 0.045}%
\special{pa 1840 600}%
\special{pa 1840 540}%
\special{dt 0.045}%
\special{pa 1840 470}%
\special{pa 1840 440}%
\special{dt 0.045}%
\special{pa 1840 420}%
\special{pa 1840 200}%
\special{dt 0.045}%
%
\special{pn 8}%
\special{pa 1300 1600}%
\special{pa 1300 1200}%
\special{dt 0.045}%
\special{pa 1300 600}%
\special{pa 1300 540}%
\special{dt 0.045}%
\special{pa 1300 470}%
\special{pa 1300 200}%
\special{dt 0.045}%
\put(14.0000,-9.4500){\makebox(0,0)[lb]{A copy of $P_{s+1}$}}%
%
\special{pn 8}%
\special{pa 700 1200}%
\special{pa 980 1200}%
\special{dt 0.045}%
\special{pa 976 600}%
\special{pa 700 600}%
\special{dt 0.045}%
\put(1.6000,-12.6000){\makebox(0,0)[lb]{$y+zl_{s+1}^-$}}%
\put(1.6000,-6.7000){\makebox(0,0)[lb]{$y+zl_{s+1}^+$}}%
%
\special{pn 8}%
\special{sh 0.300}%
\special{pa 1030 710}%
\special{pa 998 710}%
\special{pa 998 492}%
\special{pa 1030 492}%
\special{pa 1030 710}%
\special{ip}%
\end{picture}%
  \end{center}
 \vspace{-0.5em}
\caption{$Z_{s+1}(s-1,5)\cup\bigcup\mathcal{Z}_{s+1}(s+1)$.}
  \label{fig:mthm5}
 \end{minipage}
\end{figure}

\begin{sublem}\label{sublem:3}
$Z_t(s,i)\subseteq Z_v(s,i)$ for each $t\geq v\geq s+1$ and $i\leq 5$.
\end{sublem}

\begin{sublem}\label{sublem:4}
For any $t\geq s+1$,
\[Z_{s,t}^{\rm st}\touch{[\leftarrow]}Z_t(s,0)\touch{[\uparrow]}Z_t(s,1)\touch{[\rightarrow]}Z_t(s,2)\touch{[\rightarrow]}Z_t(s,3)\touch{[\uparrow]}Z_t(s,4)\touch{[\leftarrow]}Z_t(s,5).\]
\end{sublem}

\begin{proof}\upshape
It follows straightforwardly from the definition of these blocks $Z_t(s,i)$, and Sublemma \ref{sublem:b1} and \ref{sublem:b2}.
\end{proof}

\begin{sublem}\label{sublem:5}
$\bigcup_{2\leq i\leq 6}Z_t(s,i)\subseteq Z_s^{\rm st}\cap [\gamma_s^{\min},\gamma_s^{\max}]\times (y+zr_s,y+zr_s^+]$.
Hence, $\left(\bigcup_{2\leq i\leq 6}Z_t(s,i)\right)\cap Z^{\rm st}_{s,s+1}=\emptyset$
\end{sublem}

Consequently, we can show the following extension property.

\begin{sublem}\label{sublem:6}
Assume that we have a computable function $f_s:\mathbb{R}^2\to\mathbb{R}^2$ such that $f_s\res\bigcup\bigcup_{u\leq s}\mathcal{Z}_{t}(u)$ is a computable homeomorphism between $\bigcup\bigcup_{u\leq s}\mathcal{Z}_{t}(u)$ and $P_t\times[1/(s+2),1]$ for any $t\geq s$.
Then we can effectively find a computable function $f_{s+1}:\mathbb{R}^2\to\mathbb{R}^2$ extending $f_s\res\bigcup\bigcup_{u\leq s}\mathcal{Z}_{s+1}(u)$ such that $f_{s+1}\res\bigcup\bigcup_{u\leq s+1}\mathcal{Z}_{t}(u)$ is a computable homeomorphism between $\bigcup\bigcup_{u\leq s+1}\mathcal{Z}_{t}(u)$ and $P_t\times[1/(s+3),1]$ for any $t\geq s+1$.
\end{sublem}

\begin{proof}\upshape
By Sublemma \ref{sublem:2}, \ref{sublem:4}, and \ref{sublem:5}.
\end{proof}

By Sublemma \ref{sublem:3} and \ref{sublem:6}, induction hypothesis (IH1) and (IH2) are satisfied.
Since $Z_{s+1}^{\rm end}\cup\bigcup\mathcal{Z}_{s+1}(s+1)\subseteq Z_s^{\rm st}\cup Z_s^{\rm end}$ by Sublemma \ref{sublem:1} and \ref{sublem:5}, and $\bigcup\mathcal{Z}_{s+1}(u)\subseteq\bigcup\mathcal{Z}_s(u)$ for each $u\leq s$, by induction hypothesis (IH1), we have the following:
\[Q_{s+1}=Z_{s+1}^{\rm end}\cup\bigcup\bigcup_{u\leq s+1}\mathcal{Z}_{s+1}(u)\subseteq Z_s^{\rm st}\cup Z_s^{\rm end}\cup\bigcup\bigcup_{u\leq s}\mathcal{Z}_{s}(u)\subseteq Q_s.\]

\noindent
{\bf Injured Case.}
Secondly we consider the case that our construction {\em is injured}.
This means that $[\gamma_{s+1}^{\min},\gamma_{s+1}^{\max}]\not\subseteq[\gamma_s^{\min},\gamma_s^{\max}]$.
In this case, indeed, we have $[\gamma_{s+1}^{\min},\gamma_{s+1}^{\max}]\cap[\gamma_s^{\min},\gamma_s^{\max}]=\emptyset$.
Fix the greatest stage $p\leq s$ such that $[\gamma_{s+1}^{\min},\gamma_{s+1}^{\max}]\subseteq[\gamma_p^{\min},\gamma_p^{\max}]$ occurs.
We again, inside the end box $Z_s^{\rm end}$ at stage $s$, define corner blocks $Z_t(s,0)$ and $Z_t(s,1)$ as non-injuring stage, whereas the construction of $Z_t(s,i)$ for $i\geq 2$ differs from non-injuring stage.
The end box of our construction at stage $s+1$ will turn back along all blocks belonging $\mathcal{Z}_s(u)$ for $p<u\leq s$ in the reverse ordering of $\prec$.
Let $\{Z_i:i<k_{s+1}\}$ be an enumeration of all blocks in $\mathcal{Z}_s(u)$ for $p<u\leq s$, under the reverse ordering of $\prec$.
In other words, $Z_i$ is the successor block of $Z_{i+1}$ under $\touch{}$, for each $i<k_{s+1}-1$.
There are two kind of blocks; one is {\em a straight block}, and another is {\em a corner block}.
We will define blocks $Z_t(s,i,j)$ for $i<k_{s+1}$ and $j<3$.
Now we check the direction $\lrangle{\delta_i,\varepsilon_i}$ of $Z_i$.
Here, we may consistently assume that the condition $Z_0\touch{[\leftarrow]}$ holds.

\medskip

\noindent
{\bf Subcase 1.}
If $\delta_i(0)=\varepsilon_i(0)$ then $Z_i$ is a straight block.
In this case, we only construct $Z_t(s,i,0)$.
Since $Z_i$ is straight, there are $y_i,z_i,\alpha,\beta\in\mathbb{Q}$ and $u\leq s$ such that, for $B_i(0)=[\alpha,\beta]$ and $B_i(1)=[y_i+z_il_u^-,y_i+z_ir_u^+]$ such that ${\rm Box}(Z_i)=B_i(\delta_i(0))\times B_i(1-\delta_i(0))$.
If $\delta_i(1)=0$, then set $y_i^{\star}=N_{0,s}(y_i,z_i;l_s^-,l_s^*)$ and $z_i^{\star}=N_{1,s}(y_i,z_i;l_s^-,l_s^*)$.
If $\delta_i(1)=1$, then set $y_i^{\star}=N_{0,s}(y_i,z_i;r_s^-,r_s^+)$ and $z_i^{\star}=N_{1,s}(y_i,z_i;r_s^*,r_s^+)$.
Then, we define $Z_t(s,i,0)$ as the following straight block:
\begin{align*}
B_i^{\star}(0)=B_i(0);\quad B_i^{\star}(1)=[y_i^{\star}+z_i^{\star}l_s^-,y_i^{\star}+z_i^{\star}r_s^+];\\
Z_t(s,i,0)=[\delta_i(0)]^s_t(B_i^\star(\delta_i(0))\times B_i^\star(1-\delta_i(0))).
\end{align*}
Here, ${\rm Box}(Z_t(s,i,0))$ is defined by $B_i^\star(\delta_i(0))\times B_i^\star(1-\delta_i(0))$.
\begin{figure}[t]\centering
 \begin{minipage}{0.48\hsize}
  \begin{center}
\unitlength 0.1in
\begin{picture}( 21.7000, 15.7000)(  4.3000,-20.0000)
%
\special{pn 8}%
\special{sh 0.600}%
\special{pa 1200 1100}%
\special{pa 1200 1200}%
\special{pa 2400 1200}%
\special{pa 2400 1100}%
\special{pa 2400 1100}%
\special{pa 1200 1100}%
\special{ip}%
%
\special{pn 8}%
\special{sh 0.600}%
\special{pa 1200 1400}%
\special{pa 1200 1500}%
\special{pa 2400 1500}%
\special{pa 2400 1400}%
\special{pa 2400 1400}%
\special{pa 1200 1400}%
\special{ip}%
%
\special{pn 8}%
\special{pa 1000 2000}%
\special{pa 1000 600}%
\special{fp}%
\special{sh 1}%
\special{pa 1000 600}%
\special{pa 980 668}%
\special{pa 1000 654}%
\special{pa 1020 668}%
\special{pa 1000 600}%
\special{fp}%
\special{pa 1000 2000}%
\special{pa 2600 2000}%
\special{fp}%
\special{sh 1}%
\special{pa 2600 2000}%
\special{pa 2534 1980}%
\special{pa 2548 2000}%
\special{pa 2534 2020}%
\special{pa 2600 2000}%
\special{fp}%
\put(15.3000,-6.0000){\makebox(0,0)[lb]{$\touch{[\rightarrow]}Z_i\touch{[\rightarrow]}$}}%
%
\special{pn 8}%
\special{pa 1200 2000}%
\special{pa 1200 1800}%
\special{dt 0.045}%
\special{pa 2400 2000}%
\special{pa 2400 1800}%
\special{dt 0.045}%
\special{pa 1200 1800}%
\special{pa 1000 1800}%
\special{dt 0.045}%
\special{pa 1200 800}%
\special{pa 1000 800}%
\special{dt 0.045}%
\put(23.3000,-21.4000){\makebox(0,0)[lb]{$\beta$}}%
\put(11.6000,-21.1000){\makebox(0,0)[lb]{$\alpha$}}%
\put(4.5000,-18.6000){\makebox(0,0)[lb]{$y_i+z_il_u^-$}}%
\put(4.3000,-8.7000){\makebox(0,0)[lb]{$y_i+z_ir_u^+$}}%
%
\special{pn 8}%
\special{sh 0.300}%
\special{pa 1200 1200}%
\special{pa 2400 1200}%
\special{pa 2400 1400}%
\special{pa 1200 1400}%
\special{pa 1200 1200}%
\special{ip}%
%
\special{pn 13}%
\special{pa 1200 800}%
\special{pa 2400 800}%
\special{pa 2400 1800}%
\special{pa 1200 1800}%
\special{pa 1200 800}%
\special{fp}%
%
\special{pn 8}%
\special{pa 1200 1100}%
\special{pa 1000 1100}%
\special{dt 0.045}%
\special{pa 1200 1500}%
\special{pa 1000 1500}%
\special{dt 0.045}%
\put(4.4000,-11.6000){\makebox(0,0)[lb]{$y_i+z_ir_s^+$}}%
\put(4.5000,-15.5000){\makebox(0,0)[lb]{$y_i+z_il_s^-$}}%
\end{picture}%
  \end{center}
 \vspace{-0.5em}
\caption{The block $Z_i$.}
  \label{fig:inj1}
 \end{minipage}
 \begin{minipage}{0.48\hsize}
  \begin{center}
\unitlength 0.1in
\begin{picture}( 16.0000, 10.6000)( 12.0000,-16.3000)
%
\special{pn 8}%
\special{sh 0.600}%
\special{pa 1200 800}%
\special{pa 1200 1000}%
\special{pa 2400 1000}%
\special{pa 2400 800}%
\special{pa 2400 800}%
\special{pa 1200 800}%
\special{ip}%
%
\special{pn 8}%
\special{sh 0.600}%
\special{pa 1200 1400}%
\special{pa 1200 1600}%
\special{pa 2400 1600}%
\special{pa 2400 1400}%
\special{pa 2400 1400}%
\special{pa 1200 1400}%
\special{ip}%
\put(16.0000,-18.0000){\makebox(0,0)[lb]{$\touch{[\rightarrow]}Z_i\touch{[\rightarrow]}$}}%
%
\special{pn 8}%
\special{sh 0.300}%
\special{pa 1200 1000}%
\special{pa 2400 1000}%
\special{pa 2400 1400}%
\special{pa 1200 1400}%
\special{pa 1200 1000}%
\special{ip}%
%
\special{pn 8}%
\special{sh 0.300}%
\special{pa 1200 830}%
\special{pa 2400 830}%
\special{pa 2400 880}%
\special{pa 1200 880}%
\special{pa 1200 830}%
\special{ip}%
\put(17.2000,-12.6000){\makebox(0,0)[lb]{$Z_i$}}%
%
\special{pn 8}%
\special{pa 2800 1000}%
\special{pa 2400 850}%
\special{fp}%
\special{sh 1}%
\special{pa 2400 850}%
\special{pa 2456 892}%
\special{pa 2450 870}%
\special{pa 2470 856}%
\special{pa 2400 850}%
\special{fp}%
\put(27.4000,-11.9000){\makebox(0,0)[lb]{$Z_s(s,i,0)$}}%
\put(13.9000,-7.4000){\makebox(0,0)[lb]{$\touch{[\leftarrow]}Z_t(s,i,0)\touch{[\leftarrow]}$}}%
\end{picture}%
  \end{center}
 \vspace{-0.5em}
\caption{The block $Z_t(s,i,0)$.}
  \label{fig:inj2}
 \end{minipage}
\end{figure}

\begin{sublem}
$Z_t(s,i,0)\subseteq Z_i$.
\end{sublem}

\begin{proof}\upshape
By our definition of $N_{0,s}$ and $N_{1,s}$, we have $B_i^{\star}(1)=[y_i+z_il_s^-,y_i+z_il_s^*]$ or $B_i^{\star}(1)=[y_i+z_ir_s^*,y_i+z_ir_s^+]$.
\end{proof}

\noindent
{\bf Subcase 2.}
If $\delta_i(0)\not=\delta_i(2)$ then $Z_i$ is a corner block.
We will construct three blocks; one corner block $Z_t(s,i,0)$, and two straight blocks $Z_t(s,i,1)$ and $Z_t(s,i,2)$.
We may assume that $Z_i$ is of the following form:
\begin{align*}
Z_i=[e]^u_s&([x_i+\zeta_il_u^-,x_i+\zeta_ir_u^+]\times[y_i+z_il_u^-,y_i+z_ir_u^+]),\\
\text{or }Z_i=[e]^u_s&([x_i+\zeta_il_u^-,x_i+\zeta_ir_u^+]\times[y_i+z_il_u^-,y_i+z_ir_u^+])\\
&\cap([x_i+\zeta_il_u^-,x_i+\zeta_ir_u^+]\times[y_i+z_il_u^-,y_i+z_ir_u^*])
\end{align*}
Set $z=0$ if the former case occurs; otherwise, set $z=1$.
Let $\{p_n:n<6\}$ be an enumeration of $\{l_u^-,l_s^-,l_s^*,r_s^*,r_s^+,r_u^+\}$ in increasing order, and let $p_6$ be $r_u^*$.
First we compute the value ${\tt rot}=2|\varepsilon_i(0)-|\delta_i(1)-\varepsilon_i(1)||+1$.
Note that ${\tt rot}\in\{1,3\}$, and, if $Z_i$ rotates clockwise then ${\tt rot}=1$; and if $Z_i$ rotates counterclockwise then ${\tt rot}=3$.
If $\touch{[\rightarrow]}Z_i$ or $Z_i\touch{[\rightarrow]}$, then put $D(0)=1$; otherwise put $D(0)=3$.
If $\touch{[\downarrow]}Z_i$ or $Z_i\touch{[\downarrow]}$, then put $D(1)=1$; otherwise put $D(1)=3$.
If $\touch{[\rightarrow]}Z_i$ or $Z_i\touch{[\leftarrow]}$, then put $E(0)=0$; otherwise put $E(0)=5-{\rm rot}$.
If $\touch{[\uparrow]}Z_i$ or $Z_i\touch{[\downarrow]}$, then put $E(1)=0$; otherwise put $E(1)=5-{\rm rot}$.
Then we now define $Z_t(s,i,j)$ for $j<3$ as follows:
\begin{align*}
{\rm Box}(s,i,0)&=[x_i+\zeta_ip_{D(0)},x_i+\zeta_ip_{D(0)+2}]\times[y_i+z_ip_{D(1)},y_i+z_ip_{D(1)+2}],\\
{\rm Box}(s,i,1)&=[x_i+\zeta_ip_{E(0)},x_i+\zeta_ip_{E(0)+{\tt rot}}]\times[y_i+z_ip_{D(1)},y_i+z_ip_{D(1)+2}],\\
{\rm Box}(s,i,2)&=[x_i+\zeta_ip_{D(0)},x_i+\zeta_ip_{D(0)+2}]\times[y_i+z_ip_{E(1)},y_i+z_ip_{E(1)+{\tt rot}+z}],\\
Z_t(s,i,0)&=[e]^s_t({\rm Box}(s,i,0)),\\
Z_t(s,i,1)&=[-]^s_t({\rm Box}(s,i,1)),\\
Z_t(s,i,2)&=[\;\mid\;]^s_t({\rm Box}(s,i,2)).
\end{align*}

Intuitively, $D(0)=1$ (resp.\ $D(0)=3$) indicates that $Z_t(s,i,0)$ passes the west (resp.\ the east) of $Z_i$; $D(1)=1$ (resp.\ $D(1)=3$) indicates that $Z_t(s,i,0)$ passes the south (resp.\ the north) of $Z_i$; $E(0)=0$ (resp.\ $E(0)=5-{\tt rot}$) indicates that $Z_t(s,i,1)$ passes the west (resp.\ the east) border of the bounding box of $Z_i$; and $E(1)=0$ (resp.\ $E(1)=5-{\tt rot}$) indicates that $Z_t(s,i,2)$ passes the south (resp.\ the north) border of the bounding box of $Z_i$.
Note that the corner block $Z_t(s,i,0)$ leaves $Z_i$ on his right, and $Z_t(s,i,0)$ has the reverse direction to $Z_i$.

\begin{figure}[t]\centering
 \begin{minipage}{0.48\hsize}
  \begin{center}
\unitlength 0.1in
\begin{picture}( 19.1500, 17.4500)(  4.0000,-20.5000)
%
\special{pn 13}%
\special{pa 676 596}%
\special{pa 2076 596}%
\special{pa 2076 1996}%
\special{pa 676 1996}%
\special{pa 676 596}%
\special{fp}%
%
\special{pn 13}%
\special{sh 0.300}%
\special{pa 1176 596}%
\special{pa 1576 596}%
\special{pa 1576 1396}%
\special{pa 1176 1396}%
\special{pa 1176 596}%
\special{ip}%
%
\special{pn 13}%
\special{sh 0.300}%
\special{pa 1576 1196}%
\special{pa 2076 1196}%
\special{pa 2076 1396}%
\special{pa 1576 1396}%
\special{pa 1576 1196}%
\special{ip}%
%
\special{pn 13}%
\special{sh 0.600}%
\special{pa 976 596}%
\special{pa 1176 596}%
\special{pa 1176 1496}%
\special{pa 976 1496}%
\special{pa 976 596}%
\special{ip}%
%
\special{pn 13}%
\special{sh 0.600}%
\special{pa 1576 596}%
\special{pa 1776 596}%
\special{pa 1776 1196}%
\special{pa 1576 1196}%
\special{pa 1576 596}%
\special{ip}%
%
\special{pn 13}%
\special{sh 0.600}%
\special{pa 1776 1096}%
\special{pa 2076 1096}%
\special{pa 2076 1196}%
\special{pa 1776 1196}%
\special{pa 1776 1096}%
\special{ip}%
%
\special{pn 13}%
\special{sh 0.600}%
\special{pa 1176 1396}%
\special{pa 2076 1396}%
\special{pa 2076 1496}%
\special{pa 1176 1496}%
\special{pa 1176 1396}%
\special{ip}%
\put(13.1500,-12.5000){\makebox(0,0)[lb]{$Z_i$}}%
%
\special{pn 8}%
\special{pa 1076 1696}%
\special{pa 776 1696}%
\special{da 0.070}%
%
\special{pn 8}%
\special{pa 776 1696}%
\special{pa 776 1296}%
\special{da 0.070}%
\special{sh 1}%
\special{pa 776 1296}%
\special{pa 756 1362}%
\special{pa 776 1348}%
\special{pa 796 1362}%
\special{pa 776 1296}%
\special{fp}%
%
\special{pn 8}%
\special{sh 0.300}%
\special{pa 1676 596}%
\special{pa 1726 596}%
\special{pa 1726 1146}%
\special{pa 1676 1146}%
\special{pa 1676 596}%
\special{ip}%
%
\special{pn 8}%
\special{sh 0.300}%
\special{pa 1726 1146}%
\special{pa 2076 1146}%
\special{pa 2076 1120}%
\special{pa 1726 1120}%
\special{pa 1726 1146}%
\special{ip}%
\put(12.0500,-4.7500){\makebox(0,0)[lb]{$Z_s(s,i,2)$}}%
%
\special{pn 8}%
\special{pa 1776 1096}%
\special{pa 1626 1096}%
\special{pa 1626 596}%
\special{pa 1776 596}%
\special{pa 1776 1096}%
\special{fp}%
%
\special{pn 8}%
\special{pa 1626 1096}%
\special{pa 1776 1096}%
\special{pa 1776 1170}%
\special{pa 1626 1170}%
\special{pa 1626 1096}%
\special{fp}%
%
\special{pn 8}%
\special{pa 1776 1096}%
\special{pa 2076 1096}%
\special{pa 2076 1170}%
\special{pa 1776 1170}%
\special{pa 1776 1096}%
\special{fp}%
%
\special{pn 8}%
\special{pa 2276 1196}%
\special{pa 2096 1126}%
\special{fp}%
\special{sh 1}%
\special{pa 2096 1126}%
\special{pa 2150 1168}%
\special{pa 2146 1144}%
\special{pa 2164 1132}%
\special{pa 2096 1126}%
\special{fp}%
\put(22.5500,-13.3500){\makebox(0,0)[lb]{$Z_s(s,i,1)$}}%
%
\special{pn 8}%
\special{pa 1876 996}%
\special{pa 1796 1086}%
\special{da 0.070}%
\special{sh 1}%
\special{pa 1796 1086}%
\special{pa 1854 1048}%
\special{pa 1830 1046}%
\special{pa 1824 1022}%
\special{pa 1796 1086}%
\special{fp}%
\put(23.1500,-8.3500){\makebox(0,0)[lb]{$Z_s(s,i,0)$}}%
%
\special{pn 8}%
\special{pa 2276 796}%
\special{pa 1876 996}%
\special{fp}%
%
\special{pn 8}%
\special{pa 1876 596}%
\special{pa 1876 996}%
\special{dt 0.045}%
%
\special{pn 8}%
\special{pa 1876 996}%
\special{pa 2076 996}%
\special{dt 0.045}%
\special{sh 1}%
\special{pa 2076 996}%
\special{pa 2008 976}%
\special{pa 2022 996}%
\special{pa 2008 1016}%
\special{pa 2076 996}%
\special{fp}%
\put(4.0000,-22.2000){\makebox(0,0)[lb]{$Z_s(s,i,2)\touch{\downarrow}Z_s(s,i,0)\touch{\rightarrow}Z_s(s,i,1)$}}%
%
\special{pn 8}%
\special{pa 1476 496}%
\special{pa 1700 580}%
\special{fp}%
\special{sh 1}%
\special{pa 1700 580}%
\special{pa 1646 538}%
\special{pa 1650 562}%
\special{pa 1632 576}%
\special{pa 1700 580}%
\special{fp}%
\end{picture}%
  \end{center}
 \vspace{-0.5em}
\caption{${\rm rot}=1$.}
  \label{fig:inj4}
 \end{minipage}
 \begin{minipage}{0.48\hsize}
  \begin{center}
\unitlength 0.1in
\begin{picture}( 17.0000, 17.0500)(  4.0000,-16.5000)
%
\special{pn 13}%
\special{pa 600 216}%
\special{pa 2000 216}%
\special{pa 2000 1616}%
\special{pa 600 1616}%
\special{pa 600 216}%
\special{fp}%
%
\special{pn 13}%
\special{sh 0.300}%
\special{pa 1100 216}%
\special{pa 1500 216}%
\special{pa 1500 1016}%
\special{pa 1100 1016}%
\special{pa 1100 216}%
\special{ip}%
%
\special{pn 13}%
\special{sh 0.300}%
\special{pa 1500 816}%
\special{pa 2000 816}%
\special{pa 2000 1016}%
\special{pa 1500 1016}%
\special{pa 1500 816}%
\special{ip}%
%
\special{pn 13}%
\special{sh 0.600}%
\special{pa 900 216}%
\special{pa 1100 216}%
\special{pa 1100 1116}%
\special{pa 900 1116}%
\special{pa 900 216}%
\special{ip}%
%
\special{pn 13}%
\special{sh 0.600}%
\special{pa 1500 216}%
\special{pa 1700 216}%
\special{pa 1700 816}%
\special{pa 1500 816}%
\special{pa 1500 216}%
\special{ip}%
%
\special{pn 13}%
\special{sh 0.600}%
\special{pa 1700 716}%
\special{pa 2000 716}%
\special{pa 2000 816}%
\special{pa 1700 816}%
\special{pa 1700 716}%
\special{ip}%
%
\special{pn 13}%
\special{sh 0.600}%
\special{pa 1100 1016}%
\special{pa 2000 1016}%
\special{pa 2000 1116}%
\special{pa 1100 1116}%
\special{pa 1100 1016}%
\special{ip}%
\put(12.4000,-8.7000){\makebox(0,0)[lb]{$Z_i$}}%
%
\special{pn 8}%
\special{pa 1800 216}%
\special{pa 1800 616}%
\special{da 0.070}%
%
\special{pn 8}%
\special{pa 1800 616}%
\special{pa 2000 616}%
\special{da 0.070}%
\special{sh 1}%
\special{pa 2000 616}%
\special{pa 1934 596}%
\special{pa 1948 616}%
\special{pa 1934 636}%
\special{pa 2000 616}%
\special{fp}%
%
\special{pn 13}%
\special{sh 0.300}%
\special{pa 950 216}%
\special{pa 1000 216}%
\special{pa 1000 1090}%
\special{pa 950 1090}%
\special{pa 950 216}%
\special{ip}%
%
\special{pn 13}%
\special{sh 0.300}%
\special{pa 1000 1090}%
\special{pa 2000 1090}%
\special{pa 2000 1066}%
\special{pa 1000 1066}%
\special{pa 1000 1090}%
\special{ip}%
%
\special{pn 8}%
\special{pa 900 216}%
\special{pa 1050 216}%
\special{pa 1050 1040}%
\special{pa 900 1040}%
\special{pa 900 216}%
\special{fp}%
%
\special{pn 8}%
\special{pa 900 1116}%
\special{pa 1050 1116}%
\special{pa 1050 1040}%
\special{pa 900 1040}%
\special{pa 900 1116}%
\special{fp}%
%
\special{pn 8}%
\special{pa 1050 1116}%
\special{pa 2000 1116}%
\special{pa 2000 1040}%
\special{pa 1050 1040}%
\special{pa 1050 1116}%
\special{fp}%
%
\special{pn 8}%
\special{pa 1000 1216}%
\special{pa 800 1216}%
\special{dt 0.045}%
%
\special{pn 8}%
\special{pa 800 1216}%
\special{pa 800 916}%
\special{dt 0.045}%
\special{sh 1}%
\special{pa 800 916}%
\special{pa 780 982}%
\special{pa 800 968}%
\special{pa 820 982}%
\special{pa 800 916}%
\special{fp}%
%
\special{pn 8}%
\special{pa 2100 1216}%
\special{pa 2006 1080}%
\special{fp}%
\special{sh 1}%
\special{pa 2006 1080}%
\special{pa 2028 1146}%
\special{pa 2036 1124}%
\special{pa 2060 1124}%
\special{pa 2006 1080}%
\special{fp}%
\put(20.7000,-13.2500){\makebox(0,0)[lb]{$Z_s(s,i,1)$}}%
%
\special{pn 8}%
\special{pa 800 116}%
\special{pa 966 200}%
\special{fp}%
\special{sh 1}%
\special{pa 966 200}%
\special{pa 916 152}%
\special{pa 918 176}%
\special{pa 898 188}%
\special{pa 966 200}%
\special{fp}%
\put(6.0000,-1.1500){\makebox(0,0)[lb]{$Z_s(s,i,2)$}}%
%
\special{pn 8}%
\special{pa 1100 1216}%
\special{pa 1000 1116}%
\special{fp}%
\special{sh 1}%
\special{pa 1000 1116}%
\special{pa 1034 1176}%
\special{pa 1038 1154}%
\special{pa 1062 1148}%
\special{pa 1000 1116}%
\special{fp}%
\put(11.2500,-12.5500){\makebox(0,0)[lb]{$Z_s(s,i,0)$}}%
\put(4.0000,-18.2000){\makebox(0,0)[lb]{$Z_s(s,i,1)\touch{\leftarrow}Z_s(s,i,0)\touch{\uparrow}Z_s(s,i,2)$}}%
\end{picture}%
  \end{center}
 \vspace{-0.5em}
\caption{${\rm rot}=3$.}
  \label{fig:inj5}
 \end{minipage}
\end{figure}

\begin{sublem}
$Z_t(s,i,2-\delta_i(0))\touch{\varepsilon^{\circ}}Z_t(s,i,0)\touch{\delta^{\circ}}Z_t(s,t,1+\delta_i(0))$.
\end{sublem}

\begin{sublem}
$Z_t(s,i,j)\subseteq Z_i$.
\end{sublem}

For each $i<k_{s+1}$, we have already constructed $\mathcal{Z}_t(s+1;i)=\{Z_t(s,i,j):j<3\}$.
All of these blocks constructed at the current stage are included in $Z_s^{\rm end}\cup\bigcup\bigcup_{p<u\leq s}\mathcal{Z}_s(u)$.
Let $Z^{0}[i]$ (resp.\ $Z^{1}[i]$) be the $\prec$-least (resp. the $\prec$-greatest) element of $\{\lambda t.Z_t(s,i,j):j<3\}$.
It is not hard to see that our construction ensures the following condition.

\begin{sublem}
$Z_t^1[i]\touch{}Z_t^0[i+1]$.
\end{sublem}

Thus, $\bigcup_{i<k_{s+1}}\mathcal{Z}_t(s+1;i)$ is computably homeomorphic to $P_t\times[0,1]$, uniformly in $t\geq s+1$.
Therefore, we can connect blocks $Z_s(s,i)$ for $i<k_{s+1}$, and we succeed to return back on the current approximation of the $\prec$-greatest $p$-block $Z_s(p)=Z_{p,s}^{\rm st}\in\mathcal{Z}_s(p)$.
Then we construct blocks $Z_t(s,k)$ for $2\leq k\leq 6$ on the block $Z_s(p)$.
The construction is essentially similar as the non-injuring case.
By induction hypothesis (IH3), we note that $Z_s(p)$ must be of the following form for some $y_p,z_p\in\mathbb{Q}$:
\[Z_s(p)=[-]^p_s([\gamma_p^{\rm min},\gamma_p^{\max}]\times[y_p+z_pl_p^-,y_p+z_pr_p^+]).\]
On $Z_s(p)$, we define {\em a straight block from $\gamma_p^{\min}$ to $\gamma_{s+1}^{\max}$} as follows:
\[Z_t(s,2)=[-]^p_s([\gamma_p^{\rm min},\gamma_{s+1}^{\max}]\times[y_p+z_pr_s^*,y_p+z_pr_s^+]).\]
Here, by our assumption, $\gamma^{\max}_{s+1}<\gamma^{\max}_p$ holds since $\gamma^{\max}_{s+1}\leq\gamma^{\max}_p$.
The blocks $Z_t(s,k)$ for $3\leq k\leq 6$ are defined as in the same method as non-injuring case.
The active block at stage $s+1$ is $Z_{s+1}(s,5)$, and the end box at stage $s+1$ is $Z_{s+1}(s,6)$.
{\em $(s+1)$-blocks at stage $t$} are $Z_t(s,i)$ for $i<6$, and $Z_t(s,i,j)$ for $i<k_{s+1}$ and $j<3$ if it is constructed.
$\mathcal{Z}_t(s+1)$ denotes {\em the collection of $(s+1)$-blocks at stage $t$}.

\begin{sublem}
$Z_{s+1}^{\rm end}\cup\bigcup\mathcal{Z}_{s+1}(s+1)\subseteq Z_s^{\rm end}\cup\bigcup\bigcup_{p\leq u\leq s}\mathcal{Z}_s(u)$.
\end{sublem}

Thus we again have the following:
\[Q_{s+1}=Z_{s+1}^{\rm end}\cup\bigcup\bigcup_{u\leq s+1}\mathcal{Z}_{s+1}(u)\subseteq Z_s^{\rm st}\cup Z_s^{\rm end}\cup\bigcup\bigcup_{u\leq s}\mathcal{Z}_{s}(u)\subseteq Q_s.\]

\begin{sublem}\label{sublem:15}
Assume that we have a computable function $f_s:\mathbb{R}^2\to\mathbb{R}^2$ such that $f_s\res\bigcup\bigcup_{u\leq s}\mathcal{Z}_{t}(u)$ is a computable homeomorphism between $\bigcup\bigcup_{u\leq s}\mathcal{Z}_{t}(u)$ and $P_t\times[1/(s+2),1]$ for any $t\geq s$.
Then we can effectively find a computable function $f_{s+1}:\mathbb{R}^2\to\mathbb{R}^2$ extending $f_s\res\bigcup\bigcup_{u\leq s}\mathcal{Z}_{s+1}(u)$ such that $f_{s+1}\res\bigcup\bigcup_{u\leq s+1}\mathcal{Z}_{t}(u)$ is a computable homeomorphism between $\bigcup\bigcup_{u\leq s+1}\mathcal{Z}_{t}(u)$ and $P_t\times[1/(s+3),1]$ for any $t\geq s+1$.
\end{sublem}

\begin{figure}[t]\centering
  \begin{center}
\unitlength 0.1in
\begin{picture}( 32.0000, 21.7500)(  2.0000,-24.0500)
%
\special{pn 8}%
\special{sh 0.600}%
\special{pa 200 1600}%
\special{pa 3400 1600}%
\special{pa 3400 2200}%
\special{pa 200 2200}%
\special{pa 200 1600}%
\special{ip}%
%
\special{pn 8}%
\special{sh 0.300}%
\special{pa 400 1800}%
\special{pa 3200 1800}%
\special{pa 3200 2000}%
\special{pa 400 2000}%
\special{pa 400 1800}%
\special{ip}%
%
\special{pn 8}%
\special{sh 0.300}%
\special{pa 3200 2200}%
\special{pa 3000 2200}%
\special{pa 3000 2000}%
\special{pa 3200 2000}%
\special{pa 3200 2200}%
\special{ip}%
%
\special{pn 8}%
\special{sh 0.300}%
\special{pa 400 1800}%
\special{pa 600 1800}%
\special{pa 600 1600}%
\special{pa 400 1600}%
\special{pa 400 1800}%
\special{ip}%
%
\special{pn 8}%
\special{pa 490 1596}%
\special{pa 490 1396}%
\special{fp}%
\special{pa 490 1396}%
\special{pa 1890 1396}%
\special{fp}%
\special{pa 1890 1396}%
\special{pa 1890 1196}%
\special{fp}%
\special{pa 1890 1196}%
\special{pa 690 1196}%
\special{fp}%
\special{pa 690 1196}%
\special{pa 690 996}%
\special{fp}%
\special{pa 690 996}%
\special{pa 1290 996}%
\special{fp}%
\special{pa 1290 996}%
\special{pa 1290 896}%
\special{fp}%
\special{pa 1290 896}%
\special{pa 890 896}%
\special{fp}%
\special{pa 890 896}%
\special{pa 890 796}%
\special{fp}%
\special{pa 890 796}%
\special{pa 1190 796}%
\special{fp}%
\special{pa 1190 796}%
\special{pa 1190 696}%
\special{fp}%
\special{pa 1190 696}%
\special{pa 990 696}%
\special{fp}%
%
\special{pn 8}%
\special{pa 990 696}%
\special{pa 990 670}%
\special{fp}%
\special{pa 990 670}%
\special{pa 1216 670}%
\special{fp}%
\special{pa 1216 670}%
\special{pa 1216 820}%
\special{fp}%
\special{pa 1216 820}%
\special{pa 916 820}%
\special{fp}%
\special{pa 916 820}%
\special{pa 916 870}%
\special{fp}%
\special{pa 916 870}%
\special{pa 1340 870}%
\special{fp}%
\special{pa 1340 870}%
\special{pa 1340 1046}%
\special{fp}%
\special{pa 1340 1046}%
\special{pa 740 1046}%
\special{fp}%
\special{pa 740 1046}%
\special{pa 740 1146}%
\special{fp}%
\special{pa 740 1146}%
\special{pa 1790 1146}%
\special{fp}%
\special{pa 1790 1146}%
\special{pa 1790 1096}%
\special{fp}%
\special{pa 1790 1096}%
\special{pa 1490 1096}%
\special{fp}%
\special{pa 1490 1096}%
\special{pa 1490 1070}%
\special{fp}%
\special{pa 1490 1070}%
\special{pa 1690 1070}%
\special{fp}%
\special{pa 1690 1070}%
\special{pa 1690 1046}%
\special{fp}%
\special{pa 1690 1046}%
\special{pa 1590 1046}%
\special{fp}%
%
\special{pn 8}%
\special{pa 2200 2400}%
\special{pa 3000 2400}%
\special{fp}%
\special{sh 1}%
\special{pa 3000 2400}%
\special{pa 2934 2380}%
\special{pa 2948 2400}%
\special{pa 2934 2420}%
\special{pa 3000 2400}%
\special{fp}%
\special{pa 2200 2400}%
\special{pa 600 2400}%
\special{fp}%
\special{sh 1}%
\special{pa 600 2400}%
\special{pa 668 2420}%
\special{pa 654 2400}%
\special{pa 668 2380}%
\special{pa 600 2400}%
\special{fp}%
%
\special{pn 8}%
\special{pa 2600 600}%
\special{pa 2400 600}%
\special{fp}%
\special{sh 1}%
\special{pa 2400 600}%
\special{pa 2468 620}%
\special{pa 2454 600}%
\special{pa 2468 580}%
\special{pa 2400 600}%
\special{fp}%
\special{pa 2600 600}%
\special{pa 2800 600}%
\special{fp}%
\special{sh 1}%
\special{pa 2800 600}%
\special{pa 2734 580}%
\special{pa 2748 600}%
\special{pa 2734 620}%
\special{pa 2800 600}%
\special{fp}%
\put(4.0000,-4.0000){\makebox(0,0)[lb]{Overview of the upside of the frontier $p$-block.}}%
%
\special{pn 8}%
\special{pa 1590 1046}%
\special{pa 1540 1046}%
\special{fp}%
\special{pa 1540 1046}%
\special{pa 1540 1032}%
\special{fp}%
\special{pa 1540 1032}%
\special{pa 1702 1032}%
\special{fp}%
\special{pa 1702 1032}%
\special{pa 1702 1078}%
\special{fp}%
\special{pa 1702 1078}%
\special{pa 1500 1078}%
\special{fp}%
\special{pa 1500 1078}%
\special{pa 1500 1088}%
\special{fp}%
\special{pa 1500 1088}%
\special{pa 1800 1088}%
\special{fp}%
\special{pa 1800 1088}%
\special{pa 1800 1156}%
\special{fp}%
\special{pa 1800 1156}%
\special{pa 728 1156}%
\special{fp}%
\special{pa 728 1156}%
\special{pa 728 1032}%
\special{fp}%
\special{pa 728 1032}%
\special{pa 1328 1032}%
\special{fp}%
\special{pa 1328 1032}%
\special{pa 1328 880}%
\special{fp}%
\special{pa 1328 880}%
\special{pa 908 880}%
\special{fp}%
\special{pa 908 880}%
\special{pa 908 812}%
\special{fp}%
\special{pa 908 812}%
\special{pa 1208 812}%
\special{fp}%
\special{pa 1208 812}%
\special{pa 1208 678}%
\special{fp}%
\special{pa 1208 678}%
\special{pa 998 678}%
\special{fp}%
\special{pa 998 678}%
\special{pa 998 688}%
\special{fp}%
\special{pa 998 688}%
\special{pa 1198 688}%
\special{fp}%
\special{pa 1198 688}%
\special{pa 1198 802}%
\special{fp}%
\special{pa 1198 802}%
\special{pa 898 802}%
\special{fp}%
\special{pa 898 802}%
\special{pa 898 888}%
\special{fp}%
\special{pa 898 888}%
\special{pa 1298 888}%
\special{fp}%
\special{pa 1298 888}%
\special{pa 1298 1008}%
\special{fp}%
\special{pa 1298 1008}%
\special{pa 702 1008}%
\special{fp}%
\special{pa 702 1008}%
\special{pa 702 1182}%
\special{fp}%
\special{pa 702 1182}%
\special{pa 1916 1182}%
\special{fp}%
\special{pa 1916 1182}%
\special{pa 1916 1420}%
\special{fp}%
%
\special{pn 8}%
\special{pa 1916 1420}%
\special{pa 740 1420}%
\special{fp}%
\special{pa 740 1420}%
\special{pa 740 1600}%
\special{fp}%
%
\special{pn 8}%
\special{sh 0.300}%
\special{pa 710 1600}%
\special{pa 770 1600}%
\special{pa 770 1686}%
\special{pa 710 1686}%
\special{pa 710 1600}%
\special{ip}%
%
\special{pn 8}%
\special{sh 0.300}%
\special{pa 770 1686}%
\special{pa 2800 1686}%
\special{pa 2800 1656}%
\special{pa 770 1656}%
\special{pa 770 1686}%
\special{ip}%
%
\special{pn 8}%
\special{sh 0.300}%
\special{pa 2800 1686}%
\special{pa 2830 1686}%
\special{pa 2830 1616}%
\special{pa 2800 1616}%
\special{pa 2800 1686}%
\special{ip}%
%
\special{pn 8}%
\special{sh 0.300}%
\special{pa 2800 1616}%
\special{pa 2400 1616}%
\special{pa 2400 1630}%
\special{pa 2800 1630}%
\special{pa 2800 1616}%
\special{ip}%
%
\special{pn 8}%
\special{pa 2400 600}%
\special{pa 2400 1600}%
\special{dt 0.045}%
\special{pa 2800 600}%
\special{pa 2800 1600}%
\special{dt 0.045}%
\special{pa 600 2400}%
\special{pa 600 2200}%
\special{dt 0.045}%
\special{pa 3000 2400}%
\special{pa 3000 2200}%
\special{dt 0.045}%
%
\special{pn 8}%
\special{pa 1540 1020}%
\special{pa 1540 600}%
\special{dt 0.045}%
\special{pa 1680 1020}%
\special{pa 1680 600}%
\special{dt 0.045}%
\put(22.2000,-5.9000){\makebox(0,0)[lb]{$\gamma_{s+1}^{\min}$}}%
\put(27.4000,-5.9000){\makebox(0,0)[lb]{$\gamma_{s+1}^{\max}$}}%
\put(4.7000,-25.7000){\makebox(0,0)[lb]{$\gamma_p^{\min}$}}%
\put(29.3000,-25.7000){\makebox(0,0)[lb]{$\gamma_p^{\max}$}}%
\put(14.2000,-6.0000){\makebox(0,0)[lb]{$\gamma_s^{\min}$}}%
\put(17.0000,-6.0000){\makebox(0,0)[lb]{$\gamma_s^{\max}$}}%
%
\special{pn 8}%
\special{pa 1800 900}%
\special{pa 1600 1030}%
\special{fp}%
\special{sh 1}%
\special{pa 1600 1030}%
\special{pa 1668 1010}%
\special{pa 1646 1002}%
\special{pa 1646 978}%
\special{pa 1600 1030}%
\special{fp}%
%
\special{pn 8}%
\special{pa 1800 900}%
\special{pa 2940 900}%
\special{fp}%
\put(30.0000,-10.0000){\makebox(0,0)[lb]{The active block $Z_{s-1}^{\rm st}$.}}%
\end{picture}%
  \end{center}
 \vspace{-0.5em}
\caption{Outline of our construction of the injured case.}
  \label{fig:mthm10}
\end{figure}
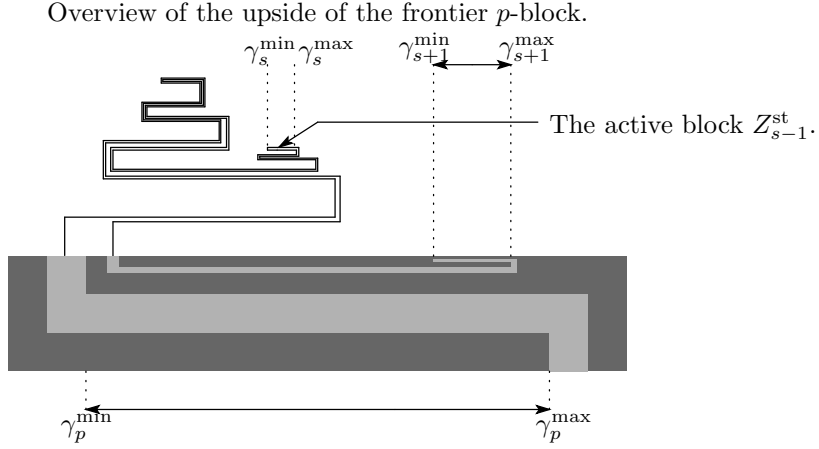

Finally we put $Q=\bigcap_{s\in\mathbb{N}}Q_s$ and $\mathcal{Z}^*=\bigcup_{u\in\mathbb{N}}\mathcal{Z}(u)$.
The construction is completed.

\medskip

\noindent
{\bf Verification.}
Now we start to verify our construction.

\begin{lemma}\label{lem:verify:6}
$Q$ is $\Pi^0_1$.
\end{lemma}

\begin{sublem}\label{sublem:16}
$\bigcap_{t\in\mathbb{N}}\bigcup_{Z\in\mathcal{Z}^*}Z_t=\bigcup_{Z\in\mathcal{Z}^*}\bigcap_{t\in\mathbb{N}}Z_t$.
\end{sublem}

\begin{proof}\upshape
The intersection $Z_s(p)\cap Z^i_s$ for $i<2$ is included in some line segment $L_i\in\{[0,1]\times\{b\},\{b\}\times[0,1]:b\in\mathbb{R}\}$, and $Z_s(p)\cap L_i=Z_s(p)\cap Z^i_s$ holds.
\end{proof}

\begin{sublem}\label{sublem:18}
$\bigcup_{Z\in\mathcal{Z}(u)}\bigcap_{t\in\mathbb{N}}Z_t$ is computably homeomorphic to $[0,1]\times P$, for each $u\in\nn$.
\end{sublem}

\begin{proof}\upshape
By the induction hypothesis (IH2).
\end{proof}

\begin{sublem}\label{sublem:17}
$\bigcup_{Z\in\mathcal{Z}^*}\bigcap_{t\in\mathbb{N}}Z_t$ is homeomorphic to $(0,1]\times P$.
\end{sublem}

\begin{proof}\upshape
By Sublemma \ref{sublem:6} and \ref{sublem:15}.
\end{proof}

\begin{lemma}\label{lem:verify:7}
$Q$ is homeomorphic to a Cantor fan.
\end{lemma}

\begin{proof}\upshape
By Sublemma \ref{sublem:16}, there exists a real $y_0\in\mathbb{R}$ such that the following holds:
\[Q=\left(\bigcup_{Z\in\mathcal{Z}^*}\bigcap_{t\in\mathbb{N}}Z_t\right)\cup\{\lrangle{\gamma,y_0}\}.\]
Therefore, by Sublemma \ref{sublem:17}, $Q$ is homeomorphic to the one-point compactification of $(0,1]\times P$.
\end{proof}

\begin{lemma}\label{lem:verify:8}
$Q$ contains no computable point.
\end{lemma}

\begin{proof}\upshape
By Sublemma \ref{sublem:18}, $\bigcup_{Z\in\mathcal{Z}^*}\bigcap_{t\in\mathbb{N}}Z_t$ contains no computable point.
\end{proof}

By Lemmata \ref{lem:verify:6}, \ref{lem:verify:7}, and \ref{lem:verify:8}, $Q$ is the desired dendroid.
\end{proof}

\begin{remark}
Since dendroids are compact and simply connected, Theorem \ref{thm:special_roid} is the solution to the question of Le Roux and Ziegler \cite{RZ}.
Indeed, the dendroid constructed in the proof of Theorem \ref{thm:special_roid} is contractible.
\end{remark}

\begin{cor}
Not every nonempty contractible $\Pi^0_1$ subset of $[0,1]^2$ contains a computable point.
\end{cor}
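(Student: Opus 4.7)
The plan is to read this corollary as an immediate consequence of Theorem \ref{thm:special_roid}: the $\Pi^0_1$ planar dendroid $Q$ produced there is (up to an innocuous rescaling) the witness, so all that remains is to verify the two extra properties in the statement, namely the containment in $[0,1]^2$ and the contractibility.

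For the containment, I would appeal to the construction of $Q$ as the nested intersection $\bigcap_s Q_s$ with $Q_s \subseteq Q_0 = [\gamma_0^{\min} - 1/3, \gamma_0^{\max}] \times [l_0^-, r_0^+]$; the normalizing assumption $1/3 \leq \gamma_s^{\min} \leq \gamma_s^{\max} \leq 2/3$ that the construction imposes, together with $P_0 \subseteq [0,1]$ from the fat-approximation setup, forces this bounding rectangle already to lie in $[0,1]^2$. If one prefers not to lean on those specific bounds, one can instead compose coordinates with a computable affine map $\lrangle{x,y} \mapsto \lrangle{ax+b, cy+d}$ having rational coefficients, which preserves the $\Pi^0_1$-presentation, the homeomorphism type, and the non-computability of every point of $Q$.

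For contractibility, I would invoke Lemma \ref{lem:verify:7}, which identifies $Q$ up to homeomorphism with the Cantor fan $\mathrm{Cone}(C) = (C \times [0,1])/(C \times \{0\})$. The Cantor fan is manifestly contractible via the straight-line null-homotopy $H_t([x,s]) = [x,(1-t)s]$, which continuously retracts the whole fan onto its apex. Since contractibility is a topological invariant, $Q$ is contractible. Combining this with Theorem \ref{thm:special_roid} produces a nonempty contractible $\Pi^0_1$ subset of $[0,1]^2$ without computable points, as required.

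The hard part is hidden entirely inside Theorem \ref{thm:special_roid}: once that theorem is available, no substantive obstacle remains. The only two observations needed beyond it are the elementary fact that Cantor fans are contractible and the bookkeeping fact that the construction already (or after a rational affine change of coordinates) lands in the unit square.
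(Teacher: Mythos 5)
Your argument is correct and matches the paper's intent: the paper simply notes (in the remark preceding the corollary) that the dendroid $Q$ from Theorem \ref{thm:special_roid} is contractible, and that observation rests on exactly the facts you cite — Lemma \ref{lem:verify:7} identifies $Q$ with a Cantor fan, whose contractibility via the cone null-homotopy is already recorded in Example \ref{exa:dend:dend}, while the bounding-box computation $Q\subseteq Q_0\subseteq[0,2/3]\times[0,1]$ handles the containment in $[0,1]^2$. Your alternative via a rational affine change of coordinates is a harmless fallback but unnecessary.
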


\begin{question}
Does every locally connected planar $\Pi^0_1$ set contain a computable point?
\end{question}

\section{Immediate Consequences}

\subsection{Effective Hausdorff Dimension}

For basic definition and properties of the the effective Hausdorff dimension of a point of Euclidean plane, see Lutz-Weihrauch \cite{LW}.
For any $I\subseteq[0,2]$, let ${\rm DIM}^I$ denote the set of all points in $\mathbb{R}^2$ whose effective Hausdorff dimensions lie in $I$.
Lutz-Weihrauch \cite{LW} showed that ${\rm DIM}^{[1,2]}$ is path-connected, but ${\rm DIM}^{(1,2]}$ is totally disconnected.
In particular, ${\rm DIM}^{(1,2]}$ has no nondegenerate connected subset.
It is easy to see that ${\rm DIM}^{(0,2]}$ has no nonempty $\Pi^0_1$ simple curve, since every $\Pi^0_1$ simple curve contains a computable point, and the effective Hausdorff dimension of each computable point is zero.

\begin{theorem}
${\rm DIM}^{[1,2]}$ has a nondegenerate contractible $\Pi^0_1$ subset.
\end{theorem}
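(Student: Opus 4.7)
The plan is to adapt the construction in the proof of Theorem~\ref{thm:special_roid}, choosing the ingredients so that every point of the resulting contractible $\Pi^{0}_{1}$ planar Cantor fan lies in~${\rm DIM}^{[1,2]}$.

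First, I would replace the $\Pi^{0}_{1}$ class $P^{*} \subseteq 2^{\mathbb{N}}$ used there by a $\Pi^{0}_{1}$ class consisting entirely of Martin-L\"of random reals, for example the complement of a level of the universal Martin-L\"of test of measure less than~$1$. This class is nonempty and contains no computable element, so the hypothesis of Theorem~\ref{thm:special_roid} is preserved; moreover every element has effective Hausdorff dimension~$1$ in~$2^{\mathbb{N}}$. Its middle-thirds image $\pi_{*}(P^{*})$ and the symmetrization $P = \{x/3 : x \in \pi_{*}(P^{*})\} \cup \{1-x/3 : x \in \pi_{*}(P^{*})\}$ from that proof still consist of reals of effective Hausdorff dimension~$1$ in~$\mathbb{R}$, because $\pi_{*}$, scaling by~$1/3$, and reflection are computable bi-Lipschitz maps, which preserve effective Hausdorff dimension.

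Next, I would verify that every point in the non-apex part of the resulting fan $Q$ lies in~${\rm DIM}^{[1,2]}$. By Sublemma~\ref{sublem:16} and Lemma~\ref{lem:verify:7}, such a point lies in some limit block $\bigcap_{t} Z_{t}$ with $Z \in \mathcal{Z}^{*}$. Inspection of the operators $[v]^{s}_{t}$, $[-]^{s}_{t}$, and $[\;\mid\;]^{s}_{t}$ shows that each limit block is a finite union of ``arms'' of the form $[a,a+q] \times \Theta(P;r,b)$ or $\Theta(P;q,a) \times [b,b+r]$, with $a,b,q,r$ computable rationals. Any point of such an arm has at least one coordinate of the form $\alpha + \sigma p$ for computable rationals $\alpha, \sigma$ with $\sigma \neq 0$ and some $p \in P$. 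Since $p$ has effective Hausdorff dimension~$1$ and $p \mapsto \alpha + \sigma p$ is a computable affine bijection onto its image, that coordinate has effective Hausdorff dimension~$1$, and hence the full point has effective Hausdorff dimension at least~$1$ in~$\mathbb{R}^{2}$.

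The hard part will be the ramification point $p^{*} = (\gamma, y_{0})$. Since $\gamma = \rho(B)$ lies in the middle-thirds Cantor set, its effective Hausdorff dimension as a real is at most $\log_{3} 2 < 1$, so one cannot rely on~$\gamma$ alone; instead one must force the vertical coordinate~$y_{0}$ to be effectively Hausdorff $1$-dimensional. The plan is to exploit the remaining freedom in the construction to arrange this: the $y$-coordinate of the end box evolves by a computable affine update governed by the rationals $l_{s}^{-}, l_{s}^{*}, r_{s}^{*}, r_{s}^{+}$ extracted from the fat approximation of~$P$, and by choosing $P^{*}$ (and hence the sequence of these rationals) so that the resulting left-c.e.\ limit $y_{0}$ is Martin-L\"of random --- for instance by arranging that $y_{0}$ equals Chaitin's~$\Omega$ rescaled into the relevant interval --- one secures $\dim(p^{*}) \geq \dim(y_{0}) = 1$. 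Verifying that this coding respects every sublemma of the proof of Theorem~\ref{thm:special_roid} is the main delicate step; once it is in place, the fan $Q$ is the desired nondegenerate contractible $\Pi^{0}_{1}$ subset of~${\rm DIM}^{[1,2]}$.
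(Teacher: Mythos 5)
There is a genuine gap in the second paragraph of your proposal, and it is exactly the point that the paper's extra machinery (the operator $\iota_f$ and the binary coding $r$) is designed to handle. You claim that $\pi_*$, together with scaling by $1/3$ and reflection, is a computable bi-Lipschitz map and hence that $\pi_*(P^*)$ consists of reals of effective Hausdorff dimension $1$ whenever $P^*$ consists of Martin-L\"of randoms. This is false: $\pi_*$ sends the cylinder $I_\sigma\subseteq 2^{\nn}$ of $2$-adic diameter $2^{-lh(\sigma)}$ onto an interval of length about $3^{-lh(\sigma)}$, so neither $\pi_*$ nor $\pi_*^{-1}$ is Lipschitz. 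In fact $\pi_*(2^\nn)=C\cap[1/3,2/3]$ lies in the middle-thirds Cantor set, and \emph{every} real $x\in C$ satisfies $K(x\upharpoonright n)\leq n\log_3 2 + O(\log n)$ (one ternary digit of $C$ per roughly $\log_2 3$ binary digits), so $\dim(x)\leq\log_3 2<1$ for every $x\in\pi_*(P^*)$, no matter how random the preimage. Hence your conclusion that the non-apex points of $Q$ have dimension at least $1$ fails. Notice that you in fact diagnose this very phenomenon for the apex coordinate $\gamma=\rho(B)\in C$, but do not notice that the same bound $\dim\leq\log_3 2$ applies uniformly to all of $\pi_*(P^*)$ and therefore to the arms of your fan.

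The paper's proof addresses exactly this point by \emph{not} using the middle-thirds image directly. It first ``inflates'' a $\Pi^0_1$ class $R\subseteq 2^\nn$ of Martin-L\"of randoms by the map $\iota_f$, which inserts the prefix bit $\alpha(i)$ in front of the $i$-th block of $\alpha$ and thereby spreads the Kolmogorov complexity so that for sufficiently fast-growing $f$ one has $K(\iota_f(\alpha)\upharpoonright n)\geq n-o(n)$, and then encodes via $r(\alpha)=\sum_i\alpha(i)2^{-(i+1)}$ (the \emph{binary}, not ternary, encoding). It is the combination $r\circ\iota_f$ that is a computable embedding whose image consists of reals of effective Hausdorff dimension exactly $1$; that is the $P$ that is fed into the construction of Theorem~\ref{thm:special_roid}. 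Your proposal also leaves the treatment of the apex as an unsubstantiated ``plan'' to code $\Omega$ into the limiting $y$-coordinate; the paper instead handles the apex by directly choosing the destination $\gamma$ to be (a representation of) Chaitin's $\Omega$, so that one coordinate already has dimension $1$. If you want to repair your argument, you need to replace ``$\pi_*$ is bi-Lipschitz'' by the $\iota_f$-inflation device, or some other mechanism that produces a $\Pi^0_1$ subclass of the reals whose members genuinely have dimension $1$ \emph{as real numbers}, not merely as points of $2^\nn$.
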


\begin{proof}\upshape
For any strictly increasing computable function $f:\nn\to\nn$ with $f(0)=0$ and any infinite binary sequence $\alpha\in 2^\nn$, put $\iota_f(\alpha)=\prod_{i\in\nn}\lrangle{\alpha(i),\alpha(f(i)),\alpha(f(i)+1),\dots,\alpha(f(i+1)-1)}$, where $\sigma\times\tau$ denotes the concatenation of binary strings $\sigma$ and $\tau$.
Then, $r:2^\nn\to\mathbb{R}$ is defined as $r(\alpha)=\sum_{i\in\nn}(\alpha(i)\cdot 2^{-(i+1)})$.
Note that $\alpha\not=\beta$ and $r(\alpha)=r(\beta)$ hold if and only if there is a common initial segment $\sigma\in 2^{<\nn}$ of $\alpha$ and $\beta$ such that $\sigma 0$ and $\sigma 1$ are initial segments of $\alpha$ and $\beta$ respectively, and that $\alpha(m)=1$ and $\beta(m)=0$ for any $m>lh(\sigma)$, where $lh(\sigma)$ denotes the length of $\sigma$.
In this case, we say that $\alpha$ {\em sticks to $\beta$ on $\sigma$}.
If $r(\alpha)\not=r(\beta)$, then clearly $r\circ\iota_f(\alpha)\not=r\circ\iota_f(\beta)$.
Assume that $\alpha$ sticks to $\beta$ on $\sigma$.
Then there are $m_0<m_1$ such that $\iota_f(\alpha)(m_0)=\iota_f(\alpha)(m_1)=\alpha(lh(\sigma))=0$ and $\iota_f(\beta)(m_0)=\iota_f(\beta)(m_1)=\beta(lh(\sigma))=1$ by our definition of $\iota_f$.
Therefore, $\iota_f(\alpha)$ does not stick to $\iota_f(\beta)$.
Hence, $r\circ\iota_f(\alpha)\not=r\circ\iota_f(\beta)$ whenever $\alpha\not=\beta$.
Actually, $r\circ\iota_f:2^\nn\to\mathbb{R}$ is a computable embedding.
For each $n\in\nn$, put $k_f(n)=\#\{s:f(s)<n\}$.
Then, there is a constant $c\in\nn$ such that, for any $\alpha\in 2^\nn$ and $n\in\nn$, we have $K(\iota_f(\alpha)\res n+k_f(n)+1)\geq K(\alpha\res n)-c$, where $K$ denotes the prefix-free Kolmogorov complexity.
Therefore, for any sufficiently fast-growing function $f:\nn\to\nn$ and any Martin-L\"of random sequence $\alpha\in 2^\nn$, the effective Hausdorff dimension of $r\circ\iota_f(\alpha)$ must be $1$.
Thus, for any nonempty $\Pi^0_1$ set $R\subseteq 2^\nn$ consisting of Martin-L\"of random sequences, $\{0\}\times(r\circ\iota_f(R))$ is a $\Pi^0_1$ subset of ${\rm DIM}^{\{1\}}$.
Let $Q$ be the dendroid constructed from $P=r\circ\iota_f(R)$ as in the proof of Theorem \ref{thm:special_roid}, where we choose $\gamma=\rho(B)$ as Chaitin's halting probability $\Omega$.
For every point $\lrangle{x_0,x_1}\in Q$, the effective Hausdorff dimension of $x_i$ for some $i<2$ is equivalent to that of an element of $P$ or that of $\Omega$.
Consequently, $Q\subseteq{\rm DIM}^{[1,2]}$.
\end{proof}

\subsection{Reverse Mathematics}

\begin{theorem}
For every $\Pi^0_1$ set $P\subseteq 2^\nn$, there is a contractible planar $\Pi^0_1$ set $Q$ such that $Q$ is Turing-degree-isomorphic to $P$, i.e., $\{\deg_T(x):x\in P\}=\{\deg_T(x):x\in D\}$.
\end{theorem}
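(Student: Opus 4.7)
I would adapt the fat-approximation construction from the proof of Theorem \ref{thm:special_roid} to an arbitrary $\Pi^0_1$ set $P\subseteq 2^\nn$. Fix a computable tree $T_P\subseteq 2^{<\nn}$ with $P$ equal to the set of infinite paths of $T_P$, and let $\rho$ denote the standard Cantor-set embedding of $2^\nn$ into $[1/3,2/3]$. The construction of Theorem \ref{thm:special_roid} takes $\gamma=\rho(B)$ for a non-computable c.e.\ set $B$, so that the single ramification point $\lrangle{\gamma,y_0}$ has degree $\mathbf{0}'$; to handle a general $P$, I would instead choose $\gamma$ to encode some pre-selected path $\alpha_0\in P$ (for instance, the leftmost path, if it exists; more generally, a path chosen by applying the basis theorem to $P$). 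Since such $\alpha_0$ is only $\Delta^0_2$ in general, the sequence of rational intervals $[\gamma_s^{\min},\gamma_s^{\max}]$ approximating $\gamma$ must now be driven by the $\Pi^0_1$ approximations $T_{P,s}$ of the tree rather than by a c.e.\ enumeration of $B$. With this modification, replay the block construction (straight blocks $Z^{\rm st}_s$, end boxes $Z^{\rm end}_s$, and $(s+1)$-blocks along the active block) exactly as in Theorem \ref{thm:special_roid}, with the same non-injured and injured cases.

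Next, I would verify that $Q=\bigcap_s Q_s$ is $\Pi^0_1$ (immediate from the computability of the fat approximation), planar (embedded in $[0,1]^2$), and contractible. Contractibility follows in the same manner as in the remark after Theorem \ref{thm:special_roid}: $Q$ is a Cantor fan homeomorphic to the one-point compactification of $(0,1]\times P$, so the straight-line homotopy toward the apex contracts $Q$ to a point.

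Finally, I would establish the Turing-degree isomorphism. The distinguished subset $D\subseteq Q$ should be taken to be the set of endpoints (``tips'') of the fan $Q$, together with its apex. By the same argument as in Sublemmata 16--18 of the proof of Theorem \ref{thm:special_roid}, the fat-approximation induces a computable homeomorphism $\bigcup_{Z\in\mathcal{Z}^*}\bigcap_t Z_t\cong(0,1]\times P$, which restricts to a computable bijection between $D\setminus\{\mathrm{apex}\}$ and $P\times\{1\}$ (hence $P$). Since the apex, by our choice of $\gamma$, has degree $\deg_T(\alpha_0)\in\{\deg_T(\alpha):\alpha\in P\}$, we obtain $\{\deg_T(x):x\in D\}=\{\deg_T(\alpha):\alpha\in P\}$ as required.

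The main obstacle will be step one: modifying the injury mechanism of Theorem \ref{thm:special_roid} to accommodate a $\Delta^0_2$ rather than c.e.\ choice of $\gamma$, and verifying that the block-chaining still converges to a contractible planar dendroid. In the original construction, the fact that $\gamma^{\min}_s$ and $\gamma^{\max}_s$ are both determined by enumerating $B$ (hence change only by ``shifting'' in controlled c.e.\ ways) drives the injury bookkeeping; for a general $\Delta^0_2$ target $\gamma$, the approximations may oscillate, so I must instead use an interval-shrinking approximation obtained from a basis-theorem selection of $\alpha_0$, and check that the extension and injury sublemmata carry through with this replacement.
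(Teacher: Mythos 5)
The paper's proof is considerably simpler than your proposal and avoids the very obstacle you identify as the ``main obstacle.'' It relies on the classical fact that the leftmost path $\alpha_0$ of a nonempty $\Pi^0_1$ class $P\subseteq 2^\nn$ has \emph{c.e.\ Turing degree}: writing $\beta_s$ for the leftmost path of the stage-$s$ approximation tree $T_{P,s}$, the sequence $\beta_s$ is uniformly computable, increases lexicographically (since the trees shrink), and converges to $\alpha_0$, so $B=\{\sigma\in 2^{<\nn}:\exists s\,(\sigma\text{ lies strictly to the left of }\beta_s\res lh(\sigma))\}$ is c.e.\ and $B\equiv_T\alpha_0$. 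The paper therefore picks such a c.e.\ set $B$ and runs the construction of Theorem~\ref{thm:special_roid} with this $P$ and this $B$ \emph{completely unchanged}; no modification of the injury mechanism is required. You mention the leftmost path as a candidate for $\gamma$ but then assert it is ``only $\Delta^0_2$ in general'' and proceed to re-engineer the injury bookkeeping for an arbitrary $\Delta^0_2$ target. That creates a genuine gap: the injured case of Theorem~\ref{thm:special_roid} depends on the one-sided, c.e.\ nature of the approximations $\gamma_s^{\min},\gamma_s^{\max}$ (a new enumeration into $B$ shifts or shrinks the interval in a controlled, eventually-stabilizing way), and your sketch does not verify that the nesting $Q_{s+1}\subseteq Q_s$ or the extension Sublemmata~\ref{sublem:6} and~\ref{sublem:15} survive a freely oscillating $\Delta^0_2$ approximation. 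The correct move is not to redo that machinery but to observe that a c.e.\ set of the required degree already exists.

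A secondary point: you take the distinguished set $D$ to be only the tips of the fan together with the apex, whereas the theorem (modulo the evident typo writing $D$ for $Q$) asserts that $Q$ itself is Turing-degree-isomorphic to $P$; the verification should therefore address the degree of an arbitrary point of $Q$, not just of a distinguished subset.
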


\begin{proof}\upshape
We choose $B$ as a c.e.\ set of the same degree with the leftmost path of $P$.
Then, the dendroid $Q$ constructed from $P$ and $B$ as in the proof of Theorem \ref{thm:special_roid} is the desired one.
\end{proof}

A compact $\Pi^0_1$ subset $P$ of a computable topological space is {\em Muchnik complete} if every element of $P$ computes the set of all theorems of $T$ for some consistent complete theory $T$ containing Peano arithmetic.
By Scott Basis Theorem (see Simpson \cite{Sim}), $P$ is Muchnik complete if and only if $P$ is nonempty and every element of $P$ computes an element of any nonempty $\Pi^0_1$ set $Q\subseteq 2^\nn$.

\begin{cor}
There is a Muchnik complete contractible planar $\Pi^0_1$ set.
\end{cor}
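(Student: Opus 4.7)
The plan is to combine the preceding Turing-degree-isomorphism theorem with a standard Muchnik complete $\Pi^0_1$ subset of Cantor space. Let $\mathrm{PA}\subseteq 2^\nn$ denote the set of all complete consistent extensions of Peano Arithmetic, coded as infinite binary sequences. It is a classical fact (see Simpson \cite{Sim}) that $\mathrm{PA}$ is a nonempty $\Pi^0_1$ subclass of $2^\nn$ and that every element of $\mathrm{PA}$ computes an element of every nonempty $\Pi^0_1$ subclass of $2^\nn$; hence $\mathrm{PA}$ is Muchnik complete in the sense defined just above the corollary.

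I would then apply the previous theorem to $P = \mathrm{PA}$ to obtain a contractible planar $\Pi^0_1$ set $Q$ satisfying $\{\deg_T(x):x\in Q\}=\{\deg_T(x):x\in \mathrm{PA}\}$. Given any point $q\in Q$, there is some $p\in \mathrm{PA}$ with $\deg_T(q)=\deg_T(p)$; since $p$ is Muchnik complete, so is $q$ (Muchnik completeness is a degree-theoretic property). Therefore every element of $Q$ computes the set of theorems of some consistent complete extension of Peano Arithmetic, which is exactly the definition of Muchnik completeness for $Q$. Together with the contractibility and planarity from the previous theorem, this gives the desired conclusion.

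There is essentially no obstacle here: the entire content of the corollary is packaged into the preceding theorem, and the only additional observation needed is the elementary fact that Muchnik completeness depends only on the set of Turing degrees realized in the class, so that a degree-isomorphic copy of a Muchnik complete $\Pi^0_1$ set is again Muchnik complete. The only small care point is to confirm that the coding of complete consistent extensions of PA used in the definition matches the representation for which $\mathrm{PA}\subseteq 2^\nn$ is $\Pi^0_1$, but this is standard.
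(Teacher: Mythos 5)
Your proof is correct and is exactly the intended argument: the paper states the corollary without proof, and the natural derivation is to take $P$ to be the set of complete consistent extensions of PA in the preceding Turing-degree-isomorphism theorem, then observe that Muchnik completeness transfers across degree-isomorphism.
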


A compact $\Pi^0_1$ subset $P$ of a computable topological space is {\em Medvedev complete} (see also Simpson \cite{Sim}) if there is a uniform computable procedure $\Phi$ such that, for any name $x\in\nn^\nn$ of an element of $P$, $\Phi(x)$ is the set of all theorems of $T$ for some consistent complete theory $T$ containing Peano arithmetic.

\begin{question}
Does there exist a Medvedev complete simply connected planar $\Pi^0_1$ set?
Does there exist a Medvedev complete contractible Euclidean $\Pi^0_1$ set?
\end{question}

Our Theorem \ref{thm:special_roid} also provides a reverse mathematical consequence.
For basic notation for Reverse Mathematics, see Simpson \cite{SimRM}.
Let ${\sf RCA}_0$ denote the subsystem of second order arithmetic consisting of $I\Sigma^0_1$ (Robinson arithmetic with induction for $\Sigma^0_1$ formulas) and $\Delta^0_1$-${\sf CA}$ (comprehension for $\Delta^0_1$ formulas).
Over ${\sf RCA}_0$, we say that a sequence $(B_i)_{i\in\nn}$ of open rational balls is {\em flat} if there is a homeomorphism between $\bigcup_{i<n}B_i$ and the open square $(0,1)^2$ for any $n\in\nn$.
It is easy to see that ${\sf RCA}_0$ proves that every flat cover of $[0,1]$ has a finite subcover.

\begin{theorem}
The following are equivalent over ${\sf RCA}_0$.
\begin{enumerate}
\item Weak K\"onig's Lemma: every infinite binary tree has an infinite path.
\item Every open cover of $[0,1]$ has a finite subcover.
\item Every flat open cover of $[0,1]^2$ has a finite subcover.
\end{enumerate}
\end{theorem}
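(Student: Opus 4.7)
The plan is to establish the cycle $(1) \Leftrightarrow (2) \Rightarrow (3) \Rightarrow (1)$. The equivalence $(1) \Leftrightarrow (2)$ is the classical Friedman--Simpson theorem (Simpson \cite{SimRM}, Theorem IV.1.2), asserting that Weak K\"onig's Lemma is equivalent over ${\sf RCA}_0$ to the Heine--Borel covering property for $[0,1]$.

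For $(2) \Rightarrow (3)$, I would use that $(2)$ is equivalent to ${\sf WKL}$, and ${\sf WKL}$ implies Heine--Borel compactness of $[0,1]^n$ for every fixed $n$. Since a flat open cover of $[0,1]^2$ is in particular an open cover, $(3)$ follows immediately, and the flatness hypothesis is not used in this direction.

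The main work is in $(3) \Rightarrow (1)$, which I would prove by contraposition, mimicking the classical Heine--Borel--to--${\sf WKL}$ argument in two dimensions. Assume ${\sf WKL}$ fails and fix an infinite computable tree $T \subseteq 2^{<\nn}$ with no infinite path. For each $\sigma \notin T$ define
\[
R_\sigma = \bigl( 0.\sigma - 3^{-|\sigma|-1},\, 0.\sigma + 2^{-|\sigma|} + 3^{-|\sigma|-1} \bigr) \times (-1,\, 2),
\]
where $0.\sigma$ denotes the dyadic rational coded by $\sigma$. Since $[T] = \emptyset$ in the ambient model, the family $\{R_\sigma : \sigma \notin T\}$ covers $[0,1]^2$: for any $x \in [0,1]^2$, a binary expansion of its first coordinate is not a path through $T$, so some initial segment lies outside $T$, and the corresponding rectangle contains $x$. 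Moreover, a finite subcover $R_{\sigma_0},\ldots,R_{\sigma_m}$ can be thinned (by discarding any $\sigma_j$ extending another $\sigma_i$) to a finite antichain whose union still covers $[0,1]^2$; this antichain is complete in the sense that every $\alpha \in 2^\nn$ has some $\sigma_i$ as prefix, so every $\tau \in 2^{<\nn}$ of length at least $\max_i |\sigma_i|$ extends some $\sigma_i$. By downward closure of $T$ combined with $\sigma_i \notin T$, no extension of any $\sigma_i$ lies in $T$; hence $T$ is contained in the finite set of strings of length $< \max_i|\sigma_i|$ not extending any $\sigma_i$, contradicting its infiniteness. To fit the flat hypothesis of $(3)$, I would replace the rectangles by a computable sequence $(B_n)_{n \in \nn}$ of rational open balls whose union equals $\bigcup_{\sigma \notin T} R_\sigma$, enumerated so that each new ball overlaps the previously-enumerated union along a single open arc; this yields a flat sequence, and reduction from a finite $B_n$-subcover to a finite $R_\sigma$-subcover is routine.

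The main obstacle I anticipate is the flat ball-decomposition itself. Each rectangle $R_\sigma$ must be written as a finite union of rational balls, and these balls must be interleaved across strings so that every initial union remains homeomorphic to $(0,1)^2$. The enlargement $3^{-|\sigma|-1}$ guarantees that lexicographically adjacent rectangles at the same level overlap, and that each child rectangle $R_{\sigma i}$ sits properly inside its parent $R_\sigma$; thus a breadth-first, left-to-right enumeration should preserve simple connectivity at every step, but checking this rigorously demands the kind of careful handle-attachment bookkeeping standard in planar topology rather than an appeal to routine compactness.
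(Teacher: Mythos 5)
Your approach is genuinely different from the paper's: the paper obtains $(3)\Rightarrow(1)$ by formalizing the dendroid $Q=\bigcap_sQ_s$ of Theorem \ref{thm:special_roid} inside ${\sf RCA}_0$, using that every stage-$t$ approximation $Q_t$ is nonempty (it contains the end box), so the complement yields a flat cover with no finite subcover; you instead build the cover directly from vertical strips indexed by the strings $\sigma\notin T$, which would be a real simplification, bypassing the Cantor-fan-along-an-arc machinery entirely.

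There is, however, a genuine gap: the claim that any antichain $\{\sigma_i\}_{i\leq m}$ with $\bigcup_iR_{\sigma_i}\supseteq[0,1]^2$ is \emph{complete} (every $\alpha\in 2^\nn$ extends some $\sigma_i$) is false for the dyadic encoding. The core intervals $[0.\sigma,\,0.\sigma+2^{-|\sigma|}]$ tile $[0,1]$ with zero gap, so the enlargement $3^{-|\sigma|-1}$ really does cross into a neighbouring string's territory, and a long missing string can be absorbed by the enlargements of shorter incomparable strings. Concretely, the antichain $S=\{\lrangle{0},\,1001,\,101,\,11\}$ omits the cylinder $I_{1000}$, yet the $x$-projections of $R_{\lrangle{0}},R_{1001},R_{101},R_{11}$, namely
\[
\bigl(-3^{-2},\,2^{-1}+3^{-2}\bigr),\quad
\bigl(9\cdot 2^{-4}-3^{-5},\,10\cdot 2^{-4}+3^{-5}\bigr),\quad
\bigl(5\cdot 2^{-3}-3^{-4},\,6\cdot 2^{-3}+3^{-4}\bigr),\quad
\bigl(3\cdot 2^{-2}-3^{-3},\,1+3^{-3}\bigr),
\]
chain across $[0,1]$, so $\bigcup_{\sigma\in S}R_\sigma\supseteq[0,1]^2$. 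Now take $T=\{\lrangle{},1,10,100\}\cup\{1000\fr\rho:\rho\in T'\}$ with $T'$ an infinite pathless tree (available in any $\neg{\sf WKL}$ model): $T$ is infinite and pathless, $S\subseteq 2^{<\nn}\setminus T$, and your cover admits the finite subcover $S$, so the contraposition breaks. The repair is the one in the classical proof of $(2)\Leftrightarrow(1)$ (Simpson \cite{SimRM}, IV.1): index by the middle-third Cantor encoding $a_\sigma=\sum_i 2\sigma(i)3^{-i-1}$, so that level-$n$ intervals are separated by gaps of width $3^{-n}$, which a $3^{-n-1}$-enlargement on each side cannot bridge; then a finite subcover of enlarged Cantor intervals covering $C$ does force the cylinders to cover $2^\nn$, and one throws in the open middle-third strips (which never meet $C$) to cover all of $[0,1]$. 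Your second, explicitly-flagged obstacle --- the computable flat decomposition into balls --- remains, but it is bookkeeping rather than a logical leak; the missing antichain-completeness lemma is the more fundamental issue.
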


\begin{proof}\upshape
The equivalence of the item (1) and (2) is well-known.
It is not hard to see that ${\sf RCA}_0$ proves the existence of the sequence $\{Q_s\}_{s\in\nn}$ as in our construction of the dendroid $Q$ in Theorem \ref{thm:special_roid}, by formalizing our proof in Theorem \ref{thm:special_roid} in ${\sf RCA}_0$.
Here we may assume that $\{Q_s\}_{s\in\nn}$ is constructed from the set of all infinite paths of a given infinite binary tree $T\subseteq 2^{<\nn}$, and a c.e.\ complete set $B\subseteq\nn$.
Note that $\bigcup_{s<t}([0,1]^2\setminus Q_s)$ does not cover $[0,1]^2$ for every $t\in\nn$.
Over ${\sf RCA}_0$, there is a flat sequence $\{[0,1]^2\setminus Q^*_s\}_{s\in\nn}$ of open rational balls such that $\bigcap_{s<t}Q^*_s\supseteq\bigcap_{s<t}Q_s$ for any $t\in\nn$, and that an open rational ball $U$ is removed from some $Q^*_s$ if and only if an open rational ball $U$ is removed from some $Q_u$.
However, if $T$ has no infinite path, then $Q$ has no element.
In other words, $\{[0,1]^2\setminus Q^*_s\}_{s\in\nn}$ covers $[0,1]^2$.
\end{proof}

\begin{ack}\upshape
The author thank Douglas Cenzer, Kojiro Higuchi and Sam Sandars for valuable comments and helpful discussion.
\end{ack}

\end{document}